\definecolor{refkeybis}{gray}{.65}
\definecolor{labelkeybis}{gray}{.65}
{\makeatletter
\def\SK@refcolor{\color{refkeybis}}%
\def\SK@labelcolor{\color{labelkeybis}}}
\newtheorem{theorem}{Theorem}[section]
\newtheorem{lemma}[theorem]{Lemma}
\newtheorem{definition}[theorem]{Definition}
\newtheorem{remark}[theorem]{Remark}
\newtheorem{proposition}[theorem]{Proposition}
\newtheorem{example}[theorem]{Example}
\newcommand{\Q}{\mathbb{Q}}
\newcommand{\N}{\mathbb{N}}
\newcommand{\R}{\mathbb{R}}
\newcommand{\C}{\mathbb{C}}
\newcommand{\Z}{\mathbb{Z}}
\newcommand{\Leb}[1]{{\mathscr L}^{#1}} 
\renewcommand{\P}{\mathbb{P}}
\newcommand{\<}{\langle}
\renewcommand{\>}{\rangle}
\newcommand{\e}{\varepsilon}
\newcommand{\g}{\gamma}
\newcommand{\n}{\nabla}
\newcommand{\z}{\zeta}
\newcommand{\p}{\partial}
\newcommand{\ov}{\overline}
\newcommand{\BorelSets}[1]{\mathcal B(#1)}
\newcommand{\Probabilities}[1]{\mathscr P\bigl(#1\bigr)} 
\newcommand{\Measures}[1]{\mathscr M\bigl(#1\bigr)} 
\newcommand{\Measuresp}[1]{\mathscr M_+\bigl(#1\bigr)} 
\newcommand{\Path}[2]{\Omega_{#1}({\mathscr P}(#2))}
\newcommand{\Error}[2]{{\mathscr E}_\varepsilon'(#1,#2)}
\newcommand{\Errordelta}[2]{{\mathscr E}_\varepsilon(#1,#2)}
\newcommand{\supp}{\operatorname{supp}}
\newcommand{\E}{{\mathbb E}}
\newcommand{\res}{\mathop{\hbox{\vrule height 7pt width .5pt depth 0pt
\vrule height .5pt width 6pt depth 0pt}}\nolimits}
 \newcommand{\bb}{{\mbox{\boldmath$b$}}}
 \newcommand{\cc}{{\mbox{\boldmath$c$}}}
 \newcommand{\spp}{{\mbox{\scriptsize\boldmath$p$}}}
 \newcommand{\xx}{{\mbox{\boldmath$x$}}}
 \newcommand{\pp}{{\mbox{\boldmath$p$}}}
 \newcommand{\tauV}{{\kern-3pt\tau}}
 \newcommand{\sxx}{{\mbox{\scriptsize\boldmath$x$}}}
 \newcommand{\sxX}{{\mbox{\scriptsize\boldmath$X$}}}
 \newcommand{\XX}{{\mbox{\boldmath$X$}}}
 \newcommand{\YY}{{\mbox{\boldmath$Y$}}}
 \newcommand{\oVVVk}{\overline{\mbox{\boldmath$V$}}\kern-3pt}
 \newcommand{\tVVVk}{\tilde{\mbox{\boldmath$V$}}\kern-3pt}
 \newcommand{\mmu}{{\mbox{\boldmath$\mu$}}}
 \newcommand{\nnu}{{\mbox{\boldmath$\nu$}}}
 \newcommand{\smmu}{{\mbox{\scriptsize\boldmath$\mu$}}}
 \newcommand{\ssigma}{{\mbox{\boldmath$\sigma$}}}
 \newcommand{\eeta}{{\mbox{\boldmath$\eta$}}}
 \newcommand{\eps}{\varepsilon}
\newcommand{\esssup}{\mathop{\rm ess \, sup}}
\title{
 Semiclassical limit of quantum dynamics with rough potentials and
 well posedness of transport equations with measure initial data
}
\date{}
\author{Luigi Ambrosio\
   \thanks{\textsf{l.ambrosio@sns.it}}
   \and
   Alessio Figalli\
   \thanks{\textsf{figalli@math.utexas.edu}}
 \and
   Gero Friesecke\
   \thanks{\textsf{gf@ma.tum.de}}
 \and
   Johannes Giannoulis\
   \thanks{\textsf{giannoul@ma.tum.de}}
\and
   Thierry Paul\
   \thanks{\textsf{thierry.paul@math.polytechnique.fr}}
   }
\begin{document}

\maketitle

\tableofcontents

\section{Introduction}

In this paper we study the semiclassical limit of the Schr\"odinger
equation. Under mild regularity assumptions on the potential $U$
which include Born-Oppenheimer potential energy surfaces in
molecular dynamics, we establish asymptotic validity of classical
dynamics globally in space and time for ``almost all'' initial data,
with respect to an appropriate reference measure on the space of
initial data. In order to achieve this goal we study the flow in the
space of measures induced by the continuity equation: we prove
existence, uniqueness and stability properties of the flow in this
infinite-dimensional space, in the same spirit of the theory
developed in the case when the state space is Euclidean, starting
from the seminal paper \cite{lions} (see also \cite{ambrosio} and
the Lecture Notes \cite{cetraro}, \cite{bologna}).

As we said, we are concerned with the derivation of classical
mechanics from quantum mechanics, corresponding to the study of the
asymptotic behaviour of solutions $\psi^\e(t,x)=\psi^\e_t(x)$ to the
Schr\"odinger equation
\begin{equation}
\label{eq:Sch} \left\{
\begin{array}{l} i\e \p_t \psi^\e_t=-\frac{\e^2}{2}\Delta \psi^\e_t+U\psi^\e_t
=H_\e\psi^\e_t,\\\\
\psi^\e_0=\psi_{0,\e},
\end{array}
\right.
\end{equation}
as $\e\to 0$. This problem has a long history (see e.g.
\cite{Martinez}) and has been considered from a transport equation
point of view in \cite{lionspaul} and \cite{gerard} and more
recently in \cite{amfrja}, in the context of molecular dynamics. In
that context the standing assumptions on the initial conditions
$\psi_{0,\e}\in H^2(\R^n;\C)$ are:
\begin{equation}\label{initial1}
\int_{\R^n}|\psi_{0,\e}|^2\,dx=1,
\end{equation}
\begin{equation}\label{initial2}
\sup_{\e>0}\int_{\R^n}|H_\e\psi_{0,\e}|^2\,dx<\infty.
\end{equation}
The potential $U$ in \eqref{eq:Sch} is assumed to satisfy the
standard Kato conditions $U=U_{b}+U_{s}$ with
\begin{equation}\label{Kato2}
U_{s}(x)=\sum_{1\leq\alpha<\beta\leq M}
V_{\alpha\beta}(x_\alpha-x_\beta),\qquad V_{\alpha\beta}\in
L^2(\R^3)+L^\infty(\R^3)
\end{equation}
and
\begin{equation}\label{Kato1}
     U_{b}\in L^\infty(\R^n),
\end{equation}
\begin{equation}\label{Kato3}
     \nabla U_{b}\in L^\infty(\R^n;\R^n).
\end{equation}
Here $n=3M$, $x=(x_1,\ldots,x_M)\in (\R^3)^M$ represent the
positions of atomic nuclei. Under assumptions \eqref{Kato2},
\eqref{Kato1} the operator $H_\e$ is selfadjoint on $L^2(\R^n;\C)$
with domain $H^2(\R^n;\C)$ and generates a unitary group in
$L^2(\R^n;\C)$; hence $\int_{\R^n}|\psi^\e_t|^2\,dx=1$ for all
$t\in\R$, $t\mapsto\psi^\e_t$ is continuous with values in
$H^2(\R^n;\C)$ and continuously differentiable with values in
$L^2(\R^n;\C)$. Prototypically, $U$ is the Born-Oppenheimer ground
state potential energy surface of the molecule, that is to say
$V_{\alpha\beta}(x_\alpha-x_\beta)=Z_\alpha
Z_\beta|x_\alpha-x_\beta|^{-1}$, $Z_\alpha,\,Z_\beta\in\N$,
$U_b(x)=\inf \mbox{spec}\, H_{e\ell}(x)$, where
$$
   H_{e\ell}(x)=\sum_{i=1}^N(-\frac12\Delta_{r_i}-\sum_{\alpha=1}^MZ_\alpha
   |r_i-x_\alpha|^{-1})+ \sum_{1\le i<j\le N} |r_i-r_j|^{-1}
$$
is the electronic Hamiltonian acting on the antisymmetric subspace
of $L^2((\R^3\times\Z_2)^N;\C)$ and the $r_i\in\R^3$ are electronic
position coordinates. For neutral or positively charged molecules
($N\le \sum_{\alpha=1}^M Z_\alpha$), Zhislin's theorem (see
\cite{Fr03} for a short proof) states that for all $x$, $U_b(x)$ is
an isolated eigenvalue of finite multiplicity of $H_{e\ell}(x)$.

In the study of this semiclassical limit difficulties arise on the
one hand from the fact that $\nabla U$ is unbounded (because of
Coulomb singularities) and on the other hand from the fact that
$\nabla U$ might be discontinuous even out of Coulomb singularities
(because of possible eigenvalue crossings of the electronic
Hamiltonian $H_{e\ell}$). Fortunately, it turns out that these two
difficulties can be dealt with separately.

If we denote by $\bb:\R^{2n}\to\R^{2n}$ the autonomous
divergence-free vector field $\bb(x,p):=\bigl(p,-\nabla U(x)\bigr)$,
the Liouville equation describing classical dynamics is
\begin{equation}\label{contieqliou}
\partial_t \mu_t+p\cdot\nabla_x \mu_t-\nabla U(x)\cdot\nabla_p \mu_t=0.
\end{equation}
If we denote by $W_\e:L^2(\R^n;\C)\to L^\infty(\R^n_x\times\R^n_p)$
the Wigner transform, namely
\begin{equation}\label{Wigner}
W_\e\psi(x,p):=\frac{1}{(2\pi)^n}\int_{\R^n}\psi(x+\frac{\e}{2}y)
\overline{\psi(x-\frac{\e}{2}y)}e^{-ipy}dy,
\end{equation}
a calculation going back to Wigner himself (see for instance
\cite{lionspaul} or \cite{amfrja} for a detailed derivation) shows
that $W_\e{\psi_t^\e}$ solves in the sense of distributions the
equation
\begin{equation}\label{Wigner_calc_in}
\partial_t W_\e\psi^\e_t+p\cdot\nabla_x W_\e\psi^\e_t
=\Errordelta{U}{\psi^\eps_t},
\end{equation}
where $\Errordelta{U}{\psi}(x,p)$ is given by
\begin{equation}\label{error_delta}
\Errordelta{U}{\psi}(x,p):= -\frac{i}{(2\pi)^n}\int_{\R^n} \biggl[
\frac{U(x+\tfrac{\e}{2}y)-U(x-\tfrac{\e}{2}y)}{\e}\biggr]
\psi(x+\frac{\e}{2}y) \overline{\psi(x-\frac{\e}{2}y)}e^{-ipy}dy.
\end{equation}
Adding and subtracting $\nabla U(x)\cdot y$ in the term in square
brackets and using $ye^{-ip\cdot y}=i\nabla_p e^{-ip\cdot y}$, an
integration by parts gives $\Errordelta{U}{\psi}=\nabla U(x)\cdot
\nabla_pW_\e\psi+\Error{U}{\psi}$, where $\Error{U}{\psi}(x,p)$ is
given by
\begin{equation}\label{error}
\Error{U}{\psi}(x,p):= -\frac{i}{(2\pi)^n}\int_{\R^n} \biggl[
\frac{U(x+\tfrac{\e}{2}y)-U(x-\tfrac{\e}{2}y)}{\e}-\langle\nabla
U(x),y\rangle \biggr] \psi(x+\frac{\e}{2}y)
\overline{\psi(x-\frac{\e}{2}y)}e^{-ipy}dy.
\end{equation}
Hence, $W_\e\psi^\e_t$ solves \eqref{contieqliou} with an error
term:
\begin{equation}\label{Wigner_calc}
\partial_t W_\e\psi^\e_t+\nabla_{x,p}\cdot \bigl(\bb
W_\e\psi^\e_t\bigr)=\Error{U}{\psi^\eps_t}.
\end{equation}
Heuristically, since the term in square brackets in \eqref{error}
tends to $0$ when $U$ is differentiable, this suggests that the
limit of $W_\e{\psi^\e_t}$ should satisfy \eqref{contieqliou}, and a
first rigorous proof of this fact was given in \cite{lionspaul} and
\cite{gerard} (see also \cite{gerard3}): basically, ignoring other global conditions on $U$,
these results state that:

\noindent (a) $C^1$ regularity of $U$ ensures that limit points of
$W_\e\psi^\e_t$ as $\e\downarrow 0$ exist and satisfy
\eqref{contieqliou};

\noindent (b) $C^2$ regularity of $U$ ensures uniqueness of the
limit, i.e. full convergence as $\e\to 0$.

\noindent In (a), convergence of the Wigner transforms is understood
in a natural dual space ${\cal A}'$ (see \eqref{defA} for the
definition of ${\cal A}$).

In \cite{amfrja} we were able to achieve the existence of limit
points even when Coulomb singularities and crossings are present,
namely assuming only that $U_b$ satisfies (\ref{Kato1}),
(\ref{Kato3}), and (a) when Coulomb singularities but no crossings
are present, namely assuming that $U_{b}\in C^1$. If one wishes to
improve (a) and (b), trying to prove a full convergence result as
$\e\downarrow 0$ under weaker regularity assumptions on $\bb$ (say
$\nabla U\in W^{1,p}$ or $\nabla U\in BV$ out of Coulomb
singularities), one faces the difficulty that the continuity
equation \eqref{contieqliou} is well posed only in good functional
spaces like $L^\infty_+\bigl([0,T];L^1\cap L^\infty(\R^d)\bigr)$
(see \cite{lions}, \cite{ambrosio}, \cite{bouchut}). On the other
hand, in the study of semiclassical limits it is natural to consider
families of wavefunctions $\psi_{0,\e}$ in \eqref{eq:Sch} whose
Wigner transforms do concentrate as $\e\downarrow 0$, for instance
the semiclassical wave packets
\begin{equation}\label{alphaless1}
\psi_{0,\e}(x)=\e^{-n\alpha/2}\phi_0\Bigl(\frac{x-x_0}{\e^\alpha}\Bigr)e^{i(x\cdot
p_0)/\e}\qquad \phi_0\in C^2_c(\R^n),\,\,0<\alpha<1
\end{equation}
which satisfy $\lim_\e W_\e\psi_{0,\e} =\Vert\phi_0\Vert_{L^2}^2
\delta_{(x_0,p_0)}$. Here the limiting case $\alpha=1$ corresponds
to concentration in position only, $\lim_\e
W_\e\psi_{0,\e}=\delta_{x_0}\times(2\pi)^{-n} |{\cal
F}\phi_0|^2(\cdot-p_0)\Leb{n}$, and the case $\alpha=0$ yields
concentration in momentum only, $\lim_\e
W_\e\psi_{0,\e}=|\phi_0(\cdot-x_0)|^2\times\delta_{p_0}$. Here and
below, $({\cal F}\phi_0)(p)=\int_{\R^n}e^{-ip\cdot x}\phi_0(x)\, dx$
denotes the (standard, not scaled) Fourier transform. But, even in
these cases there is a considerable difficulty in the analysis of
\eqref{error_delta}, since the difference quotients of $U$ have a
limit only at $\Leb{n}$-a.e. point.

For these initial conditions there is presumably no hope to achieve
full convergence as $\e\to 0$ for \emph{all} $(x_0,p_0)$, since the
limit problem is not well posed. However, in the same spirit of the
theory of flows that we shall illustrate in the second part of the
introduction, one may look at the family of solutions, indexed in
the case of the initial conditions \eqref{alphaless1} by
$(x_0,p_0)$, as a whole. More generally, we are considering a family
of solutions $\psi^\e_{t,w}$ to \eqref{eq:Sch} indexed by a
``random'' parameter $w\in W$ running in a probability space
$(W,{\mathcal F},\P)$, and achieve convergence ``with probability
one'', using the theory developed in the first part of the paper,
under the no-concentration in mean assumptions
\begin{equation}\label{nomeancon}
\sup_{\e>0}\sup_{t\in\R}\biggl\Vert\int_W W_\e\psi^\e_{t,w}\ast
G^{(2n)}_\e\,d\P(w)\biggr\Vert_{L^\infty(\R^{2n})}<\infty,
\end{equation}
\begin{equation}\label{nomeanconbis}
\sup_{\e>0}\sup_{t\in\R}\biggl\Vert\int_W |\psi^\e_{t,w}\ast
G^{(2n)}_{\lambda\e^2}|^2\,d\P(w)\biggr\Vert_{L^\infty(\R^{n})}\leq
C(\lambda)<\infty\qquad\forall\lambda>0.
\end{equation}
Here $G^{(2n)}_\e$ is the Gaussian kernel in $\R^{2n}$ with variance
$\e/2$. Under these assumptions and those on $U$ given in
Section~7.2, our full convergence result reads as follows:
\begin{equation}\label{limeps}
\lim_{\e\downarrow 0} \int_W\sup_{t\in [-T,T]} d_{{\cal
A}'}\bigl(W_\e\psi^\e_{t,w},\mmu(t,\mu_w)\bigr)\,d\P(w)=0
\qquad\forall\, T>0
\end{equation}
(here $d_{{\cal A}'}$ is any bounded distance inducing the weak$^*$
topology in the unit ball of ${\cal A'}$) where $\mmu(t,\mu_w)$ is
the flow in the space of probability measures at time $t$ starting
from $\mu_w$, and $\mu_w=\lim_\e W_\e\psi^\e_{0,w}$ depends only on
the initial conditions. For instance, in the case of the initial
conditions \eqref{alphaless1} with $\|\phi_0\|_2=1$, indexed by
$w=(x_0,p_0)$, $\mu_w=\delta_w$ and
$\mmu(t,\mu_w)=\delta_{\sxX(t,w)}$, where $\XX(t,w)$ is the flow in
$\R^{2n}$ induced by $(p,-\nabla U)$. So, we may say that the flow
of Wigner measures, thought of as elements of ${\cal A}'$, induced
by the Schr\"odinger equation converges as $\e\to 0$ to the flow in
$\Probabilities{\R^{2n}}\subset {\cal A}'$ induced by the Liouville
equation, provided the initial conditions ensure \eqref{nomeancon}
and \eqref{nomeanconbis}.

Of course one can question about the conditions \eqref{nomeancon}
and \eqref{nomeanconbis}; we show that both are implied by the
uniform operator inequality (here $\rho^\psi$ is the orthogonal
projection on $\psi$)
$$
\frac{1}{\e^n}\int_W \rho^{\psi^\e_{t,w}}\,d\P(w)\leq C{\rm Id}
\qquad\text{with $C$ independent of $t,\,\e$.}
$$
In turn, this latter property is propagated in time (i.e. if the
inequality holds at $t=0$ it holds for all times), and it has a
natural quantum mechanical interpretation. In addition, the uniform
operator inequality is fulfilled by the classical family of initial
data \eqref{alphaless1} when $\P$ is a bounded probability density
on $\R^{2n}$. These results indicate also that the no-concentration
in mean conditions are not only technically convenient, but somehow
natural.

An alternative approach to the flow viewpoint advocated here for
validating classical dynamics (\ref{contieqliou}) from quantum
dynamics (\ref{eq:Sch}) would be to work with deterministic initial
data, but restrict them to those giving rise to suitable bounds, in
mean, on the projection operators $\rho^{\psi_{0,\e}}$. The problem
of finding sufficient conditions to ensure these uniform bounds is
studied in \cite{figpaul}. Another related research direction is a
finer analysis of the behaviour of solutions, in the spirit of
\cite{Fermanian1}, \cite{Fermanian2}. However, this analyis is
presently possible only for very particular cases of eigenvalue crossings.

It is likely that our results can be applied to many more families
of initial conditions, but this is not the goal of this paper. The
proof of \eqref{limeps} relies on several apriori and fine estimates
and on the theoretical tools described in the second part of the
introduction and announced in \cite{amfiga2}. In particular we apply
the stability properties of the $\nnu$-RLF in
$\Probabilities{\R^{2n}}$, see Theorem~\ref{tstable}, to the Husimi
transforms of $\psi^\e_{t,w}$, namely $W_\e\psi^\e_{t,w}\ast
G^{(2n)}_\e$. Indeed, $w^*$-convergence in ${\cal A}'$ of the Wigner
transforms is equivalent, under extra tightness assumptions, to weak
convergence in $\Probabilities{\R^{2n}}$ of the Husimi transforms.

We leave aside further extensions analogous to those considered in
\cite{lionspaul}, namely the convergence of density matrices
$\rho^\e$, whose dynamics is described by $i
\e\partial_t\rho^\e=[H_\e,\rho^\e]$, and the nonlinear case when
$U=U_0\ast\mu$, $\mu$ being the position density of $\psi$ (i.e.
$|\psi|^2$). In connection with the first extension, notice that the
action of the Wigner/Husimi transforms becomes linear, when seen at
this level.

Let us now describe the ``flow'' viewpoint first in
finite-dimensional spaces, where by now the theory is well
understood. Denoting by $\bb_t:\R^d\to\R^d$, $t\in [0,T]$, the
possibly time-dependent velocity field, the first basic idea is not
to look for pointwise uniqueness statements, but rather to the
family of solutions to the ODE as a whole. This leads to the concept
of flow map $\XX(t,x)$ associated to $\bb$, i.e. a map satisfying
$\XX(0,x)=x$ and $\XX(t,x)=\g(t)$, where $\g(0)=x$ and
\begin{equation}\label{ODE}
\dot \g(t)=\bb_t(\g(t)) \qquad\text{for $\Leb{1}$-a.e. $t\in (0,T)$.}
\end{equation}
for $\Leb{d}$-a.e. $x\in\R^d$. It is easily seen that this is not an
invariant concept, under modification of $\bb$ in negligible sets,
while many applications of the theory to fluid dynamics (see for
instance \cite{lions2}, \cite{lions3}) and conservation laws need
this invariance property. This leads to the concept of \emph{regular
Lagrangian flow} (RLF in short): one may ask that, for all $t\in
[0,T]$, the image of the Lebesgue measure $\Leb{d}$ under the flow
map $x\mapsto\XX(t,x)$ is still controlled by $\Leb{d}$ (see
Definition~\ref{RLflow}). It is not hard to show that, because of
the additional regularity condition imposed on $\XX$, this concept
is indeed invariant under modifications of $\bb$ in Lebesgue
negligible sets (see Remark~\ref{invariaflow}). Hence RLF's are
appropriate to deal with vector fields belonging to Lebesgue $L^p$
spaces. On the other hand, since this regularity condition involves
all trajectories $\XX(\cdot,x)$ up to $\Leb{d}$-negligible sets of
initial data, the best we can hope for using this concept is
existence and uniqueness of $\XX(\cdot,x)$ up to
$\Leb{d}$-negligible sets. Intuitively, this can be viewed as
existence and uniqueness ``with probability one'' with respect to a
reference measure on the space of initial data. Notice that already
in the finite-dimensional theory different reference measures (e.g.
Gaussian, see \cite{amfiga}) could be considered as well.

To establish such existence and uniqueness, one uses that the concept of flow is directly linked,
via the theory of characteristics, to the transport equation
\begin{equation}\label{transport}
\frac{d}{ds} f(s,x)+\langle\bb_s(x),\nabla_x f(s,x)\rangle=0
\end{equation}
and to the continuity equation
\begin{equation}\label{contieq}
\frac{d}{dt}\mu_t+\nabla\cdot (\bb_t\mu_t)=0.
\end{equation}
The first equation has been exploited in \cite{lions} to transfer
well-posedness results from the transport equation to the ODE,
getting uniqueness of RLF (with respect to Lebesgue measure) in
$\R^d$. This is possible because the flow maps $(s,x)\mapsto
\XX(t,s,x)$ (here we made also explicit the dependence on the
initial time $s$, previously set to 0) solve \eqref{transport} for
all $t\in [0,T]$. In the present article, in analogy with the
approach initiated in \cite{ambrosio} (see also \cite{figalli} for a
stochastic counterpart of it, where \eqref{contieq} becomes the
forward Kolmogorov equation), we prefer rather to deal with the
continuity equation, which seems to be more natural in a
probabilistic framework. The link between the ODE \eqref{ODE} and
the  continuity equation \eqref{contieq} can be made precise as
follows: any positive finite measure $\eeta$  on initial values and
paths, $\eeta\in \Probabilities{\R^d\times
C\bigl([0,T];\R^d\bigr)}$, concentrated on solutions $(x,\g)$ to the
ODE with initial condition $x=\g(0)$, gives rise to a
(distributional) solution to \eqref{contieq}, with $\mu_t$ given by
the marginals of $\eeta$ at time $t$: indeed, \eqref{contieq}
describes the evolution of a probability density under the action of
the ``velocity field'' $\bb$. We shall call these measures $\eeta$
\emph{generalized} flows, see Definition \ref{GRLflow}. These facts
lead to the existence, the uniqueness (up to $\Leb{d}$-negligible
sets) and the stability of the RLF $\XX(t,x)$ in $\R^d$
\emph{provided} \eqref{contieq} is well-posed in
$L^\infty_+\bigl([0,T];L^1\cap L^\infty(\R^d)\bigr)$. Roughly
speaking, this should be thought of as a regularity assumption on
$\bb$. See Remark~\ref{rgoodb} and Section~\ref{sgoodliou} for
explicit conditions on $\bb$ ensuring well-posedness.

We shall extend all these results to flows on
$\Probabilities{\R^d}$, the space of probability measures on $\R^d$.
The heuristic idea is that \eqref{contieq} can be viewed as a
(constant coefficients) ODE in the infinite-dimensional space
$\Probabilities{\R^d}$, and that we can achieve uniqueness results
for \eqref{contieq} for ``almost every'' measure initial condition.
We need, however, a suitable reference measure on
$\Probabilities{\R^d}$, that we shall denote by $\nnu$. Our theory
works for many choices of $\nnu$ (in agreement with the fact that no
canonical choice of $\nnu$ seems to exist), provided $\nnu$
satisfies the \emph{regularity} condition
$$
\int_{{\mathscr P}(\R^d)}\mu\,d\nnu(\mu)\leq C\Leb{d},
$$
see Definition~\ref{RegMis}. (See also Example~\ref{eRegMis} for
some natural examples of regular measures $\nnu$.) Given $\nnu$ as
reference measure, and assuming that \eqref{contieq} is well-posed
in $L^\infty_+\bigl([0,T];L^1\cap L^\infty(\R^d)\bigr)$, we prove
existence, uniqueness (up to $\nnu$-negligible sets) and stability
of the regular Lagrangian flow of measures $\mmu$. Since this
assumption is precisely the one needed to have existence and
uniqueness of the RLF $\XX(t,x)$ in $\R^d$, it turns out that the
RLF $\mmu(t,\mu)$ in $\Probabilities{\R^d}$ is given by
\begin{equation}\label{ovvia}
\mmu(t,\mu)=\int_{\R^d} \delta_{\sxX(t,x)}\,d\mu(x)\qquad \forall\,
t\in [0,T],\,\,\mu\in\Probabilities{\R^d},
\end{equation}
which makes the existence part of our results rather easy whenever
an underlying flow $\XX$ in $\R^d$ exists. On the other hand, even in
this situation, it turns out that uniqueness and stability results are
much stronger when stated at the $\Probabilities{\R^d}$ level.

In our proofs, which follow by an infinite-dimensional adaptation of
\cite{ambrosio}, \cite{cetraro}, we use also the concept of
generalized flow in $\Probabilities{\R^d}$, i.e. measures
 $\eeta$ on $\Probabilities{\R^d}\times
C\bigl([0,T];\Probabilities{\R^d}\bigr)$
concentrated on initial data/solution pairs $(\mu,\omega)$
to \eqref{contieq} with $\omega(0)=\mu$, see Definition \ref{GRLflowmis}.

\smallskip
\noindent {\bf Acknowledgement.} We thank Dr. Marilena Ligab\'o for
pointing out a serious gap in a preliminary version of
Theorem~\ref{extracanc}. The first author was partially supported by
ERC ADG Grant GeMeThNES and the second author was partially
supported by NSF Grant DMS-0969962.

\section{Notation and preliminary results}\label{notation}

Let $X$ be a Polish space (i.e. a separable topological space whose
topology is induced by a complete distance). We shall denote by
$\BorelSets{X}$ the $\sigma$-algebra of Borel sets of $X$, by
$\Probabilities{X}$ (resp. $\Measures{X},\,\Measuresp{X}$) the space
of Borel probability (resp. finite Borel, finite Borel nonnegative)
measures on $X$. For $A\in\BorelSets{X}$ and $\nu\in\Measures{X}$,
we denote by $\nu\res A\in\Measures{X}$ the restricted measure,
namely $\nu\res A(B)=\nu(A\cap B)$. Given $f:X\to Y$ Borel and
$\mu\in\Measures{X}$, we denote by $f_\sharp\mu\in\Measures{Y}$ the
push-forward measure on $Y$, i.e. $f_\sharp\mu(A)=\mu(f^{-1}(A))$
(if $\mu$ is a probability measure, $f_\sharp\mu$ is the law of $f$
under $\mu$) and we recall the basic integration rule
$$
\int_Y \phi\,d f_\sharp\mu=\int_X \phi\circ
f\,d\mu\qquad\text{$\phi$ bounded and Borel.}
$$
We denote by $\chi_A$ the characteristic function of a set $A$,
equal to 1 on $A$, and equal to 0 on its complement. Balls in
Euclidean spaces will be denoted by $B_R(x_0)$, and by $B_R$ if
$x_0=0$.

We shall endow $\Probabilities{X}$ with the metrizable topology
induced by the duality with $C_b(X)$, the space of continuous
bounded functions on $X$: this makes $\Probabilities{X}$ itself a
Polish space (see for instance \cite[Remark~5.1.1]{amgisa}), and we
shall also consider measures $\nnu\in\Measuresp{\Probabilities{X}}$.

Typically we shall use greek letters to denote measures, boldface
greek letters to denote measures on the space of measures, and we
occasionally use $d_{{\mathscr P}}$ for a bounded distance in
$\Probabilities{X}$ inducing the weak topology induced by the
duality with $C_b(X)$ (no specific choice of $d_{{\mathscr P}}$ will
be relevant for us). We recall that weak convergence of $\mu_n$ to
$\mu$ implies
\begin{equation}\label{finerco}
\lim_{n\to\infty}\int_X f\,d\mu_n=\int_X f\,d\mu\qquad\text{for all
$f$ bounded Borel, with a $\mu$-negligible discontinuity set.}
\end{equation}
Also, in the case $X=\R^d$, recall that a sequence
$(\mu_n)\subset\Probabilities{\R^d}$ weakly converges to a
probability measure $\mu$ in the duality with $C_b(\R^d)$ if and
only if it converges in the duality with (a dense subspace of)
$C_c(\R^d)$.

We shall consider the space
$C\bigl([0,T];\Probabilities{\R^d}\bigr)$, whose generic element
will be denoted by $\omega$, endowed with the sup norm; for this
space we use the compact notation $\Path{T}{\R^d}$. We also use
$e_t$ as a notation for the evaluation map at time $t$, so that
$e_t(\omega)=\omega(t)$. Again, we shall consider measures
$\eeta\in\Measuresp{\Path{T}{\R^d}}$ and the basic criterion we
shall use is the following:

\begin{proposition}[Tightness]\label{ptightness}
Let $(\eeta_n)\subset\Measuresp{\Path{T}{\R^d}}$ be a bounded family
satisfying:
\begin{itemize}
\item[(i)] (space tightness)  for all $\eps>0$,
$\sup_n\eeta_n\Bigl(\bigl\{\omega:\ \sup\limits_{t\in
[0,T]}\omega(t)(\R^d\setminus B_R)>\eps\bigr\}\Bigr)\to 0$ as
$R\to\infty$; \item[(ii)] (time tightness) for all $\phi\in
C^\infty_c(\R^d)$, $n\geq 1$, the map
$t\mapsto\int_{\R^d}\phi\,d\omega(t)$ is absolutely continuous in
$[0,T]$ for $\eeta_n$-a.e. $\omega$ and
$$
\lim_{M\uparrow\infty}\sup_n\eeta_n\biggl(\Bigr\{\omega:\
\int_0^T\biggl|\biggl(\int_{\R^d}\phi\,d\omega(t)\biggr)'\biggr|\,dt>M\Bigr\}\biggr)=0.
$$
\end{itemize}
Then $(\eeta_n)$ is tight.
\end{proposition}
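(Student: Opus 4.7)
The plan is to apply Prokhorov's compactness theorem on the Polish space $\Path{T}{\R^d}$: tightness of $(\eeta_n)$ is equivalent to the existence, for every $\delta>0$, of a relatively compact subset $K_\delta\subset\Path{T}{\R^d}$ with $\sup_n \eeta_n(\Path{T}{\R^d}\setminus K_\delta)\leq\delta$. I would construct $K_\delta$ by intersecting a spatial-tightness event coming from (i) and a temporal BV-control event coming from (ii), and then prove its relative compactness by an Arzel\`a--Ascoli argument in $C([0,T];\Probabilities{\R^d})$ endowed with the sup-norm metric $\sup_t d_{\mathscr P}(\omega(t),\omega'(t))$.

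For the construction, using (i), I pick $R_k\uparrow\infty$ and $\eps_k\downarrow 0$ such that the set
$A_\delta:=\{\omega:\sup_{t\in[0,T]}\omega(t)(\R^d\setminus B_{R_k})\leq\eps_k,\ \forall k\geq 1\}$
satisfies $\sup_n \eeta_n(A_\delta^c)\leq\delta/2$; on $A_\delta$ the collection $\{\omega(t):\omega\in A_\delta,\,t\in[0,T]\}$ is uniformly tight, hence relatively compact in $\Probabilities{\R^d}$ by Prokhorov in $\R^d$. Next I fix a countable family $\{\phi_j\}_{j\geq 1}\subset C^\infty_c(\R^d)$, dense in $C_c(\R^d)$ and normalized so that $\|\phi_j\|_\infty\leq 1$; by (ii) I choose constants $M_j=M_j(\delta)$ with
$\sup_n \eeta_n(\{\omega:\int_0^T|(\int\phi_j\,d\omega(t))'|\,dt>M_j\})\leq\delta\, 2^{-j-2}$,
and set $B_\delta:=\bigcap_j\{\omega:\int_0^T|(\int\phi_j\,d\omega(t))'|\,dt\leq M_j\}$, so that $\sup_n\eeta_n(B_\delta^c)\leq\delta/4$. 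Taking $K_\delta:=\overline{A_\delta\cap B_\delta}$ in $\Path{T}{\R^d}$ then gives $\sup_n\eeta_n(\Path{T}{\R^d}\setminus K_\delta)\leq\delta$.

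The decisive step is to prove that $K_\delta$ is relatively compact. Pointwise relative compactness of values in $\Probabilities{\R^d}$ is immediate from the spatial tightness. The hard part, which I expect to be the main obstacle, is equicontinuity of the family $K_\delta$ in the sup-norm of $d_{\mathscr P}$, since a uniform bound $\|(\int\phi_j\,d\omega)'\|_{L^1}\leq M_j$ does not by itself yield a uniform modulus of continuity for the real-valued moments (the embedding $W^{1,1}([0,T])\hookrightarrow C([0,T])$ is continuous but not compact). My approach would be sequential: given $\omega_n\in A_\delta\cap B_\delta$, apply Helly's selection theorem diagonally to the bounded-variation functions $t\mapsto\int\phi_j\,d\omega_n(t)$ to extract a subsequence with pointwise limits for each $j$; use the uniform tightness and the density of $\{\phi_j\}$ in $C_c(\R^d)$ to identify the pointwise limit as a path $\omega_\infty(t)\in\Probabilities{\R^d}$; and finally invoke the absolute-continuity hypothesis in (ii) together with the joint control of all moments and the equi-tightness of values to upgrade this pointwise convergence to uniform convergence on $[0,T]$ with respect to $d_{\mathscr P}$. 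The fact that the bound in (ii) is required for \emph{every} $\phi\in C^\infty_c(\R^d)$, not just a single one, is precisely what makes the upgrade possible.
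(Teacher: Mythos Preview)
Your overall architecture matches the paper's: build a candidate compact set by intersecting a spatial-tightness event with a countable family of BV-moment controls, then prove sequential compactness by extracting convergent subsequences along scalar moments and identifying the limit path. The paper diverges from you at exactly the point you flag: rather than acknowledging the non-compactness of $W^{1,1}$-balls in $C([0,T])$ and attempting a Helly-plus-upgrade workaround, the paper simply asserts that the sets $\{f\in W^{1,1}(0,T):\sup|f|\leq C,\ \int_0^T|f'|\,dt\leq M\}$ are compact in $C([0,T])$ and uses this to get tightness of each $(I_{\phi_k})_\sharp\eeta_n$ in $\Measuresp{C([0,T])}$, hence \emph{uniform} convergence of the scalar moments and continuity of the limit path. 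Your skepticism here is well founded: $f_n(t)=\min(nt,1)$ lies in this set with $C=M=1$ but has no uniformly convergent subsequence.

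Unfortunately the gap cannot be filled, and your upgrade step must fail, because the proposition as stated is false. Take $d=1$, $T=1$, $\omega_n(t):=(1-\min(nt,1))\,\delta_0+\min(nt,1)\,\delta_1$ and $\eeta_n:=\delta_{\omega_n}$. Hypothesis (i) holds trivially (all mass sits in $\{0,1\}$), and for every $\phi\in C^\infty_c(\R)$ one computes $\int_0^1\bigl|(\int\phi\,d\omega_n)'\bigr|\,dt=|\phi(1)-\phi(0)|$, a bound independent of $n$, so hypothesis (ii) holds as well. Yet $(\omega_n)$ has no subsequence converging in $C\bigl([0,1];\Probabilities{\R}\bigr)$, since the only candidate limit is $\delta_0$ at $t=0$ and $\delta_1$ for $t>0$, which is discontinuous; hence $(\eeta_n)$ is not tight. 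So neither the paper's compactness claim nor your pointwise-to-uniform upgrade can succeed under the stated hypotheses. What is missing is an assumption that delivers genuine equicontinuity of the scalar moments---for instance a uniform-in-$t$ (rather than $L^1$-in-$t$) bound on $(\int\phi\,d\omega(t))'$, which is in fact what the paper actually verifies in its applications of this proposition.
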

\begin{proof}
For all $\phi\in C^\infty_c(\R^d)$ we shall denote by
$I_\phi:\Path{T}{\R^d}\to C\bigl([0,T]\bigr)$ the time-dependent
integral w.r.t. $\phi$. Since the sets
$$
\left\{f\in W^{1,1}(0,T):\ \sup|f|\leq
C,\,\,\,\int_0^T|f'(t)|\,dt\leq M\right\}
$$
are compact in $C\bigl([0,T]\bigr)$, by assumption (ii) the sequence
$((I_\phi)_\sharp\eeta_n)$ is tight in
$\Measuresp{C\bigl([0,T]\bigr)}$ for all $\phi\in C^\infty_c(\R^d)$.
Hence, if we fix a countable dense set $(\phi_k)\subset
C^\infty_c(\R^d)$ and $\eps>0$, we can find for $k\geq 1$ compact
sets $K_k^\eps\subset C\bigl([0,T]\bigr)$ such that
$\sup_n\eeta_n\bigl(\Path{T}{\R^d}\setminus
I_{\phi_k}^{-1}(K^\eps_k)\bigr)<\eps 2^{-k}$. Thus, if $K^\eps$
denotes the intersection of all sets $I_{\phi_k}^{-1}(K_k^\eps)$, we
get
$$
\sup_n\eeta_n\bigl(\Path{T}{\R^d}\setminus K^\eps)<\eps.
$$
Analogously, using assumption (i) we can build another compact set
$L^\eps\subset\Path{T}{\R^d}$ such that
$\sup_n\eeta_n\bigl(\Path{T}{\R^d}\setminus L^\eps)<\eps$ and, for
all integers $k\geq 1$, there exists $R=R_k$ such that
$\omega(t)(\R^d\setminus B_R)<1/k$ for all $\omega\in L^\eps$ and
$t\in [0,T]$.

In order to conclude, it suffices to show that $K^\eps\cap L^\eps$
is compact in $\Path{T}{\R^d}$: if $(\omega_p)\subset K^\eps\cap
L^\eps$ we can use the inclusion in $I_{\phi_k}^{-1}(K_k^\eps)$
and a diagonal argument to extract a subsequence
$(\omega_{p(\ell)})$ such that $\int\phi_k\,d\omega_{p(\ell)}(t)$
has a limit for all $t\in [0,T]$ and all $k\geq 1$ and the limit
is continuous in time. By the space tightness given by the
inclusion $(\omega_p)\subset L^\eps$, $\omega_{p(\ell)}(t)$
converges to $\omega(t)$ in $\Probabilities{\R^d}$ for all $t\in
[0,T]$, and $t\mapsto\omega(t)$ is continuous.
\end{proof}

The next lemma is a refinement of \cite[Lemma~22]{cetraro} and
\cite[Corollary~5.23]{villani2}, and allows to obtain convergence in probability
from weak convergence of the measures induced on the graphs.

\begin{lemma}\label{lplans}
Let $f_n:X\to Y$, $f:X\to Y$ be Borel maps,
$\nu_n,\,\nu\in\Probabilities{X}$ and assume that $(Id\times
f_n)_\sharp\nu_n$ weakly converge to $(Id\times f)_\sharp\nu$ in
$X\times Y$. Assume in addition that we have the Skorokhod
representations $\nu_n=(i_n)_\sharp \P$, $\nu=i_\sharp \P$, with
$(W,{\cal F},\P)$  probability measure space, $i_n,\,i:W\to X$
measurable, and $i_n\to i$ $\P$-almost everywhere. \\ Then
$f_n\circ i_n\to f\circ i$ in $\P$-probability.
\end{lemma}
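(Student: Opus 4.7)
The plan is to couple the two sequences of maps on a single probability space and extract rigidity from the two kinds of convergence we have in hand: the $\P$-almost sure convergence of $i_n$ to $i$ on the base, and the weak convergence of the joint graph laws $(\id\times f_n)_\sharp\nu_n$ on $X\times Y$. Set $g_n:=f_n\circ i_n$ and $g:=f\circ i$, viewed as $W\to Y$; we want $g_n\to g$ in $\P$-probability. By the change of variables formula the hypothesis rewrites as $(i_n,g_n)_\sharp\P\to (i,g)_\sharp\P=:\pi$ weakly in $X\times Y$. The key idea is to test this joint convergence against the almost sure convergence of $i_n$ to $i$ by lifting everything to $(X\times Y)\times(X\times Y)$.

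First I would consider the coupled measures $\sigma_n:=\bigl((i_n,g_n),(i,g)\bigr)_\sharp\P$ on $(X\times Y)\times(X\times Y)$. Since their two marginals are $\pi_n:=(i_n,g_n)_\sharp\P$ and $\pi$, both converging weakly to $\pi$, the family $(\sigma_n)$ has tight marginals and is therefore itself tight. Let $\sigma$ be any weak limit point, obtained along some subsequence $\sigma_{n_k}\to\sigma$. Both marginals of $\sigma$ equal $\pi$, which is concentrated on the graph of $f$, so $\sigma$ gives full mass to the set $\{((x,y),(x',y')):y=f(x),\ y'=f(x')\}$.

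Next I would exploit the almost sure convergence $i_n\to i$ to show $\sigma$ is concentrated on $\{x=x'\}$: the continuous bounded function $\chi(x,x'):=\min(1,d_X(x,x'))$ satisfies $\int\chi\,d(i_n,i)_\sharp\P\to 0$ by dominated convergence, and $(i_n,i)_\sharp\P$ is the push-forward of $\sigma_n$ by the continuous projection onto the two $X$-components, so passing to the limit along the subsequence gives $\int\chi\,d\bigl(\mathrm{pr}_X\times\mathrm{pr}_{X'}\bigr)_\sharp\sigma=0$, i.e.\ $x=x'$ $\sigma$-a.s. Combined with $y=f(x)$ and $y'=f(x')$, this forces $y=y'$ $\sigma$-a.s., so every subsequential limit $\sigma$ is carried by the diagonal $\{((x,y),(x,y))\}$.

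To conclude, fix $\eps>0$ and apply $\sigma_n\to\sigma$ to the bounded continuous test function $\psi((x,y),(x',y')):=\min\bigl(1,d_Y(y,y')/\eps\bigr)$, which vanishes on the diagonal. Since every subsequential limit gives $\int\psi\,d\sigma=0$, the whole sequence satisfies $\int\psi\,d\sigma_n\to 0$; but $\int\psi\,d\sigma_n\geq\P(d_Y(g_n,g)>\eps)$, which yields convergence in $\P$-probability. The only delicate point is justifying that the weak limit of a product of almost sure and weak convergences produces the expected rigidity on the diagonal; tightness of $\sigma_n$ and the subsequence argument sidestep the need for any Skorokhod-type representation beyond the one already supplied by the hypothesis.
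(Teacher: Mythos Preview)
Your proof is correct and takes a genuinely different route from the paper's. The paper argues directly via a three-term splitting: it invokes Lusin's theorem to approximate the Borel map $f$ by a continuous bounded $g\in C_b(X;Y)$ with $\int_X d_Y(g,f)\,d\nu\leq\eps^2$, then bounds $\{d_Y(f_n\circ i_n,f\circ i)>3\eps\}$ by the union of $\{d_Y(f_n\circ i_n,g\circ i_n)>\eps\}$, $\{d_Y(g\circ i_n,g\circ i)>\eps\}$, and $\{d_Y(g\circ i,f\circ i)>\eps\}$; the middle term is handled by continuity of $g$ and $i_n\to i$ a.e., the last by Markov's inequality, and the first by testing the weak convergence of $(\id\times f_n)_\sharp\nu_n$ against the continuous function $\chi(x,y)=d_Y(g(x),y)$.

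Your argument instead couples everything on $(X\times Y)^2$ via $\sigma_n=\bigl((i_n,g_n),(i,g)\bigr)_\sharp\P$ and extracts rigidity from subsequential weak limits: tightness from the marginals, concentration on $\{x=x'\}$ from the a.e.\ convergence $i_n\to i$, concentration on $\{y=f(x),\,y'=f(x')\}$ from the graph structure of the limit marginal $\pi$, hence diagonal concentration, and finally the test against $\min(1,d_Y(y,y')/\eps)$. This avoids Lusin-type approximation and continuous extension altogether, relying only on Prokhorov and elementary coupling; it is arguably more conceptual and would transplant more easily to settings where good continuous approximants of $f$ are not readily available. The paper's proof, on the other hand, is shorter and more explicit, and does not pass through compactness or subsequences.
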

\begin{proof}
Let $d_Y$ denote the distance in $Y$. Up to replacing $d_Y$ by
$\min\{d_Y,1\}$, with no loss of generality we can assume that the
distance in $Y$ does not exceed 1. Fix $\eps>0$ and $g\in
C_b(X;Y)$ with $\int_Xd_Y(g,f)\,d\nu\leq\eps^2$. We have that
$\{d_Y(f_n\circ i_n,f\circ i)>3\eps\}$ is contained in $$
\{d_Y(f_n\circ i_n,g\circ i_n)>\eps\}\cup \{d_Y(g\circ i_n,g\circ
i)>\eps\}\cup \{d_Y(g\circ i,f\circ i)>\eps\}.
$$
The second set has infinitesimal $\P$-probability, since $g$ is
continuous and $i_n\to i$ $\P$-a.e.; the third set, by Markov
inequality, has $\P$-probability less than $\eps$; to estimate the
$\P$-probability of the first set we notice that
$$
\P(\{d_Y(f_n\circ i_n,g\circ
i_n)>\eps\})=\nu_n(\{d_Y(f_n,g)>\eps\})\leq
\frac{1}{\eps}\int_{X\times Y}\chi\,d(Id\times f_n)_\sharp\nu_n
$$
with $\chi(x,y):=d_Y(g(x),y)$. The weak convergence of $(Id\times
f_n)_\sharp\nu_n$ yields
\begin{eqnarray*}
\limsup_{n\to\infty}\P(\{d_Y(f_n\circ i_n,g\circ
i_n)>\eps\})&\leq& \frac{1}{\eps}\int_{X\times Y}\chi\,d(Id\times
f)_\sharp\nu\\&=& \frac{1}{\eps}\int_X
d_Y(g(x),f(x))\,d\nu(x)\leq\eps.
\end{eqnarray*}
\end{proof}

\section{Continuity equations and flows}

In this section we shall specify the basic assumptions on $\bb$ used
throughout this paper, and the conventions about \eqref{contieq}
concerning locally bounded respectively measure-valued solutions.
We shall also collect the basic definitions of
regular flows we shall work with, recalling first those used when
the state space is $\R^d$ and then extending these concepts to
$\Probabilities{\R^d}$.

\subsection{Continuity equations}

We consider a Borel vector field $\bb:[0,T]\times\R^d\to\R^d$, and
set $\bb_t(\cdot):=\bb(t,\cdot)$; we \emph{shall not} work with the
Lebesgue equivalence class of $\bb$, although a posteriori our
theory is independent of the choice of the representative (see
Remark~\ref{invariaflow}); this is important in view of the fact
that \eqref{contieq} involves possibly singular measures. Also, we
\emph{shall not} make any integrability assumption on $\bb$ besides
$L^1_{\rm loc}\bigl([0,T]\times\R^d\bigr)$ (namely, the Lebesgue
integral of $|\bb|$ is finite on $[0,T]\times B_R$ for all $R>0$);
the latter is needed in order to give a distributional sense to the
functional version of \eqref{contieq}, namely
\begin{equation}
\label{contieqw}
\frac{d}{dt} w_t+\nabla\cdot(\bb_t w_t)=0
\end{equation}
coupled with an initial condition $w_0=\bar w \in L_{\rm loc}^\infty(\R^d)$, when
$w_t$
is locally bounded in space-time.

It is well-known and easy to check that any distributional
solution $w(t,x)=w_t(x)$ to \eqref{contieqw} with $w_t$ locally
bounded in $\R^d$ uniformly in time, can be modified in a
$\Leb{1}$-negligible set of times in such a way that $t\mapsto
w_t$ is continuous w.r.t. the duality with $C_c(\R^d)$, and
well-defined limits exist at $t=0$, $t=T$ (see for instance
\cite[Lemma~8.1.2]{amgisa} for a detailed proof). In particular
the
initial condition $w_0=\bar w$ is then well defined, and we shall always
work with this weakly continuous representative.

In the sequel, we shall say that the continuity equation
\eqref{contieqw} has uniqueness in the cone of functions
$L^\infty_+\bigl([0,T];L^1\cap L^\infty(\R^d)\bigr)$ if, for any
$\bar w\in L^1\cap L^\infty(\R^d)$ nonnegative, there exists at most
one nonnegative solution $w_t$ to \eqref{contieqw} in
$L^\infty\bigl([0,T];L^1\cap L^\infty(\R^d)\bigr)$ satisfying the
condition
\begin{equation}\label{initial}
w_0=\bar w.
\end{equation}

Coming to
measure-valued solutions to
\eqref{contieq},
we say that $t\in
[0,T]\mapsto \mu_t\in {\mathscr M}_+(\R^d)$ solves \eqref{contieq} if
$|\bb|\in L^1_{\rm loc}\bigl((0,T)\times\R^d;\mu_tdt\bigr)$,
the equation holds in the sense of distributions and $t\mapsto\int\phi\,d\mu_t$
is continuous in $[0,T]$ for all $\phi\in C_c(\R^d)$.

\subsection{Flows in $\R^d$}

\begin{definition}[$\nu$-RLF in $\R^d$]\label{RLflow}
Let $\XX:[0,T]\times\R^d\to\R^d$ and $\nu\in {\mathscr M}_+(\R^d)$
with $\nu\ll\Leb{d}$ and with bounded density. We say that $\XX$ is
a $\nu$-RLF in $\R^d$ (relative to $\bb\in L^1_{\rm
loc}\bigl((0,T)\times\R^d\bigr)$) if the following two conditions
are fulfilled:
\begin{itemize}
\item[(i)] for $\nu$-a.e. $x$, the function $t\mapsto\XX(t,x)$
is an absolutely continuous integral solution to the ODE \eqref{ODE}
in $[0,T]$ with $\XX(0,x)=x$;
\item[(ii)] $\XX(t,\cdot)_\sharp\nu\leq C\Leb{d}$ for all $t\in
[0,T]$, for some constant $C$ independent of $t$.
\end{itemize}
\end{definition}

Notice that, in view of condition (ii), the assumption of bounded
density of $\nu$ is necessary for the existence of the $\nu$-RLF, as
$\XX(0,\cdot)_\sharp\nu=\nu$.

In this context, since all admissible initial measures $\nu$ are
bounded above by $C\Leb{d}$, uniqueness of the $\nu$-RLF can and
will be understood in the following stronger sense: if $f,\,g\in
L^1(\R^d)\cap L^\infty(\R^d)$ are nonnegative and $\XX$ and $\YY$
are respectively a $f\Leb{d}$-RLF and a $g\Leb{d}$-RLF, then
$\XX(\cdot,x)=\YY(\cdot,x)$ for $\Leb{d}$-a.e. $x\in\{f>0\}\cap
\{g>0\}$.

\begin{remark}[$BV$ vector fields]\label{rgoodb}
{\rm We shall use in particular the fact that the $\nu$-RLF exists
for all $\nu\leq C\Leb{d}$, and is unique, in the strong sense
described above, under the following
assumptions on $\bb$: $|\bb|$ is uniformly bounded, $\bb_t\in
BV_{\rm loc}(\R^d;\R^d)$ and $\nabla\cdot\bb_t=g_t\Leb{d}\ll\Leb{d}$
for $\Leb{1}$-a.e. $t\in (0,T)$, with
$$
\|g_t\|_{L^\infty(\R^d)}\in L^1(0,T),\qquad |D\bb_t|(B_R)\in
L^1(0,T)\quad\text{for all $R>0$,}
$$
where $|D\bb_t|$ denotes the total variation of the distributional
derivative of $\bb_t$. See \cite{ambrosio} or
\cite{cetraro} and the paper \cite{bouchut} for Hamiltonian vector
fields.}
\end{remark}

\begin{remark}[$\Leb{d}$-RLF] \label{rlebflow}{\rm In all situations where the $\nu$-RLF
exists and is unique, one can also define by an exhaustion procedure
a $\Leb{d}$-RLF $\XX$, uniquely determined (and well defined) by the
property
$$
\XX(\cdot,x)=\XX^f(\cdot,x)\quad\text{$\Leb{d}$-a.e. on $\{f>0\}$}
$$
for all $f\in L^\infty\cap L^1(\R^d)$ nonnegative, where $\XX^f$ is
the $f\Leb{d}$-flow. Also, it turns out that if \eqref{contieqw} has
\emph{backward} uniqueness, and if the constant $C$ in
Definition~\ref{RLflow}(ii) can be chosen independently of
$\nu\leq\Leb{d}$, then $\XX(t,\cdot)_\sharp\Leb{d}\leq C\Leb{d}$. We
don't prove this last statement here, since it will not be needed in
the rest of the paper, and we mention this just for completeness.
}\end{remark}

In the proof of stability and uniqueness results it is actually more
convenient to consider a generalized concept of flow, see
\cite{cetraro} for a more complete discussion. We denote the
evaluation map $(x,\omega)\in\R^d\times C([0,T];\R^d)\mapsto
\omega(t)\in\R^d$ again with $e_t$.

\begin{definition}[Generalized $\nu$-RLF in
$\R^d$]\label{GRLflow} Let $\nu\in {\mathscr M}_+(\R^d)$ and
$\eeta\in \Probabilities{\R^d\times C\bigl([0,T];\R^d\bigr)}$. We
say that $\eeta$ is a generalized $\nu$-RLF in $\R^d$ (relative to
$\bb$) if:
\begin{itemize}
\item[(i)] $(e_0)_\sharp\eeta=\nu$; \item[(ii)] $\eeta$ is
concentrated on the set of pairs $(x,\gamma)$, with $\gamma$
absolutely continuous solution to \eqref{ODE}, and $\gamma(0)=x$;
\item[(iii)] $(e_t)_\sharp\eeta\leq C\Leb{d}$ for all $t\in
[0,T]$, for some constant $C$ independent of $t$.
\end{itemize}
\end{definition}

\subsection{Flows in $\Probabilities{\R^d}$}

Given a nonnegative $\sigma$-finite measure
$\nnu\in\Measuresp{\Probabilities{\R^d}}$, we denote by
$\E\nnu\in\Measuresp{\R^d}$ its expectation, namely
$$
\int_{\R^d}\phi\,d\E\nnu=\int_{{\mathscr
P}(\R^d)}\int_{\R^d}\phi\,d\mu \,d\nnu(\mu)\qquad\text{for all
$\phi$ bounded Borel.}
$$

\begin{definition}[Regular measures on
$\Measuresp{\Probabilities{\R^d}}$]\label{RegMis} Let
$\nnu\in\Measuresp{\Probabilities{\R^d}}$. We say that $\nnu$ is
\emph{regular} if $\E\nnu\leq C\Leb{d}$ for some constant $C$.
\end{definition}

\begin{example}\label{eRegMis}
{\rm (1) The first standard example of a regular measure $\nnu$ is
the law under $\rho\Leb{d}$ of the map $x\mapsto\delta_x$, with
$\rho\in L^1(\R^{d})\cap L^\infty(\R^d)$ nonnegative. Actually, one can
even consider the law under $\Leb{d}$, and in this case
$\nnu$ would be $\sigma$-finite instead of a finite nonnegative
measure.

\smallskip
(2) If $d=2n$ and $z=(x,p)\in\R^n\times\R^n$ (this factorization
corresponds for instance to flows in a phase space), one may
consider the law under $\rho\Leb{n}$ of the map $x\mapsto
\delta_x\times\gamma$, with $\rho\in L^1(\R_x^{n})\cap
L^\infty(\R^n_x)$ nonnegative and $\gamma\in\Probabilities{\R^n_p}$
with $\gamma\le C\Leb{n}$; one can also choose $\gamma$ dependent on
$x$, provided $x\mapsto\gamma_x$ is measurable and $\gamma_x\leq
C\Leb{n}$ for some constant $C$ independent of $x$.

\smallskip
(3) We also expect that the \emph{entropic} measures built in
\cite{Sturm1}, \cite{Sturm2} are regular, see also the references
therein for more examples of ``natural'' reference measures on the
space of measures.}
\end{example}

As we explained in the introduction, Definition~\ref{RLflow} has a
natural (but not perfect) transposition to flows in
$\Probabilities{\R^d}$:

\begin{definition}[$\nnu$-RLF in $\Probabilities{\R^d}$] \label{RLflowmis} Let
$\mmu:[0,T]\times\Probabilities{\R^d}\to\Probabilities{\R^d}$ and
$\nnu\in\Measuresp{\Probabilities{\R^d}}$. We say that $\mmu$ is a
$\nnu$-RLF in $\Probabilities{\R^d}$ (relative to $\bb$ with
$|\bb|\in L^1_{\rm loc}\bigl((0,T)\times\R^d;\mu_tdt\bigr)$) if
\begin{itemize}
\item[(i)] for $\nnu$-a.e. $\mu$,
$t\mapsto\mu_t:=\mmu(t,\mu)$ is (weakly) continuous from $[0,T]$
to $\Probabilities{\R^d}$ with $\mmu(0,\mu)=\mu$ and $\mu_t$ solves
\eqref{contieq} in the sense of distributions;
\item[(ii)] $\E(\mmu(t,\cdot)_\sharp\nnu)\leq C\Leb{d}$ for all $t\in [0,T]$, for
some constant $C$ independent of $t$.
\end{itemize}
\end{definition}

Notice that no $\nnu$-RLF can exist if $\nnu$ is not regular, as
$\mmu(0,\cdot)_\sharp\nnu=\nnu$. Notice also that condition (ii) is
in some sense weaker than $\mmu(t,\cdot)_\sharp\nnu\leq C\nnu$
(which would be the analogue of (ii) in Definition~\ref{RLflow} if
we were allowed to choose $\nu=\Leb{d}$, see also
Remark~\ref{rlebflow}), but it is sufficient for our purposes. As a
matter of fact, because of infinite-dimensionality, the requirement
of quasi-invariance of $\nnu$ under the action of the flow $\mmu$
(namely the condition $\mmu(t,\cdot)_\sharp\nnu\ll\nnu$) would be a
quite strong condition: for instance, if the state space is a separable
Banach space $V$, the reference measure $\gamma$ is a nondegenerate
Gaussian measure, and $\bb(t,x)=v$, then $\XX(t,x)=x+tv$, and the
quasi-invariance occurs only if $v$ belongs to the Cameron-Martin
subspace $H$ of $V$, a dense but $\gamma$-negligible subspace. In
our framework, Example~\ref{eRegMis}(2) provides a natural measure
$\nnu$ that is not invariant, because its support is not invariant,
under the flow: to realize that invariance may fail,
it suffices to choose autonomous vector fields of
the form $\bb(x,p):=(p,-\nabla U(x))$.

\begin{remark}[Invariance of $\nnu$-RLF]\label{invariaflow}
{\rm Assume that $\mmu(t,\mu)$ is a $\nnu$-RLF relative to $\bb$
and $\tilde{\bb}$ is a modification of $\bb$, i.e., for
$\Leb{1}$-a.e. $t\in (0,T)$ the set
$N_t:=\{\bb_t\neq\tilde{\bb}_t\}$ is $\Leb{d}$-negligible. Then,
because of condition (ii) we know that, for all $t\in (0,T)$,
$\mmu(t,\mu)(N_t)=0$ for $\nnu$-a.e. $\mu$. By Fubini's theorem,
we obtain that, for $\nnu$-a.e. $\mu$, the set of times $t$ such
that $\mmu(t,\mu)(N_t)>0$ is $\Leb{1}$-negligible in $(0,T)$. As a
consequence $t \mapsto \mmu(t,\mu)$ is a solution to
\eqref{contieq} with $\tilde\bb_t$ in place of $\bb_t$, and $\mmu$
is a $\nnu$-RLF relative to $\tilde{\bb}$ as well. }\end{remark}

In the next definition, as in Definition~\ref{GRLflow}, we are going
to consider measures on $\Probabilities{\R^d}\times\Path{T}{\R^d}$,
the first factor being a convenient label for the initial position
of the path (an equivalent description could be given using just
measures on $\Path{T}{\R^d}$, at the price of an heavier use of
conditional probabilities, see \cite[Remark~11]{cetraro} for a more
precise discussion). We keep using the notation $e_t$ for the
evaluation map, so that $e_t(\mu,\omega)=\omega(t)$.

\begin{definition}[Generalized $\nnu$-RLF in
$\Probabilities{\R^d}$]\label{GRLflowmis} Let
$\nnu\in\Measuresp{\Probabilities{\R^d}}$ and $\eeta\in
\Measuresp{\Probabilities{\R^d}\times\Path{T}{\R^d}}$. We say that
$\eeta$ is a generalized $\nnu$-RLF in $\Probabilities{\R^d}$
(relative to $\bb$ with  $|\bb|\in L^1_{\rm
loc}\bigl((0,T)\times\R^d;\mu_tdt\bigr)$) if:
\begin{itemize}
\item[(i)] $(e_0)_\sharp\eeta=\nnu$;
\item[(ii)] $\eeta$ is
concentrated on the set of pairs $(\mu,\omega)$, with $\omega$
solving \eqref{contieq}, $\omega(0)=\mu$;
\item[(iii)] $\E((e_t)_\sharp\eeta)\leq C\Leb{d}$ for all $t\in [0,T]$,
for some constant $C$ independent of $t$.
\end{itemize}
\end{definition}

Again, by conditions (i) and (iii), no generalized $\nnu$-RLF can
exist if $\nnu$ is not regular. Of course any $\nnu$-RLF $\mmu$
induces a generalized $\nnu$-RLF $\eeta$: it suffices to define
\begin{equation}\label{induced}
\eeta:=(\Psi_\smmu)_\sharp\nnu,
\end{equation}
where
\begin{equation}\label{induced1}
\Psi_\smmu:\Probabilities{\R^d}\to
\Probabilities{\R^d}\times\Path{T}{\R^d},\qquad
\Psi_\smmu(\mu):=(\mu,\mmu(\cdot,\mu)).
\end{equation}
It turns out that existence results are stronger at the RLF level,
while results concerning uniqueness are stronger at the generalized
RLF level.

The transfer mechanisms between generalized and classical flows, and
between flows in $\Probabilities{\R^d}$ and flows in $\R^d$ are illustrated
by the next proposition.

\begin{proposition}\label{pindco}
Let $\eeta$ be a generalized $\nnu$-RLF in $\Probabilities{\R^d}$ relative to $\bb$. Then:
\begin{itemize}
\item[(i)] $\E\eeta$ is a generalized $\E\nnu$-RLF in $\R^d$ relative to $\bb$;
\item[(ii)] the measures
$\mu_t:=\E((e_t)_\sharp\eeta)=(e_t)_\sharp\E\eeta\in {\mathscr M}_+(\R^d)$ satisfy
\eqref{contieq}.
\end{itemize}
In addition, $\mu_t=w_t\Leb{d}$ with $w\in
L^\infty_+\bigl([0,T];L^1\cap L^\infty(\R^d)\bigr)$.
\end{proposition}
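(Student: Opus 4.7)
The plan is to construct $\E\eeta$ by applying the superposition principle of Ambrosio--Gigli--Savar\'e to the time-averaged marginal family. First I would set $\mu_t:=\E\bigl((e_t)_\sharp\eeta\bigr)\in\Measuresp{\R^d}$; concretely, for $\phi\in C_b(\R^d)$,
$$\int_{\R^d}\phi\,d\mu_t = \int_{\Probabilities{\R^d}\times \Path{T}{\R^d}}\int_{\R^d}\phi\,d\omega(t)\,d\eeta(\mu,\omega).$$
By Definition~\ref{GRLflowmis}(iii) we get $\mu_t\le C\Leb{d}$, and since $\omega(t)\in\Probabilities{\R^d}$ for every $t\in[0,T]$ and every $\omega\in\Path{T}{\R^d}$, the total mass is $\mu_t(\R^d) = |\eeta| = |\nnu|$, which is finite. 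Hence $\mu_t = w_t\Leb{d}$ with $w\in L^\infty_+\bigl([0,T];L^1\cap L^\infty(\R^d)\bigr)$, which is the concluding assertion.

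Next I would verify that $t\mapsto\mu_t$ solves \eqref{contieq}. By Definition~\ref{GRLflowmis}(ii), each $\omega$ in $\supp\eeta$ satisfies, for $\phi\in C^\infty_c(\R^d)$ and $0\le t_0<t_1\le T$,
$$\int_{\R^d}\phi\,d\omega(t_1)-\int_{\R^d}\phi\,d\omega(t_0)=\int_{t_0}^{t_1}\int_{\R^d}\langle\bb_s,\nabla\phi\rangle\,d\omega(s)\,ds.$$
Integrating against $\eeta$ and applying Fubini (legitimate since $\mu_s\le C\Leb{d}$, $|\bb|\in L^1_{\rm loc}((0,T)\times\R^d;\mu_s ds)$, and $\nabla\phi$ has compact support) transfers the identity to $\mu_t$, which is the weak form of~\eqref{contieq}.

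Then I would invoke the superposition principle (e.g.\ Theorem~8.2.1 in \cite{amgisa}) applied to the weakly continuous nonnegative solution $t\mapsto\mu_t$: it yields a measure $\E\eeta\in\Measuresp{\R^d\times C([0,T];\R^d)}$ concentrated on pairs $(x,\gamma)$ with $\gamma$ absolutely continuous solving \eqref{ODE} and $\gamma(0)=x$, and satisfying $(e_t)_\sharp\E\eeta=\mu_t$ for every $t\in[0,T]$. Part~(ii) is then immediate from $(e_t)_\sharp\E\eeta=\mu_t=\E((e_t)_\sharp\eeta)$. For part~(i) I check the three items of Definition~\ref{GRLflow}: item~(i) because $(e_0)_\sharp\E\eeta=\mu_0=\int\mu\,d(e_0)_\sharp\eeta(\mu)=\int\mu\,d\nnu(\mu)=\E\nnu$ using Definition~\ref{GRLflowmis}(i); item~(ii) by construction of the lift; item~(iii) because $(e_t)_\sharp\E\eeta=\mu_t\le C\Leb{d}$.

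The main obstacle I anticipate is applying the superposition principle under only the mild local integrability $|\bb|\in L^1_{\rm loc}((0,T)\times\R^d;\mu_s ds)$ rather than a global $L^1$ bound: the classical statement usually demands stronger integrability, so I would proceed by localizing in space via cutoffs and then gluing the resulting partial lifts into a single global measure on $C([0,T];\R^d)$, a procedure which also suffices to justify the Fubini exchange of paragraph two.
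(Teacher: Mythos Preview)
Your argument is correct, but it inverts the paper's order and fills in considerably more detail than the paper gives. The paper's two-line proof asserts (i) directly ``since the continuity equation is linear'' and then reads off (ii) from the general fact (referenced to \cite{cetraro}) that time marginals of a generalized RLF in $\R^d$ solve \eqref{contieq}. You instead establish (ii) first --- integrating the distributional identity for each $\omega$ against $\eeta$, which is precisely the linearity step --- and only then manufacture an object playing the role of $\E\eeta$ by a single application of the superposition principle to the aggregate family $\mu_t$.

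Two remarks on the comparison. First, your construction of $\E\eeta$ is not canonical: superposition yields \emph{some} lift of $(\mu_t)$ to a measure on trajectories, not a unique one, so the symbol $\E\eeta$ is somewhat abusive in your route (as it arguably is in the paper's, since no explicit definition is ever given). This is harmless, because the only use of the proposition later (in Theorem~\ref{tuniflow}) is through (ii) and the $L^1\cap L^\infty$ bound, both of which you obtain directly without any appeal to superposition; in that sense your derivation of the parts that matter is cleaner. Second, your concern about the hypotheses of Theorem~8.2.1 in \cite{amgisa} is legitimate: the standard statement asks for global integrability of $\int|\bb_t|\,d\mu_t$ in time, not merely local. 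The cutoff-and-glue workaround you sketch is viable (and here you at least have $\mu_t\le C\Leb{d}$, which helps), but since (i) is never used downstream you could also simply omit it, or note that it holds under the additional growth/integrability hypotheses that are in force in all later applications.

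One small correction: in your second paragraph write ``for $\eeta$-a.e.\ $(\mu,\omega)$'' rather than ``each $\omega$ in $\supp\eeta$''; concentration in Definition~\ref{GRLflowmis}(ii) is an almost-everywhere statement, not a support statement.
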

\begin{proof} Statement (i) is easy to prove, since the continuity equation is linear.
Statement (ii), namely that (single) time marginals of generalized flows in
$\R^d$ solve \eqref{contieq}, is proved in detail in
\cite[Page 8]{cetraro}. The final statement follows by the regularity
condition on $\eeta$.
\end{proof}

\section{Existence and uniqueness of regular Lagrangian flows}

In this section we recall the main existence and uniqueness results
of the $\nu$-RLF in $\R^d$, and see their extensions to $\nnu$-RLF
in $\Probabilities{\R^d}$. It turns out that existence and
uniqueness of solutions to \eqref{contieqw} in
$L^\infty_+\bigl([0,T];L^1\cap L^\infty(\R^d)\bigr)$ yields
existence and uniqueness of the $\nu$-RLF, and existence of this
flow implies existence of the $\nnu$-RLF when $\nnu$ is regular.
Also, the (apparently stronger) uniqueness of the $\nnu$-RLF is
still implied by the uniqueness of solutions to \eqref{contieqw} in
$L^\infty_+\bigl([0,T];L^1\cap L^\infty(\R^d)\bigr)$.

The following result is proved in \cite[Theorem~19]{cetraro} for the
part concerning existence and in \cite[Theorem~16,
Remark~17]{cetraro} for the part concerning uniqueness.

\begin{theorem}[Existence and uniqueness of the $\nu$-RLF in
$\R^d$]\label{texirlfrd} Assume that \eqref{contieqw} has existence
and uniqueness in $L^\infty_+\bigl([0,T];L^1\cap
L^\infty(\R^d)\bigr)$. Then, for all $\nu\in {\mathscr M}(\R^d)$
with $\nu\ll\Leb{d}$ and bounded density the $\nu$-RLF in $\R^d$
exists and is unique.
\end{theorem}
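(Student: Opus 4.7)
The plan is to deduce the theorem from the assumed well-posedness of \eqref{contieqw} via Ambrosio's superposition principle plus a measurable-selection argument. For \textbf{existence}, set $\bar w := d\nu/d\Leb{d} \in L^1 \cap L^\infty(\R^d)$. The existence hypothesis yields a solution $w \in L^\infty_+([0,T]; L^1\cap L^\infty(\R^d))$ of \eqref{contieqw} with $w_0 = \bar w$, and by the superposition principle this lifts to a probability measure $\eeta \in \Probabilities{\R^d \times C([0,T];\R^d)}$ concentrated on pairs $(x,\gamma)$, with $\gamma$ an absolutely continuous integral curve of $\bb$ satisfying $\gamma(0)=x$, and with $(e_0)_\sharp \eeta = \nu$, $(e_t)_\sharp \eeta = w_t \Leb{d}$. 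Thus $\eeta$ is a generalized $\nu$-RLF (Definition~\ref{GRLflow}), and it remains to promote it to a classical flow.

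Disintegrate $\eeta$ against its first marginal, writing $\eeta = \int \delta_x \otimes \eeta_x\,d\nu(x)$ with $\eeta_x \in \Probabilities{C([0,T];\R^d)}$. The \textbf{key claim} is that $\eeta_x$ is a Dirac mass for $\nu$-a.e.\ $x$; writing $\eeta_x = \delta_{\gamma_x}$, the map $\XX(t,x) := \gamma_x(t)$ is then the desired $\nu$-RLF. Suppose for contradiction that $A := \{x : \eeta_x \text{ is not Dirac}\}$ has $\nu(A)>0$. Using a countable dense set of times $\{t_k\} \subset [0,T]$ and a measurable selection, one produces a Borel set $\Gamma \subset \R^d\times C([0,T];\R^d)$ such that $x \mapsto \eeta_x(\Gamma_x)$ takes values strictly between $0$ and $1$ on a measurable $B \subset A$ with $\nu(B)>0$. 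Splitting $\eeta \res (B \times C([0,T];\R^d)) = \eeta^1 + \eeta^2$ accordingly produces two generalized flows starting from the same initial measure $\nu \res B$, whose spatial marginals $\mu^i_t := (e_t)_\sharp \eeta^i$ are nonnegative $L^1 \cap L^\infty(\R^d)$ functions (by condition (iii) of Definition~\ref{GRLflow}) solving \eqref{contieqw} from the same initial datum, yet genuinely distinct. This contradicts the assumed uniqueness in $L^\infty_+([0,T]; L^1\cap L^\infty)$, proving the claim.

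For \textbf{uniqueness}, given two $\nu$-RLFs $\XX$ and $\YY$, form the convex combination
\[
\eeta := \tfrac{1}{2}\bigl[(\id \times \XX(\cdot,\cdot))_\sharp \nu + (\id \times \YY(\cdot,\cdot))_\sharp \nu\bigr],
\]
which is again a generalized $\nu$-RLF (the bound (iii) holding with the larger of the two constants $C$). The Dirac-conditional step above forces the disintegration of $\eeta$ to be a single Dirac $\nu$-a.e., whence $\XX(\cdot, x) = \YY(\cdot, x)$ for $\nu$-a.e.\ $x$. The stronger form of uniqueness announced after Definition~\ref{RLflow} (comparing an $f\Leb{d}$-RLF and a $g\Leb{d}$-RLF) then follows by restricting both flows to $\{f>0\}\cap\{g>0\}$: each restriction is a $\min(f,g)\Leb{d}$-RLF, since the density bound in (ii) is preserved under decreasing the initial measure. \textbf{The main obstacle} throughout is executing the measurable splitting so that the two pieces genuinely inherit the density bound in Definition~\ref{GRLflow}(iii) and produce two \emph{distinct} $L^\infty$ time-marginals, since otherwise the contradiction with the well-posedness of \eqref{contieqw} is vacuous; this nonbranching step is precisely the technical heart of the theory in \cite{cetraro}.
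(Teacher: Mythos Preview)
The paper does not actually prove this theorem: it simply cites \cite[Theorem~19]{cetraro} for existence and \cite[Theorem~16, Remark~17]{cetraro} for uniqueness. Your sketch is essentially the argument carried out in that reference, and it is also the finite-dimensional template that the paper follows verbatim when it proves the $\Probabilities{\R^d}$-analogue in Theorem~\ref{tuniflow}.

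There is one genuine slip in your contradiction step. After splitting $\eeta\res\bigl(B\times C([0,T];\R^d)\bigr)$ along $\Gamma$, the two pieces do \emph{not} have the same initial measure: one has $(e_0)_\sharp\eeta^1=\eeta_x(\Gamma_x)\,\nu\res B$ and the other $(e_0)_\sharp\eeta^2=(1-\eeta_x(\Gamma_x))\,\nu\res B$, which differ precisely because $\eeta_x(\Gamma_x)\in(0,1)$. The fix is to pass to a smaller $B$ on which $g(x):=\eeta_x(\Gamma_x)/(1-\eeta_x(\Gamma_x))$ is bounded and to reweight $\eeta^2$ by $g$; then both pieces have initial marginal $\eeta_x(\Gamma_x)\,\nu\res B$, inherit the $L^1\cap L^\infty$ bound from $\eeta$, and are distinct at the time $t_k$ used to build $\Gamma$. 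This is exactly the device with the function $g$ you can see in the paper's proof of Theorem~\ref{tuniflow}, and it is the point you flag as ``the main obstacle'' without quite executing.
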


Now we can easily show that existence of the $\nu$-RLF implies
existence of the $\nnu$-RLF, by a superposition principle. However,
one might speculate that, for very rough vector fields, a $\nnu$-RLF
might exist in $\Probabilities{\R^d}$, not induced by any $\nu$-RLF
in $\R^d$.

\begin{theorem}[Existence of the $\nnu$-RLF in
$\Probabilities{\R^d}$]\label{texirlfprob} Let $\nu\in {\mathscr
M}(\R^d)$ with $\nu\ll\Leb{d}$ and bounded density, and assume that
a $\nu$-RLF $\XX$ in $\R^d$ exists. Then, for all
$\nnu\in\Measuresp{\Probabilities{\R^d}}$ with $\E\nnu=\nu$, a
$\nnu$-RLF $\mmu$ in $\Probabilities{\R^d}$ exists, and it is given
by
\begin{equation}\label{realRLF}
\mmu(t,\mu):=\int_{\R^d}\delta_{\sxX(t,x)}\,d\mu(x).
\end{equation}
\end{theorem}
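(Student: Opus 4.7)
The plan is to verify directly that the push-forward formula $\mmu(t,\mu)=\XX(t,\cdot)_\sharp\mu$, equivalent to \eqref{realRLF}, fulfills the two requirements of Definition~\ref{RLflowmis}. The key mechanism is the identity $\E\nnu=\nu$ combined with $\nu\ll\Leb{d}$ with bounded density: whenever $N\subset\R^d$ is $\nu$-negligible, Fubini gives $\int\mu(N)\,d\nnu(\mu)=\nu(N)=0$, so $\mu(N)=0$ for $\nnu$-a.e.\ $\mu$. This lets me transfer $\nu$-a.e.\ properties of the underlying flow $\XX$ to $\mu$-a.e.\ properties valid for $\nnu$-a.e.\ $\mu$.

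First I would verify condition (ii). For any nonnegative bounded Borel $\phi:\R^d\to\R$,
\[
\int\phi\,d\E(\mmu(t,\cdot)_\sharp\nnu)
=\int_{\PR(\R^d)}\int\phi(y)\,d\mmu(t,\mu)(y)\,d\nnu(\mu)
=\int\phi(\XX(t,x))\,d\E\nnu(x)
=\int\phi\,d\XX(t,\cdot)_\sharp\nu,
\]
and Definition~\ref{RLflow}(ii) bounds the last expression by $C\int\phi\,d\Leb{d}$. As a by-product, Fubini then yields $|\bb|\in L^1_{\mathrm{loc}}\bigl((0,T)\times\R^d;\mu_t\,dt\bigr)$ for $\nnu$-a.e.\ $\mu$, since integrating this local $L^1$ norm against $\nnu$ reduces to the integral of $|\bb|$ against $C\Leb{d}\,dt$, which is finite by the standing assumption $\bb\in L^1_{\mathrm{loc}}$.

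Next I would verify condition (i). Let $N\subset\R^d$ be the $\nu$-negligible set outside of which $\XX(\cdot,x)$ is an absolutely continuous integral solution of \eqref{ODE} with $\XX(0,x)=x$. By the transfer above, $\mu(N)=0$ for $\nnu$-a.e.\ $\mu$, and for any such $\mu$ one has $\mmu(0,\mu)=\int\delta_x\,d\mu(x)=\mu$. Applying the chain rule along each trajectory $s\mapsto\XX(s,x)$, integrating against $\mu$, and swapping orders by Fubini (justified by the step above) gives
\[
\int\phi\,d\mmu(t,\mu)-\int\phi\,d\mu=\int_0^t\int\n\phi\cdot\bb_s\,d\mmu(s,\mu)\,ds
\qquad\forall\,\phi\in C^\infty_c(\R^d),
\]
which is exactly the integrated (hence distributional) form of \eqref{contieq}. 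Weak continuity of $t\mapsto\mmu(t,\mu)$ is then immediate from dominated convergence applied to $t\mapsto\int\phi(\XX(t,x))\,d\mu(x)$ with $\phi\in C_b(\R^d)$.

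The only technical point requiring care -- not so much an obstacle as a checklist item -- is the Borel measurability of the map $\mu\mapsto\mmu(t,\mu)$ from $\Probabilities{\R^d}$ to itself, which is implicitly needed to make sense of $\mmu(t,\cdot)_\sharp\nnu$ in condition (ii). After possibly modifying $\XX(t,\cdot)$ on a $\nu$-negligible set to ensure Borel measurability everywhere (which is compatible with Remark~\ref{invariaflow}), this follows by a standard monotone-class argument from the measurability of $\mu\mapsto\int\phi(\XX(t,\cdot))\,d\mu$ for $\phi\in C_b(\R^d)$.
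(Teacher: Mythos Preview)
Your proposal is correct and follows essentially the same approach as the paper: both verify (ii) by the computation $\E(\mmu(t,\cdot)_\sharp\nnu)=\XX(t,\cdot)_\sharp\nu\leq C\Leb{d}$, and both verify (i) via the Fubini-type transfer ``$\nu$-a.e.\ implies $\mu$-a.e.\ for $\nnu$-a.e.\ $\mu$'' to pass from the ODE for $\XX(\cdot,x)$ to the continuity equation for $\mmu(\cdot,\mu)$. Your version is simply more explicit on technical bookkeeping (the local integrability condition on $|\bb|$, weak continuity in $t$, and Borel measurability of $\mu\mapsto\mmu(t,\mu)$) that the paper leaves implicit.
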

\begin{proof}
The first part of property (i) in Definition~\ref{RLflowmis} is
obviously satisfied, since the fact that $t\mapsto\XX(t,x)$ solves
the ODE for some $x$ corresponds to the fact that
$t\mapsto\delta_{\sxX(t,x)}$ solves \eqref{contieq}. On the other
hand, since $\nnu$ is regular and $\XX$ is a RLF, we know that
$\XX(\cdot,x)$ solves the ODE for $\E\nnu$-a.e. $x$; it follows
that, for $\nnu$-a.e. $\mu$, $\XX(\cdot,x)$ solves the ODE for
$\mu$-almost every $x$, hence $\mmu(t,\mu)$ solves \eqref{contieq}
for $\nnu$-a.e. $\mu$. This proves (i).

Property (ii) follows by
\begin{eqnarray*}
\int_{\R^d} \phi(x)\,d\E(\mmu(t,\cdot)_\sharp\nnu)(x)&=&
\int_{{\mathscr P}(\R^d)}\int_{\R^d} \phi
d\mmu(t,\mu)\,d\nnu(\mu)\\&=& \int_{{\mathscr P}(\R^d)}\int_{\R^d}
\phi(\XX(t,x))\,d\mu(x)\,d\nnu(\mu)\\&=&
\int_{\R^d}\phi(\XX(t,x))\,d\nu(x)\leq CL\int_{\R^d}\phi(z)\,dz
\end{eqnarray*}
where $C$ is the same constant in Definition~\ref{RLflow}(ii) and
$L$ satisfies $\nu\leq L\Leb{d}$.
\end{proof}

The following lemma (a slight refinement of
\cite[Theorem~5.1]{ambrosio} and of \cite[Lemma~4.6]{amfiga})
provides a simple characterization of Dirac masses for measures on
$C_w\bigl([0,T];E\bigr)$ and for families of measures on $E$. Here
$E$ is a closed, convex and bounded subset of the dual of a
separable Banach space, endowed with a distance $d_E$ inducing the
weak$^*$ topology, so that $(E,d_E)$ is a compact metric space;
$C_w([0,T];E)$ denotes the space of continuous maps with values in
$(E,d_E)$, endowed with sup norm (so that these maps are continuous
with respect to the weak$^*$ topology). We shall apply this result
in the proof of Theorem \ref{tuniflow} with
\begin{equation}\label{defE}
E:=\left\{\mu\in\Measures{\R^d}:\ |\mu|(\R^d)\leq 1\right\}
\supset\Probabilities{\R^d},
\end{equation}
thought as a subset of $\bigl(C_0(\R^d)\bigr)^*$, where
$C_0(\R^d)$ denotes the set of continuous functions vanishing at
infinity (i.e. the closure of $C_c(\R^d)$ with respect to the
uniform convergence).

\begin{lemma}\label{simple}
Let $E\subset G^*$, with $G$ separable Banach space, be closed,
convex and bounded, and let $\ssigma$ be a positive finite measure
on $C_w\bigl([0,T];E\bigr)$. Then $\ssigma$ is a Dirac mass if and
only if $(e_t)_\sharp\ssigma$ is a
Dirac mass for all $t\in\Q\cap [0,T]$. \\
If $(F,{\cal F},\lambda)$ is a measure space, and a Borel family
$\{\nu_z\}_{z\in F}$ of probability measures on $E$ (i.e.
$z\mapsto\nu_z(A)$ is ${\cal F}$-measurable in $F$ for all $A\subset
E$ Borel) is given, then $\nu_z$ are Dirac masses for $\lambda$-a.e.
$z\in F$ if and only if for all $y\in G$ and $c\in\R$ there holds
\begin{equation}\label{orthomu}
\nu_z(\{x\in E:\ \langle x,y\rangle\leq c\})\nu_z(\{x\in E:\ \langle
x,y\rangle>c\})=0\quad\text{for $\lambda$-a.e. $z\in F$.}
\end{equation}
\end{lemma}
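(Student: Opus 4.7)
The proof splits naturally along the two assertions, and both reduce to exploiting separability and (weak$^*$-)continuity so that a continuum of constraints can be reduced to countably many null sets.

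For the first statement the ``only if'' direction is immediate, so I focus on the converse. Let $T_0 := \Q \cap [0,T]$. By assumption, for each $t \in T_0$ there is a unique $\mu_t \in E$ such that $(e_t)_\sharp \ssigma = \ssigma(C_w([0,T];E)) \, \delta_{\mu_t}$, which gives a $\ssigma$-null set $N_t = \{\omega : \omega(t) \neq \mu_t\}$. Setting $N := \bigcup_{t \in T_0} N_t$, still $\ssigma$-null by countability of $T_0$, we get that for every $\omega \notin N$, the trajectory $\omega$ agrees with the candidate path $t \mapsto \mu_t$ (a priori defined only on $T_0$) on a dense subset of $[0,T]$. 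Since $\omega \in C_w([0,T];E)$ is continuous in the weak$^*$ topology of $E$ and $T_0$ is dense, $\omega$ is uniquely determined on all of $[0,T]$ by its values on $T_0$. Hence $\ssigma$ is concentrated on this single continuous path.

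For the second statement, again the necessity is clear. For sufficiency, choose a countable dense set $\{y_n\}_{n \geq 1} \subset G$ and apply the hypothesis \eqref{orthomu} with $y = y_n$ and $c$ ranging over $\Q$; the countable union of the exceptional $\lambda$-null sets is again $\lambda$-null. Fix $z$ outside this null set. For every $n$, the cumulative distribution function $c \mapsto F_n(c) := \nu_z(\{x : \langle x, y_n \rangle \leq c\})$ satisfies $F_n(c)(1 - F_n(c)) = 0$ for all rational $c$; being non-decreasing and right-continuous and taking values in $\{0,1\}$ on a dense set, $F_n$ is a Heaviside function, so the law of $\langle \cdot, y_n \rangle$ under $\nu_z$ is a Dirac mass at some $a_n(z) \in \R$. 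Intersecting a further countable family of $\nu_z$-null sets (one for each $n$), we get a $\nu_z$-conull set on which $\langle x, y_n \rangle = a_n(z)$ holds simultaneously for all $n$.

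Now if $x, x' \in E$ both belong to this conull set, then $\langle x - x', y_n \rangle = 0$ for every $n$, and by density of $\{y_n\}$ in $G$ together with the continuity of $y \mapsto \langle x - x', y \rangle$ on $G$, we conclude $x = x'$ in $G^*$. Thus $\nu_z$ is supported on a single point, i.e. a Dirac mass, for $\lambda$-a.e. $z$. The main delicate point is the step where $F_n \in \{0,1\}$ on the rationals is upgraded to $F_n$ being a Heaviside function, which is what allows the density/separability argument to close; everything else is a routine countable exhaustion.
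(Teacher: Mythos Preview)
Your proof is correct and follows essentially the same approach as the paper: reduce to countably many null sets via a countable dense subset of $[0,T]$ for the first part, and via countable dense subsets of $G$ and $\R$ for the second, then use weak$^*$ continuity (respectively, the point-separating property of the $\langle\cdot,y_n\rangle$) to conclude. Your explicit Heaviside/CDF argument makes precise the step the paper summarizes as ``the function $x\mapsto\langle x,y_i\rangle$ is equivalent to a constant,'' but the strategy is the same.
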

\begin{proof}
The first statement is a direct consequence of the fact that all
elements of $C_w\bigl([0,T];E\bigr)$ are weakly$^*$ continuous maps,
which are uniquely determined on $\Q\cap [0,T]$. In order to prove
the second statement, let us consider the sets $A_{ij}:=\{x\in E:\
\langle x,y_i\rangle\leq c_j\}$, where $y_i$ vary in a countable
dense set of $G$ and $c_j$ are an enumeration of the rational
numbers. By \eqref{orthomu} we obtain a $\lambda$-negligible set
$N_{ij}\in {\cal F}$ satisfying $\nu_z(A_{ij})\nu_z(E\setminus
A_{ij})=0$ for all $z\in F\setminus N_{ij}$. As a consequence, each
measure $\nu_z$, as $z$ varies in $F\setminus N_{ij}$, is either
concentrated on $A_{ij}$ or on its complement. For $z\in
F\setminus\cup_j N_{ij}$ it follows that the function
$x\mapsto\langle x,y_i\rangle$ is equivalent to a constant, up to
$\nu_z$-negligible sets. Since the functions $x\mapsto \langle
x,y_i\rangle $ separate points of $E$, $\nu_z$ is a Dirac mass for
all $z\in F\setminus\cup_{i,j}N_{ij}$ as desired.
\end{proof}

The next result shows that uniqueness of \eqref{contieq} in
$L^\infty_+\bigl([0,T];L^1\cap L^\infty(\R^d)\bigr)$ and existence
of a generalized $\nnu$-RLF imply existence of the $\nnu$-RLF and
uniqueness of both, the $\nnu$-RLF and the generalized $\nnu$-RLF.

\begin{theorem}[Existence and uniqueness of the $\nnu$-RLF in
$\Probabilities{\R^d}$]\label{tuniflow} Assume that \eqref{contieqw}
has uniqueness in $L^\infty_+\bigl([0,T];L^1\cap
L^\infty(\R^d)\bigr)$. If a generalized $\nnu$-RLF in
$\Probabilities{\R^d}$ $\eeta$ exists, then the $\nnu$-RLF $\mmu$ in
$\Probabilities{\R^d}$ exists. Moreover they are both unique, and
related as in \eqref{induced}, \eqref{induced1}.
\end{theorem}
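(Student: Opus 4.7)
The plan is to establish a one-to-one correspondence between $\nnu$-RLFs in $\Probabilities{\R^d}$ and generalized $\nnu$-RLFs via \eqref{induced}--\eqref{induced1}. The easy half, that every $\nnu$-RLF $\mmu$ yields a generalized $\nnu$-RLF $(\Psi_\smmu)_\sharp\nnu$, is already recorded in the paragraph preceding \eqref{induced}. The substance of the theorem is the reverse direction: any generalized $\nnu$-RLF $\eeta$ is concentrated on single curves. Disintegrating $\eeta$ with respect to its first marginal $(e_0)_\sharp\eeta = \nnu$ produces a measurable family $\{\eeta_\mu\}$ of probability measures on $\Path{T}{\R^d}$, each concentrated on solutions of \eqref{contieq} starting at $\mu$; the goal is to prove that $\eeta_\mu$ is a Dirac mass for $\nnu$-a.e.\ $\mu$, so that $\mmu(\cdot,\mu)$ can be read off as the supporting curve, and properties (i)--(ii) of Definition~\ref{RLflowmis} follow from those of $\eeta$.

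For the Dirac property I apply Lemma~\ref{simple} with $E$ as in \eqref{defE}, viewed as a compact subset of the dual of $G = C_0(\R^d)$. Its two parts reduce the claim to verifying, for every triple $(t,y,c) \in (\Q\cap[0,T]) \times C_0(\R^d) \times \R$, the orthogonality identity
$$
\eeta_\mu\bigl(\{\omega:\ \langle \omega(t), y\rangle \le c\}\bigr)\cdot \eeta_\mu\bigl(\{\omega:\ \langle \omega(t), y\rangle > c\}\bigr) = 0 \qquad\text{for $\nnu$-a.e.\ } \mu.
$$
Arguing by contradiction, assume this fails on some Borel set $F\subset\Probabilities{\R^d}$ with $\nnu(F)>0$; write $g^1(\mu) := \eeta_\mu(\{\langle\omega(t),y\rangle\le c\})$ and $g^2 := 1-g^1$, both strictly positive on $F$. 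I split $\eeta\res(F\times\Path{T}{\R^d})$ into $\tilde\eeta^1, \tilde\eeta^2$ according to whether $\langle\omega(t),y\rangle \le c$ or $>c$, and rescale each by the bounded Borel factor $(g^1\wedge g^2)(\mu)/g^i(\mu)$ to form $\hat\eeta^1,\hat\eeta^2$. A direct computation then shows that both $\hat\eeta^i$ share the common expected initial datum
$$
\bar w_0 := \E((e_0)_\sharp\hat\eeta^i) = \int_F (g^1\wedge g^2)(\mu)\,\mu\,d\nnu(\mu),
$$
which is a nonzero element of $L^1\cap L^\infty(\R^d)$ since $\bar w_0\le \E\nnu\le C\Leb{d}$ by regularity of $\nnu$.

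The expected marginals $w^i_s := \E((e_s)_\sharp\hat\eeta^i)$ are, as in the proof of Proposition~\ref{pindco}, nonnegative distributional solutions of \eqref{contieqw} starting at $\bar w_0$, and the bound $w^i_s \le \E((e_s)_\sharp\eeta) \le C\Leb{d}$ places them in $L^\infty_+([0,T]; L^1\cap L^\infty(\R^d))$. The uniqueness hypothesis therefore forces $w^1_s = w^2_s$ for all $s \in [0,T]$. At time $t$, however, the construction of $\hat\eeta^i$ yields
$$
\int_{\R^d} y\,dw^1_t \le c\,w^1_t(\R^d)\qquad\text{and}\qquad \int_{\R^d} y\,dw^2_t > c\,w^2_t(\R^d),
$$
with $w^i_t(\R^d) = |\hat\eeta^i| = \bar w_0(\R^d) > 0$, yielding the impossible strict inequality $c\bar w_0(\R^d) > c\bar w_0(\R^d)$. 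This establishes the Dirac property, and hence the existence of $\mmu$ together with the relation $\eeta = (\Psi_\smmu)_\sharp\nnu$. Uniqueness of both $\mmu$ and $\eeta$ follows at once: two candidate $\nnu$-RLFs (or two generalized $\nnu$-RLFs) would produce, on taking the half-sum, another generalized $\nnu$-RLF whose disintegration is an average of two distinct Dirac measures, contradicting the Dirac result just proved. The main obstacle is precisely the splitting-and-rescaling step: one must arrange the two sub-flows $\hat\eeta^1, \hat\eeta^2$ to have \emph{identical} expected initial data in $L^1\cap L^\infty(\R^d)$ despite the splitting being defined in terms of the time-$t$ configuration, which is what enables the uniqueness hypothesis for \eqref{contieqw} to bite.
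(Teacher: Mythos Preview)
Your proof is correct and follows the same strategy as the paper's: disintegrate $\eeta$ over its first marginal, reduce via Lemma~\ref{simple} to the orthogonality condition \eqref{orthomu}, and derive a contradiction by splitting the flow at a fixed time into two pieces that yield distinct $L^\infty_+([0,T];L^1\cap L^\infty)$ solutions with the same initial datum; the uniqueness part via convex combination is also identical. The one technical difference is your choice of rescaling: the paper multiplies only the second piece by $g(\mu)=\theta_\mu(A)/\theta_\mu(\Probabilities{\R^d}\setminus A)$ and must first pass to a smaller set $L$ on which $g$ is bounded, whereas your symmetric weights $(g^1\wedge g^2)/g^i\le 1$ make the sub-flows automatically dominated by $\eeta$ and avoid that localization step---a slightly cleaner bookkeeping device, but not a different argument.
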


\begin{proof} We fix a generalized $\nnu$-RLF $\eeta$ and we show
first that $\eeta$ is induced by a $\nnu$-RLF (this will prove in
particular the existence of the $\nnu$-RLF). To this end, denoting
by
$\pi:\Probabilities{\R^d}\times\Path{T}{\R^d}\to\Probabilities{\R^d}$
the projection on the first factor, we define by
$$
\eeta_\mu:=\E(\eeta|\pi=\mu)\in\Probabilities{\Path{T}{\R^d}}
$$
the induced conditional probabilities, so that
$d\eeta(\mu,\omega)=d\eeta_\mu(\omega)d\nnu(\mu)$. Taking into
account the first statement in Lemma~\ref{simple}, it suffices to
show that, for $\bar t\in\Q\cap [0,T]$ fixed, the measures $$
\theta_\mu:=\E((e_{\bar t})_\sharp\eeta|\omega(0)=\mu)=(e_{\bar
t})_\sharp\eeta_\mu\in\Measuresp{\Probabilities{\R^d}}$$ are Dirac
masses for $\nnu$-a.e. $\mu\in\Probabilities{\R^d}$. Still using
Lemma~\ref{simple}, we will check the validity of \eqref{orthomu}
with $\lambda=\nnu$. Since $\theta_\mu=\delta_\mu$ when $\bar
t=0$, we shall assume that $\bar t>0$.

Let us argue by contradiction, assuming the existence of
$L\in\BorelSets{\Probabilities{\R^d}}$ with $\nnu(L)>0$, $\phi\in
C_0(\R^d)$, $c\in\R$ such that both $\theta_\mu(A)$ and
$\theta_\mu\bigl(\Probabilities{\R^d}\setminus A\bigr)$ are strictly
positive for all $\mu\in L$, with
$$
A:=\left\{\rho\in\Probabilities{\R^d}:\ \int_{\R^d} \phi\,d\rho\leq
c\right\}.
$$
We will get a contradiction with the assumption that the equation
\eqref{contieqw} is well-posed in $L^\infty_+\bigl([0,T];L^1\cap
L^\infty(\R^d)\bigr)$, building two distinct nonnegative solutions
of the continuity equation with the same initial condition $\bar
w\in L^1\cap L^\infty(\R^d)$. With no loss of generality, possibly
passing to a smaller set $L$ still with positive $\nnu$-measure, we
can assume that the quotient
$g(\mu):=\theta_\mu(A)/\theta_\mu\bigl(\Probabilities{\R^d}\setminus
A\bigr)$ is uniformly bounded in $L$. Let
$\Omega_1\subset\Path{T}{\R^d}$ be the set of trajectories $\omega$
which belong to $A$ at time $\bar t$, and let $\Omega_2$ be its
complement; we can define positive finite measures $\eeta^i$,
$i=1,\,2$, in $\Probabilities{\R^d}\times\Path{T}{\R^d}$ by
$$
d\eeta^1(\mu,\omega):=d(\chi_{\Omega_1}\eeta_\mu)(\omega)
d(\chi_L\nnu)(\mu), \qquad
d\eeta^2(\mu,\omega):=d(\chi_{\Omega_2}\eeta_\mu)(\omega)d(\chi_L
g\nnu)(\mu).
$$
By Proposition~\ref{pindco}, both $\eeta^1$ and $\eeta^2$ induce
solutions $w^1_t,\,w^2_t$ to the continuity equation which are
uniformly bounded (just by comparison with the one induced by
$\eeta$) in space and time. Moreover, since
$$
(e_0)_\sharp\eeta^1=\theta_\mu(A)\chi_L(\mu)\nnu
$$
and analogously
$$
(e_0)_\sharp\eeta^2=\theta_\mu\bigl(\Probabilities{\R^d}\setminus
A\bigr)\chi_L(\mu)g(\mu)\nnu,
$$
our definition of $g$ gives that
$(e_0)_\sharp\eeta^1=(e_0)_\sharp\eeta^2$. Hence, both solutions
$w^1_t$, $w^2_t$ start from the same initial condition $\bar w(x)$,
namely the density of $\E(\theta_\mu(A)\chi_L(\mu)\nnu)$ with
respect to $\Leb{d}$. On the other hand, it turns out that
\begin{eqnarray*}
\int_{\R^d}\phi w^1_{\bar
t}\,dx&=&\int_L\int_{\Omega_1}\int_{\R^d}\phi\,d\omega(\bar{t})\,
d\eeta_\mu(\omega)\,d\nnu(\mu)\\
&=&\int_L\int_{\Path{T}{\R^d}}\chi_A(\omega(\bar{t}))
\int_{\R^d}\phi\,d\omega(\bar{t})\, d\eeta_\mu(\omega)\,d\nnu(\mu)
\\&=&\int_L\int_A\int_{\R^d}\phi\,d\rho\,d\theta_\mu(\rho)\,d\nnu(\mu)\leq
c\int_L\theta_\mu(A)d\nnu(\mu).
\end{eqnarray*}
Analogously, we have
$$\int_{\R^d}\phi w^2_{\bar t}\,dx>c\int_L\theta_\mu
\bigl(\Probabilities{\R^d}\setminus A\bigr)g(\mu)\,d\nnu(\mu)=
c\int_L\theta_\mu(A)\,d\nnu(\mu).
$$
Therefore $w^1_{\bar t}\neq w^2_{\bar t}$ and uniqueness of the
continuity equation is violated.

Now we can prove uniqueness: if $\ssigma$ is any other generalized
$\nnu$-RLF, we know $\ssigma$ is induced by a $\nnu$-RLF, hence for
$\nnu$-a.e. $\mu$ also the measures $\E(\ssigma|\omega(0)=\mu)$ are
Dirac masses; but, since the property of being a generalized flow is
stable under convex combinations, also the measures (corresponding
to the generalized $\nnu$-RLF $(\eeta+\ssigma)/2$)
$$
\frac{1}{2}\E(\eeta|\omega(0)=\mu)+
\frac{1}{2}\E(\ssigma|\omega(0)=\mu)=
\E\biggl(\frac{\eeta+\ssigma}{2}|\omega(0)=\mu\biggr)
$$
must be Dirac masses for $\nnu$-a.e. $\mu$. This can happen only if
$\E(\eeta|\omega(0)=\mu)=\E(\ssigma|\omega(0)=\mu)$ for $\nnu$-a.e.
$\mu$, hence $\ssigma=\eeta$. Finally, since distinct $\nnu$-RLF
$\mmu$ and $\mmu'$ induce distinct generalized $\nnu$-RLF $\eeta$
and $\eeta'$, uniqueness is proved also for $\nnu$-RLF.
\end{proof}

\section{Stability of the $\nnu$-RLF in
$\Probabilities{\R^d}$}\label{sstable}

In the statement of the stability result we shall consider varying
measures $\nnu_n\in\Probabilities{\Probabilities{\R^d}}$, $n\geq 1$,
and a limit measure $\nnu$. (The assumption that all $\nnu_n$ are
probability measures is made in order to avoid technicalities which
would obscure the main ideas behind our stability result, and one
can always reduce to this case by renormalizing the measures.
Moreover, in the applications we have in mind, our measures $\nnu_n$
will always have unitary total mass.) We shall assume that the
$\nnu_n$ are generated as $(i_n)_\sharp\P$, where $(W,{\mathcal
F},\P)$ is a probability measure space and
$i_n:W\to\Probabilities{\R^d}$ are measurable; accordingly, we shall
also assume that $\nnu=i_\sharp\P$, with $i_n\to i$ $\P$-almost
everywhere. These assumptions are satisfied in the applications we
have in mind, and in any case Skorokhod's theorem (see \cite[\S8.5,
Vol. II]{bogachevII}) could be used to show that weak convergence of
$\nnu_n$ to $\nnu$ always implies this sort of representation, even
with $W=[0,1]$ endowed with the standard measure structure, for
suitable $i_n,\,i$.

Many formulations of the stability result are indeed possible and we
have chosen one specific for the application we have in mind.
Henceforth we fix an autonomous vector field $\bb:\R^d\to\R^d$
satisfying the following regularity conditions:

\begin{itemize}
\item[(a)] $d=2n$ and $\bb(x,p)=(p,\cc(x))$, $(x,p)\in\R^d$,
$\cc:\R^n\to\R^n$ Borel and locally integrable;
\item[(b)] there
exists a closed $\Leb{n}$-negligible set $S$ such that $\cc$ is
locally bounded on $\R^n\setminus S$;
\item[(c)] the discontinuity set $\Sigma$ of $\cc$ is
$\Leb{n}$-negligible.
\end{itemize}

\begin{lemma}\label{lremsing}
Let $S\subset\R^n$ closed, and assume that $\bb$ is representable as
in (a) above. Let $\mu_t:[0,T]\to\Probabilities{\R^d}$ be solving
\eqref{contieq} in the sense of distributions in $(\R^n\setminus
S)\times\R^n$ and assume that
$$
\int_0^T\int_{B_R}\frac{1}{{\rm
dist}^\beta(x,S)}\,d\mu_t(x,p)dt<\infty\qquad\forall\, R>0
$$
for some $\beta>1$ (with the convention $1/0=+\infty$). Then
\eqref{contieq} holds in the sense of distributions in $\R^d$.
\end{lemma}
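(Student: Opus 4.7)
\noindent The plan is to test the distributional identity on $(\R^n\setminus S)\times\R^n$ against a smooth cutoff supported away from $S\times\R^n$, and then let the cutoff relax to the identity, controlling the resulting commutator through the quantitative integrability hypothesis. The algebraic point that makes this strategy work is that, since $\bb(x,p)=(p,\cc(x))$ and the cutoff can be chosen to depend only on $x$, the Leibniz commutator $\bb\cdot\nabla\chi_\delta$ reduces to $p\cdot\nabla_x\chi_\delta$ and therefore does \emph{not} involve the singular field $\cc$; only the linear, bounded velocity $p$ enters, and it can be handled by the $d_S^{-\beta}$ estimate.

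Concretely, I would fix $\chi\in C^\infty(\R;[0,1])$ with $\chi\equiv 0$ on $[0,1]$, $\chi\equiv 1$ on $[2,\infty)$ and $|\chi'|\leq 2$, and set $\chi_\delta(x):=\chi(d_S(x)/\delta)$, with $d_S(x):=\operatorname{dist}(x,S)$ (mollifying $d_S$ slightly if one insists on $\chi_\delta\in C^\infty$). Then $|\nabla\chi_\delta|\leq 2/\delta$ is supported in the annulus $A_\delta:=\{\delta\leq d_S\leq 2\delta\}$, and for any $\phi\in C^\infty_c((0,T)\times\R^d)$ the product $\phi\chi_\delta$ is an admissible test function on $(\R^n\setminus S)\times\R^n$, yielding
\begin{equation}\label{eq:cutident}
\int_0^T\!\!\int_{\R^d}\chi_\delta\bigl[\partial_t\phi+p\cdot\nabla_x\phi+\cc(x)\cdot\nabla_p\phi\bigr]\,d\mu_t\,dt
=-\int_0^T\!\!\int_{\R^d}\phi\,(p\cdot\nabla_x\chi_\delta)\,d\mu_t\,dt.
\end{equation}
Picking $R$ with $\operatorname{supp}\phi\subset[0,T]\times B_R$ and using that $d_S^{-\beta}\geq(2\delta)^{-\beta}$ on $A_\delta$, one has $\mu_t\otimes dt\bigl(A_\delta\cap B_R\bigr)\leq(2\delta)^\beta\!\int_0^T\!\!\int_{B_R}d_S^{-\beta}\,d\mu_t\,dt$; together with the bound $|p\cdot\nabla_x\chi_\delta|\leq 2R/\delta$ on $\operatorname{supp}\phi$, the right-hand side of \eqref{eq:cutident} is $O(\delta^{\beta-1})$, which vanishes as $\delta\downarrow 0$ precisely because $\beta>1$.

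It remains to pass to the limit in the left-hand side of \eqref{eq:cutident}. The quantitative hypothesis forces the set $\{d_S=0\}\cap(B_R\times[0,T])$ to be $\mu_t\otimes dt$-null (since $d_S^{-\beta}\equiv+\infty$ there has finite integral), so $\chi_\delta\nearrow 1$ $\mu_t\otimes dt$-a.e.\ on $\operatorname{supp}\phi$. Dominated convergence disposes of the $\partial_t\phi$ and $p\cdot\nabla_x\phi$ terms immediately, as both integrands are uniformly bounded there. The delicate term is $\chi_\delta\,\cc\cdot\nabla_p\phi$, and this is where I expect the main obstacle: a priori $\cc$ need not be $\mu_t\otimes dt$-locally integrable on all of $\R^d$, so one cannot simply invoke dominated convergence. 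To handle it, I would use that \eqref{eq:cutident} already forces the uniform bound $\sup_\delta|\int\chi_\delta\,\cc\cdot\nabla_p\phi\,d\mu_t\,dt|\leq C(\phi)$; specializing to tensor-product test functions $\phi(t,x,p)=\psi(t,x)\alpha(p)$ with $\partial_{p_i}\alpha$ of prescribed sign on a fixed ball in $p$-space, a monotone-convergence argument promotes this uniform bound to local integrability $\cc\in L^1_{\mathrm{loc}}(d\mu_t\,dt)$ on $\R^d$. Dominated convergence then applies, and sending $\delta\downarrow 0$ in \eqref{eq:cutident} delivers the distributional continuity equation on all of $\R^d$.
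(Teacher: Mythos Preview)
Your cutoff argument with $\chi_\delta(x)=\chi(d_S(x)/\delta)$ and the $O(\delta^{\beta-1})$ bound on the commutator term $\int\phi\,p\cdot\nabla_x\chi_\delta\,d\mu_t\,dt$ are exactly the paper's proof (there written with $k=1/\delta$): the paper makes the same observation that only the $p$-component of $\bb$ hits $\nabla\chi_\delta$, derives the same estimate via $d_S^{-\beta}\geq(2\delta)^{-\beta}$ on the annulus, and notes (as you do) that the hypothesis forces $\mu_t(S\times\R^n)=0$ for a.e.\ $t$. The paper then passes to the limit in the remaining terms without further comment.

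You go beyond the paper in isolating the passage to the limit in $\int\chi_\delta\,\cc\cdot\nabla_p\phi\,d\mu_t\,dt$ as a genuine issue---the paper simply does not discuss it. However, your proposed resolution has a gap: arranging $\partial_{p_i}\alpha\geq 0$ does \emph{not} make the integrand $\chi_\delta\,\psi\,c_i\,\partial_{p_i}\alpha$ nonnegative, because $c_i(x)$ itself carries no sign; so monotone convergence does not apply, and a uniform bound on the \emph{signed} integrals $\int\chi_\delta\,\cc\cdot\nabla_p\phi\,d\mu_t\,dt$ does not by itself deliver $\cc\in L^1_{\rm loc}(d\mu_t\,dt)$. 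One would need additional structure (a pointwise bound $|\cc|\lesssim d_S^{-\gamma}$ with $\gamma\leq\beta$, or a duality argument showing the limiting functional is order zero) to close this. In the paper's actual applications this is a non-issue---there $\cc=-\nabla U$ with $|\nabla U_s|\lesssim d_S^{-2}$ and the hypothesis is verified with $\beta=2$, so $|\cc|\in L^1_{\rm loc}(d\mu_t\,dt)$ follows directly---but under assumption~(a) alone the point you raise is left open both in your argument and in the paper's.
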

\begin{proof}
First of all, the assumption implies that $\mu_t(S\times\R^n)=0$ for
$\Leb{1}$-a.e. $t\in (0,T)$. The proof of the global validity of the
continuity equation uses the classical argument of removing the
singularity by multiplying any test function $\phi\in
C^\infty_c(\R^d)$ by $\chi_k$, where $\chi_k(x)=\chi(k{\rm
dist}(x,S))$ and $\chi$ is a smooth cut-off function equal to $0$ on
$[0,1]$ and equal to $1$ on $[2,+\infty)$, with $0\leq\chi'\leq 2$.
If we use $\phi\chi_k$ as a test function, since $\chi_k$ depends on
$x$ only, we can use the particular structure (a) of $\bb$ to write
the term depending on the derivatives of $\chi_k$ as
$$
k\int_0^T\int_{\R^d}\phi\chi'(k{\rm dist}(x,S))\langle p,\nabla{\rm
dist}(x,S)\rangle\,d\mu_t(x,p)dt.
$$
If $K$ is the support of $\phi$, the integral above can be bounded by
$$
2\max_K|p\phi|\int_0^T\int_{\{x\in K: k{\rm dist}(x,S)\leq
2\}}k\,d\mu_t(x,p)dt\leq\frac{2^{\beta+1}\max_K|p\phi|}{k^{\beta-1}}
\int_0^T\int_K\frac{1}{{\rm dist}^\beta(x,S)}\,d\mu_t(x,p)dt
$$
and as $\beta>1$ the right hand side is infinitesimal as $k\to\infty$.
\end{proof}

The following stability result is adapted to the application we have
in mind: we shall apply it to the case when $\mmu_n(t,\mu)$ are
Husimi transforms of wavefunctions.

\begin{theorem}[Stability of the $\nnu$-RLF in
$\Probabilities{\R^d}$]\label{tstable} Let $i_n,\,i$ be as above and
let $\mmu_n:[0,T]\times i_n(W)\to\Probabilities{\R^d}$ be satisfying
$\mmu_n(0,i_n(w))=i_n(w)$ and the following conditions:
\begin{itemize}
\item[(i)] (asymptotic regularity)
$$\limsup_{n\to\infty}\int_W\int_{\R^d}\phi\,d\mmu_n(t,i_n(w))\,d\P(w)\leq
C\int_{\R^d} \phi\,dx
$$
for all $\phi\in C_c(\R^d)$ nonnegative, for some constant $C$
independent of $t$;
\item[(ii)] (uniform decay away from the singularity) for some $\beta>1$
\begin{equation}\label{ali4}
\sup_{\delta>0}\limsup_{n\to\infty}\int_W\int_0^T
\int_{B_R}\frac{1}{{\rm
dist}^\beta(x,S)+\delta}\,d\mmu_n(t,i_n(w))\,dt \,d\P(w)<\infty
\qquad\forall\, R>0;
\end{equation}
\item[(iii)] (space tightness) for all $\delta>0$,
$\P\Bigl(\bigl\{w\in W:\ \sup\limits_{t\in
[0,T]}\mmu_n(t,i_n(w))(\R^d\setminus B_R)>\delta\bigr\}\Bigr)\to 0$
as $R\to\infty$ uniformly in $n$;
\item[(iv)] (time tightness) for $\P$-a.e. $w\in
W$, for all $n\geq 1$ and $\phi\in C^\infty_c(\R^d)$,
$t\mapsto\int_{\R^d}\phi\,d\mmu_n(t,i_n(w))$ is absolutely
continuous in $[0,T]$ and, uniformly in $n$,
$$
\lim_{M\uparrow\infty}\P\biggl(\Bigl\{w\in W:\
\int_0^T\biggl|\biggr(\int_{\R^d}\phi\,d\mmu_n(t,i_n(w))\biggr)'\biggr|\,dt>M\Bigr\}\biggr)=0;
$$
\item[(v)] (limit continuity equation)
\begin{equation}\label{ali6}
\lim_{n\to\infty}\int_W
\biggl|\int_0^T\biggl[\varphi'(t)\int_{\R^d}\phi\,d\mmu_n(t,i_n(w))+\varphi(t)\int_{\R^d}
\langle\bb,\nabla\phi\rangle\,d\mmu_n(t,i_n(w))\biggr]\,dt\biggr|\,d\P(w)=0
\end{equation}
for all $\phi\in C^\infty_c\bigl(\R^d\setminus(S\times\R^n)\bigr)$,
$\varphi\in C^\infty_c(0,T)$.
\end{itemize}
Assume, besides (a), (b), (c) above, that \eqref{contieqw} has
uniqueness in $L^\infty_+\bigl([0,T];L^1\cap L^\infty(\R^d)\bigr)$.
Then the $\nnu$-RLF $\mmu(t,\mu)$ relative to $\bb$ exists, is
unique, and
\begin{equation}\label{cetraro1}
\lim_{n\to\infty} \int_W\sup_{t\in [0,T]}d_{{\mathscr
P}}(\mmu_n(t,i_n(w)),\mmu(t,i(w)))\,d\P(w)=0.
\end{equation}
\end{theorem}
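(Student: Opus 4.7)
The strategy is to lift the data to the level of generalized flows, extract a weak limit by tightness, identify this limit as the unique generalized $\nnu$-RLF via Theorem~\ref{tuniflow}, and then descend to the pathwise convergence \eqref{cetraro1} via Lemma~\ref{lplans}. For each $n$ I would define $J_n:W\to\Probabilities{\R^d}\times\Path{T}{\R^d}$ by $J_n(w):=(i_n(w),\mmu_n(\cdot,i_n(w)))$ and set $\eeta_n:=(J_n)_\sharp\P$, so that the first-factor marginal of $\eeta_n$ equals $\nnu_n$ and $(e_t)_\sharp\eeta_n$ is the law of $w\mapsto\mmu_n(t,i_n(w))$. Hypotheses (iii)--(iv) are precisely the space and time tightness assumptions of Proposition~\ref{ptightness} applied to the projection of $\eeta_n$ onto $\Path{T}{\R^d}$, and tightness of the first-factor marginals is supplied by the $\P$-a.e. convergence $i_n\to i$; Prokhorov's theorem then yields a (non-relabelled) subsequence $\eeta_n\rightharpoonup\eeta$ weakly.

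Next I verify that $\eeta$ is a generalized $\nnu$-RLF in the sense of Definition~\ref{GRLflowmis}. Item (i) follows from $(e_0)_\sharp\eeta_n=\nnu_n\rightharpoonup\nnu$; item (iii) follows from hypothesis (i) by lower semicontinuity of $\mu\mapsto\int\phi\,d\mu$ for $\phi\in C_c(\R^d)_+$ at rational $t$, extended to all $t$ by weak continuity of $t\mapsto(e_t)_\sharp\eeta$. For item (ii), fix $\phi\in C^\infty_c(\R^d\setminus(S\times\R^n))$ and $\varphi\in C^\infty_c(0,T)$, and define on $\Path{T}{\R^d}$
\[
F_{\phi,\varphi}(\omega):=\int_0^T\biggl[\varphi'(t)\int_{\R^d}\phi\,d\omega(t)+\varphi(t)\int_{\R^d}\langle\bb,\nabla\phi\rangle\,d\omega(t)\biggr]\,dt.
\]
By (b), $\bb\cdot\nabla\phi$ is bounded on $\supp\phi$, and by (c) its discontinuity set is contained in the Lebesgue-negligible set $\Sigma$; since item (iii) gives $\E((e_t)_\sharp\eeta)\leq C\Leb{d}$, one obtains $\omega(t)(\Sigma)=0$ for $\eeta$-a.e. $(\mu,\omega)$ and a.e. $t$, hence $|F_{\phi,\varphi}|$ is bounded and $\eeta$-a.e. continuous on paths. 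Applying \eqref{finerco} to $\eeta_n\rightharpoonup\eeta$ together with hypothesis (v), rewritten as $\int|F_{\phi,\varphi}|\,d\eeta_n\to 0$, yields $\int|F_{\phi,\varphi}|\,d\eeta=0$. Running over a countable dense family of test pairs shows that $\eeta$-a.e. path $\omega$ solves the continuity equation on $\R^d\setminus(S\times\R^n)$. Finally, hypothesis (ii), passed to the weak limit via Fatou, provides the uniform integrability needed to apply Lemma~\ref{lremsing} and upgrade the equation to all of $\R^d$; the initial condition $\omega(0)=\mu$ holds $\eeta_n$-a.s. and passes to $\eeta$.

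Theorem~\ref{tuniflow} now produces the unique $\nnu$-RLF $\mmu$ and identifies $\eeta=(\Psi_\smmu)_\sharp\nnu$; in particular $\eeta$ is independent of the extracted subsequence, so the entire sequence $(\eeta_n)$ converges weakly to $\eeta$. To conclude \eqref{cetraro1} I apply Lemma~\ref{lplans} with $X:=\Probabilities{\R^d}$, $Y:=\Path{T}{\R^d}$ equipped with a bounded compatible distance, $f_n:=\mmu_n(\cdot,\cdot)$, $f:=\mmu(\cdot,\cdot)$, $\nu_n:=\nnu_n$, $\nu:=\nnu$, and the Skorokhod representations $i_n,\,i$ already provided: weak convergence $\eeta_n=(Id\times f_n)_\sharp\nnu_n\rightharpoonup(Id\times f)_\sharp\nnu=\eeta$ yields $\mmu_n(\cdot,i_n(w))\to\mmu(\cdot,i(w))$ in $\P$-probability in $\Path{T}{\R^d}$, and boundedness of $d_{{\mathscr P}}$ promotes this to $L^1(\P)$ convergence.

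\emph{Main obstacle.} The delicate step is item (ii) above, i.e. passing from the $\P$-averaged, singularity-avoiding hypothesis (v) to pathwise validity of the continuity equation globally on $\R^d$ for $\eeta$-a.e. $(\mu,\omega)$. Two issues conflict with naive weak convergence simultaneously: the $\bb$-term is discontinuous on the Lebesgue-negligible but generally $\eeta$-non-negligible set $\Sigma$, forcing the use of the refined criterion \eqref{finerco} and of the absolute continuity of $\E((e_t)_\sharp\eeta)$ with bounded density; and the cut-off of $S\times\R^n$ can be removed only through Lemma~\ref{lremsing}, which in turn demands transferring the uniform integrability bound (ii) to the limit measure. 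Once the regularity (i) and the decay (ii) are properly exploited in tandem, both difficulties dissolve.
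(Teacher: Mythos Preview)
Your proposal is correct and follows essentially the same route as the paper: lift to generalized flows $\eeta_n$, apply Proposition~\ref{ptightness} and product tightness to extract a weak limit $\eeta$, verify the three items of Definition~\ref{GRLflowmis} (with the key step being the use of \eqref{finerco} for the functional $F_{\phi,\varphi}$, justified by the $\ssigma$-negligibility of the discontinuity set, followed by Lemma~\ref{lremsing} fed by the Fatou/monotone-limit of hypothesis~(ii)), invoke Theorem~\ref{tuniflow} to identify $\eeta$ uniquely, and conclude via Lemma~\ref{lplans}. Your identification of the ``main obstacle'' matches the paper's treatment exactly.
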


\begin{proof} Let
$(\eeta_n)\subset\Measuresp{\Probabilities{\R^d}\times\Path{T}{\R^d}}$
be induced by $\mmu_n$ pushing forward $\nnu_n=(i_n)_\sharp\P$ via the map
$\mu\mapsto(\mu,\mmu_n(t,\mu))$. Conditions (iii) and (iv)
correspond, respectively, to conditions (i) and (ii) of
Proposition~\ref{ptightness}, hence the marginals of $\eeta_n$ on
$\Path{T}{\R^d}$ are tight; since the first marginals, namely
$\nnu_n$, are tight as well, a simple tightness criterion in product
spaces (see for instance \cite[Lemma~5.2.2]{amgisa}) gives that
$(\eeta_n)$ is tight. We consider a weak limit point $\eeta$ of
$(\eeta_n)$ and prove that $\eeta$ is the unique generalized
$\nnu$-RLF relative to $\bb$; this will give that the whole sequence
$(\eeta_n)$ weakly converges to $\eeta$. Just to simplify notation,
we assume that the whole sequence $(\eeta_n)$ weakly converges to
$\eeta$.

We check conditions (i), (ii), (iii) of
Definition~\ref{GRLflowmis}. First, since $\mmu_n(0,\mu)=\mu$
$\nnu_n$-a.e., we get $(e_0)_\sharp\eeta_n=\nnu_n$, hence
$(e_0)_\sharp\eeta=\nnu$ and condition (i) is satisfied. Second,
we check condition (iii): for $\phi\in C_c(\R^d)$ nonnegative we
have
$$
\int_{\R^d}\phi\,d\E((e_t)_\sharp\eeta_n)=\int_{\R^d}\phi\,
d\E(\mmu(t,\cdot)_\sharp\nnu_n)=
\int_W\int_{\R^d}\phi\,d\mmu_n(t,i_n(w))\,d\P(w)
$$
and we can use assumption (i) to conclude that
\begin{equation}\label{ali5}
\int_{\R^d}\phi\,d\E((e_t)_\sharp\eeta)\leq C\int_{\R^d}\phi\,dz
\qquad\forall\, t\in [0,T],
\end{equation}
so that condition (iii) is fulfilled.

Finally we check condition (ii). Since $\eeta_n$ are concentrated on
the closed set of pairs $(\mu,\omega)$ with $\omega(0)=\mu$, the
same is true for $\eeta$; it remains to show that $\omega(t)$ solves
\eqref{contieq} for $\eeta$-a.e. $(\mu,\omega)$. We shall denote by
$\ssigma\in\Measuresp{\Path{T}{\R^d}}$ the projection of $\eeta$ on
the second factor and prove that \eqref{contieq} holds for
$\ssigma$-a.e. $\omega$.

We fix $\phi\in C^\infty_c\bigl(\R^d\setminus (S\times\R^n)\bigr)$
and $\varphi\in C^\infty_c(0,T)$; we claim that the discontinuity
set of the bounded map
\begin{equation}\label{ali8}
\omega\mapsto
\int_0^T\biggl[\varphi'(t)\int_{\R^d}\phi\,d\omega(t)+\varphi(t)\int_{\R^d}
\langle\bb,\nabla\phi\rangle\,d\omega(t)\biggr]\,dt
\end{equation}
is $\ssigma$-negligible. Indeed, using \eqref{finerco} with $X=\R^d$
this discontinuity set is easily seen to be contained in
\begin{equation}\label{ali66}
\left\{\omega\in\Path{T}{\R^d}:\
\int_0^T\omega(t)(\Sigma\times\R^n)\,dt>0\right\},
\end{equation}
where $\Sigma$ is the discontinuity set of $\cc$. Since
$\Leb{d}(\Sigma\times\R^n)=0$, by assumption (c), for all
$t\in [0,T]$ the inequality \eqref{ali5} gives
$\omega(t)(\Sigma\times\R^n)=0$ for $\ssigma$-a.e. $\omega$; by
Fubini's theorem in $[0,T]\times\Path{T}{\R^d}$
we obtain that the set in \eqref{ali66} is
$\ssigma$-negligible.

Now we write assumption \eqref{ali4} in terms of $\eeta_n$ as
$$
\sup_{\delta>0}\limsup_{n\to\infty} \int_{{\mathscr
P}(\R^d)\times\Path{T}{\R^d}}\int_0^T \int_{B_R}\frac{1}{{\rm
dist}^\beta(x,S)+\delta}\,d\omega(t)\,dt
\,d\eeta_n(\mu,\omega)<\infty \qquad\forall \,R>0,
$$
and take the limit thanks to Fatou's Lemma and the Monotone
Convergence Theorem to obtain
\begin{equation}\label{paola}
\int_{\Path{T}{\R^d}}\int_0^T
\int_{B_R}\frac{1}{{\rm dist}^\beta(x,S)}\,d\omega(t)\,dt
\,d\ssigma(\omega)<\infty \qquad\forall\, R>0.
\end{equation}

Next we write assumption (v) in terms of $\eeta_n$ as
$$
\lim_{n\to\infty} \int_{{\mathscr P}(\R^d)\times\Path{T}{\R^d}}
\zeta\biggl|\int_0^T\biggl[\varphi'(t)\int_{\R^d}\phi\,d\omega(t)+
\varphi(t)\int_{\R^d}\langle\bb,\nabla\phi\rangle\,d\omega(t)\biggr]
\,dt\biggr|\,d\eeta_n(\mu,\omega)=0
$$
with $\z\in C_b\bigl(\Probabilities{\R^d}\times\Path{T}{\R^d}\bigr)$
nonnegative; then, the claim on the continuity of the map in
\eqref{ali8} and \eqref{finerco} with
$X=\Probabilities{\R^d}\times\Path{T}{\R^d}$ allow to conclude that
$$
\int_{{\mathscr P}(\R^d)\times\Path{T}{\R^d}}
\zeta\biggl|\int_0^T\biggl[\varphi'(t)\int_{\R^d}\phi\,d\omega(t)+
\varphi(t)\int_{\R^d}\langle\bb,\nabla\phi\rangle\,d\omega(t)\biggr]
\,dt\biggr|\,d\eeta(\mu,\omega)=0.
$$
Now we fix ${\cal A}\subset C^\infty_c\bigl(\R^d\setminus
(S\times\R^n)\bigr)$, ${\cal B}\subset C^\infty_c(0,T)$ countable
dense, and use the fact that $\z$ is arbitrary to find a
$\ssigma$-negligible set
$N\subset\Path{T}{\R^d}$ such that
$$
\int_0^T\biggl[\varphi'(t)\int_{\R^d}\phi\,d\omega(t)+
\varphi(t)\int_{\R^d}\langle\bb_t,\nabla\phi\rangle\,d\omega(t)\biggr]\,dt=0
\qquad\forall\,\phi\in{\mathcal
A},\,\,\forall\,\varphi\in{\mathcal B}
$$
for all $\omega\notin N$, and by a density argument we conclude that
$\ssigma$ is concentrated on solutions to the continuity equation in
$\R^d\setminus (S\times\R^n)$. By Lemma~\ref{lremsing} and
\eqref{paola} we obtain that $\ssigma$-a.e. the continuity equation
holds globally.

By Theorem~\ref{tuniflow} we know that the $\nnu$-RLF
$\mmu(t,\mu)$ in $\Probabilities{\R^d}$ exists, is unique, and
related to the unique generalized $\nnu$-RLF $\eeta$ as in
\eqref{induced}, \eqref{induced1}. This proves that we have
convergence of the whole sequence $(\eeta_n)$ to $\eeta$. By
applying Lemma~\ref{lplans} with $X=\Probabilities{\R^d}$ and
$Y=\Path{T}{\R^d}$ we conclude that \eqref{cetraro1} holds.
\end{proof}

In the next remark we consider some extensions of this result to the
case when $\bb$ satisfies (a), (b) only, so that no information is
available on the discontinuity set $\Sigma$ of $\cc$.

\begin{remark}\label{rven}
{\rm Assume that $\bb$ satisfies (a), (b) only. Then the conclusion
of Theorem~\ref{tstable} is still valid, provided the asymptotic
regularity condition (i) holds in a stronger form, namely
$$
\int_W\int_{\R^d}\phi\,d\mmu_n(t,i_n(w))\,d\P(w)\leq C\int_{\R^d}
\phi\,dx\qquad\forall\,\phi\in C_c(\R^d),\,\,\phi\geq 0,\,\,n\geq 1
$$
for some constant $C$ independent of $t$. Indeed, assumption (c)
was needed only to pass to the limit, in the weak convergence of
$\eeta_n$ to $\eeta$, with test functions of the form
\eqref{ali8}. But, if the stronger regularity condition above
holds, convergence always holds by a density argument: first one
checks this with $\bb$ continuous and bounded on $\supp\phi$, and
in this case the test function is continuous and bounded; then one
approximates $\bb$ in $L^1$ on $\supp\phi$ by bounded continuous
functions.}\end{remark}

\section{Well-posedness of the continuity equation with a singular
potential}\label{sgoodliou}

In this section we shall assume that $d=2n$ and consider a more particular
class of autonomous and Hamiltonian vector fields $\bb:\R^d\to\R^d$ of the
form
$$
\bb(z)=\bigl(p,-\nabla U(x)\bigr),\qquad z=(x,p)\in\R^n\times\R^n.
$$
Having in mind the application to the convergence of the
Wigner/Husimi transforms in quantum molecular dynamics, we assume
that:
\begin{itemize}
\item[(i)] there exists a closed $\Leb{n}$-negligible set
$S\subset\R^n$ such that $U$ is locally Lipschitz in
$\R^n\setminus S$ and $\nabla U\in BV_{\rm
loc}(\R^n\setminus S;\R^n)$; \item[(ii)] $U(x)\to +\infty$ as $x\to S$.
\item[(iii)] $U$ satisfies
\begin{equation}
\label{eq:local bddness} \esssup_{U(x) \leq M} \frac{|\n
U(x)|}{1+|x|} <\infty \qquad \forall\, M \geq 0.
\end{equation}
\end{itemize}

\begin{theorem} \label{tmain3} Under assumptions (i), (ii), (iii),
the continuity equation \eqref{contieqw} has existence and
uniqueness in $L_+^\infty\bigl([0,T];L^1\cap L^\infty(\R^d)\bigr)$.
\end{theorem}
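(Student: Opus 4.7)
The vector field $\bb(x,p)=(p,-\nabla U(x))$ is Hamiltonian, with Hamiltonian $H(z):=|p|^2/2+U(x)$; on $\R^d\setminus(S\times\R^n)$ it is divergence-free and satisfies $\bb\cdot\nabla H=0$. By (ii), each sublevel set $\{H\le M\}$ lies at positive distance from $S\times\R^n$, hence $\bb\in BV_{\rm loc}$ there; by (iii) together with $|p|\le\sqrt{2M}$ on this set, $|\bb(z)|\le C_M(1+|z|)$. On each sublevel $\bb$ therefore satisfies the standard Ambrosio assumptions for continuity equations (the linear-growth $BV$ framework, a routine extension of Remark~\ref{rgoodb}). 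My plan is to localize \eqref{contieqw} to the sublevel sets of $H$ using the first-integral property $\bb\cdot\nabla H=0$, and invoke the Ambrosio existence and uniqueness statements there.

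\emph{Existence.} Given nonnegative $\bar w\in L^1\cap L^\infty(\R^d)$, I would truncate in energy by $\bar w_M:=\bar w\,\chi_{\{H\le M\}}$. Since the restriction of $\bb$ to the invariant open set $\{H<M\}$ is $BV_{\rm loc}$, divergence-free, and of linear growth, the Ambrosio theory yields a nonnegative $w^M\in L^\infty_+([0,T];L^1\cap L^\infty)$ solving \eqref{contieqw} with initial datum $\bar w_M$, supported in $\{H<M\}$ for every $t$. By the uniqueness statement proved below, the family $(w^M)$ is monotone in $M$, so $w:=\lim_{M\to\infty}w^M$ is a global solution with initial datum $\bar w$, and the $L^\infty$ and $L^1$ bounds pass to the limit because $\bb$ is divergence-free.

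\emph{Uniqueness.} Let $w^1,w^2\in L^\infty_+([0,T];L^1\cap L^\infty)$ be two solutions with the same initial datum $\bar w$, and set $w:=w^1-w^2$. Pick $\theta\in C^\infty(\R)$ with $\theta\equiv 1$ on $(-\infty,1]$ and $\theta\equiv 0$ on $[2,\infty)$, and let $\theta_M(z):=\theta(H(z)/M)$, extended by zero on a neighborhood of $S\times\R^n$ (possible by (ii)). Then $\theta_M$ is locally Lipschitz on $\R^d$ and $\bb\cdot\nabla\theta_M=0$ almost everywhere. Approximating $\theta_M$ by $\theta_M\ast\rho_\eps$ and using $\phi(\theta_M\ast\rho_\eps)$ as a test function in the distributional equation for $w$, one obtains in the limit $\eps\downarrow 0$
\begin{equation*}
\partial_t(\theta_M w)+\nabla\cdot(\bb\,\theta_M w)=w\,(\bb\cdot\nabla\theta_M)=0
\end{equation*}
in the sense of distributions. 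Hence $\theta_M w$ is a solution with vanishing initial datum, supported in $\{H\le 2M\}$ where $\bb$ meets the Ambrosio hypotheses; uniqueness there forces $\theta_M w\equiv 0$. Letting $M\uparrow\infty$, $\theta_M\to 1$ on $\R^d\setminus(S\times\R^n)$, a set of full $\Leb{d}$ measure, whence $w\equiv 0$.

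\emph{Main obstacle.} The delicate step is the Lipschitz product-rule identity that lets the Hamiltonian cutoff commute with the distributional continuity equation. Globally, $\theta_M$ is only \emph{locally} Lipschitz, because $\nabla H=(\nabla U,p)$ has linear growth on $\{H\le 2M\}$ by (iii); I would therefore introduce a further cutoff $\psi_R(|x|)$, establish the identity rigorously for $\theta_M\psi_R w$, and then control the spurious remainder $w\,(\bb\cdot\nabla\psi_R)$ via the $L^1$-tail of $w$ on $\{|x|\ge R\}$, which vanishes as $R\uparrow\infty$. A secondary technicality is the passage from Remark~\ref{rgoodb}, stated for bounded $\bb$, to its well-known extension covering $|\bb|/(1+|z|)\in L^\infty$; in the Hamiltonian setting considered here this corresponds to the Bouchut-type results cited in that remark.
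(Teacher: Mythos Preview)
Your uniqueness argument is essentially the same as the paper's: both exploit that $\bb\cdot\nabla H=0$ to multiply a solution by a smooth function of the Hamiltonian, thereby reducing to a sublevel set where $\bb$ is $BV_{\rm loc}$ with linear growth and Ambrosio's theory applies. The paper executes this slightly more cleanly: instead of arguing that $\theta_M w$ is supported in $\{H\le 2M\}$ and then invoking uniqueness ``there'', it replaces the vector field by $\tilde\bb:=(\psi\circ E)\bb$ (with $\psi\equiv 1$ on the relevant energy range), so that $(\phi\circ E)w_t$ solves a global continuity equation with a globally admissible $\tilde\bb$. This sidesteps your spatial cutoff $\psi_R$ entirely, since one can then quote the linear-growth $BV$ result directly. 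Also, the paper works with a single nonnegative solution rather than the signed difference $w^1-w^2$; both are fine, but yours tacitly uses that Ambrosio uniqueness holds for signed $L^\infty$ solutions.

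Your existence argument is genuinely different, and this is where there is a gap. The paper does not localize in energy at all: it approximates $U$ by smooth globally Lipschitz $V_k$ with $\nabla V_k\to\nabla U$ in $L^1_{\rm loc}$, pushes $\bar w$ forward under the classical smooth flow of $(p,-\nabla V_k)$ to obtain $w^k_t$ with $\|w^k_t\|_\infty\le\|\bar w\|_\infty$ and $\|w^k_t\|_1=\|\bar w\|_1$, and extracts a weak limit. Your route instead assumes that $\{H<M\}$ is invariant and that Ambrosio's theory produces a solution $w^M$ supported there. But invariance of the sublevel is not an input of the Ambrosio theory; it is a consequence (via renormalization, applied to $H$) once the flow already exists. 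As written, the argument is circular: you invoke invariance to build the solution, but invariance is only available after you have the RLF. The fix is precisely the paper's vector-field modification: work globally with $\tilde\bb_M=(\psi_M\circ E)\bb$, construct the RLF and the solution $w^M$ on all of $\R^d$, and then use renormalization to conclude a posteriori that $H$ is conserved and $w^M$ stays supported in $\{H\le M\}$. Once this is made explicit your monotone-limit scheme goes through, but the paper's smoothing argument reaches the goal with less machinery.
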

\begin{proof} (Uniqueness) Let $w_t\in L_+^\infty\bigl([0,T];L^1\cap
L^\infty(\R^d)\bigr)$ be a solution to \eqref{contieqw}, and
consider a smooth compactly supported function $\phi:\R \to \R^+$.
Set $E=E(x,p):=\frac{1}{2}|p|^2+U(x)$. Then, since $U$ is locally
Lipschitz on sublevels $\{U \leq \ell\}$ for any $\ell \in \R$ (by
(i)-(ii)), $\phi\circ E$ is uniformly bounded and locally Lipschitz
in $\R^d$. Moreover
$$
\langle\n\bigl(\phi\circ
E\bigr)(z),\bb(z)\rangle=\phi'(E(z))\langle\n E(z),\bb(z)\rangle=0
\qquad \text{for $\Leb{d}$-a.e. $z\in\R^d$},
$$
and we easily deduce that also $(\phi\circ E)w_t\in
L_+^\infty\bigl([0,T];L^1\cap L^\infty(\R^d)\bigr)$ solves
\eqref{contieqw}. Let $M>0$ be large enough so that
$\supp\phi\subset [-M,M]$, and let $\psi:\R \to \R^+$ be a smooth
cut-off function such that $\psi\equiv 1$ on $[-M,M]$. Then $\phi
\circ E=(\psi \circ E) (\phi\circ E)$, which implies that
$(\phi\circ E)w_t$ solves \eqref{contieqw} with the vector field
$\tilde \bb:=(\psi\circ E)\bb$. Now, thanks to (i)-(iii), it is
easily seen that the following properties hold:
\begin{equation}
\label{eq:bound tilde b} \tilde\bb\in BV_{{\rm
loc}}(\R^d;\R^d),\qquad\esssup\frac{|\tilde \bb|(z)}{1+|z|} <\infty.
\end{equation}
Indeed, the first one is a direct consequence of (i)-(ii), while
the second one follows from (ii)-(iii) and the simple estimate
$$
\esssup\limits_{E(z) \leq M'}\frac{|\bb(z)|}{1+|z|} \leq \biggl(\sup
\frac{|p|}{1+|p|}\biggr) + \biggl(\esssup\limits_{U(x) \leq
M'}\frac{|\n U(x)|}{1+|x|}\biggr) <\infty\qquad \forall\,M'>0.
$$
Thanks to \eqref{eq:bound tilde b}, we can apply \cite[Theorems 34
and 26]{cetraro} to deduce that $(\phi\circ E)w_t$ is unique,
given the initial condition $\mu_0=(\phi\circ E)w_0\Leb{d}$. Since
$E(z)$ is finite for $\Leb{d}$-a.e. $z$, by the arbitrariness of
$\phi$ we easily obtain that $w_t$ is unique, given the initial
condition $w_0$.

\noindent (Existence) We now want to prove existence of solutions in
$L_+^\infty\bigl([0,T];L^1\cap L^\infty(\R^d)\bigr)$. Let $\bar w\in
L^1\cap L^\infty(\R^d)$ be nonnegative and let us consider a
sequence of smooth globally Lipschitz functions $V_k$ with $|\nabla
V_k-\nabla U|\to 0$ in $L^1_{\rm loc}(\R^n)$; standard results imply
the existence of nonnegative solutions $w^k$ to the continuity
equation with velocity $\bb^k:=(p,-\nabla V_k)$ with $w^k_0=\bar w$,
$\int_{\R^d} w_t^k\,dx\,dp=\int_{\R^d} w_0^k\,dx\,dp$ and with
$\|w_t^k\|_\infty\leq\|w_0^k\|_\infty$ (they are the push forward of
$w^k_0$ under the flow map of $\bb^k$). Since $\phi\mapsto
\int_{\R^d}w^k_t\phi\,dx\,dp$ are equi-continuous for all $\phi\in
C_c^1(\R^d)$, we can assume the existence of $w\in
L_+^\infty\bigl([0,T];L^1\cap L^\infty(\R^d)\bigr)$ with $w^k_t\to
w_t$ weakly, in the duality with $C^1_c(\R^d)$, for all $t\geq 0$.
Taking the limit as $k\to\infty$ immediately gives that $w_t$ is a
solution to \eqref{contieqw}.
\end{proof}

\begin{theorem}\label{tmain1}
Under assumptions (i), (ii), (iii), the $\nu$-RLF
$\bigl(\xx(t,x,p),\pp(t,x,p)\bigr)$ in $\R^{2n}$ and the $\nnu$-RLF
$\mmu(t,\mu)$ in $\Probabilities{\R^{2n}}$ relative to
$\bb(x,p):=(p,-\nabla U(x))$ exist and are unique. They are related
by
\begin{equation}\label{svitla}
\mmu(t,\mu)=\int_{\R^{2d}}
\delta_{(\sxx(t,x,p),\spp(t,x,p))}\,d\mu(x,p).
\end{equation}
\end{theorem}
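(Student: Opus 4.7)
The plan is to assemble the theorem as an application of the general machinery developed in the previous sections, using the well-posedness of the continuity equation established in Theorem~\ref{tmain3} as the crucial input. The vector field $\bb(x,p)=(p,-\nabla U(x))$ is in general singular on $S\times\R^n$, so some care is needed, but the structural pieces are already in place.

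First I would invoke Theorem~\ref{tmain3}, which under assumptions (i), (ii), (iii) gives existence and uniqueness of nonnegative solutions to \eqref{contieqw} in the class $L^\infty_+\bigl([0,T];L^1\cap L^\infty(\R^{2n})\bigr)$ for this specific $\bb$. With this well-posedness at hand, Theorem~\ref{texirlfrd} applies and produces a $\nu$-RLF $\XX(t,x,p)=\bigl(\xx(t,x,p),\pp(t,x,p)\bigr)$ in $\R^{2n}$ for every $\nu\in\Measures{\R^{2n}}$ with $\nu\ll\Leb{2n}$ of bounded density, and shows that such an $\XX$ is unique in the strong sense specified after Definition~\ref{RLflow}.

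Next, I would pass from $\R^{2n}$ to the space of probability measures. For a regular $\nnu\in\Measuresp{\Probabilities{\R^{2n}}}$ (so that $\E\nnu\leq C\Leb{2n}$), we may set $\nu:=\E\nnu$ and apply Theorem~\ref{texirlfprob} to the $\nu$-RLF $\XX$ just produced. This yields a $\nnu$-RLF $\mmu$ in $\Probabilities{\R^{2n}}$ defined precisely by \eqref{realRLF}, which in our notation is \eqref{svitla}. For uniqueness of $\mmu$, I would note that any $\nnu$-RLF induces a generalized $\nnu$-RLF via \eqref{induced}--\eqref{induced1}, so by Theorem~\ref{tuniflow} applied with the well-posedness of \eqref{contieqw} given by Theorem~\ref{tmain3}, both the $\nnu$-RLF and the generalized $\nnu$-RLF are unique, and any $\nnu$-RLF must coincide with the one defined by \eqref{svitla}.

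The only subtle point I foresee is making sure that $|\bb|\in L^1_{\rm loc}\bigl((0,T)\times\R^{2n};\mmu(t,\mu)dt\bigr)$ along the flow, as required by Definition~\ref{RLflowmis}; this is where assumption (ii), forcing $U\to+\infty$ near $S$, plays its role together with the energy preservation argument from the proof of Theorem~\ref{tmain3}: the Hamiltonian $E(x,p)=\tfrac12|p|^2+U(x)$ is a first integral, so trajectories stay at positive distance from the singular set $S\times\R^n$, and \eqref{eq:local bddness} then guarantees the required local integrability. Once this is checked, the statement follows by direct concatenation of Theorems~\ref{tmain3}, \ref{texirlfrd}, \ref{texirlfprob}, and \ref{tuniflow}, with \eqref{svitla} being just a restatement of \eqref{realRLF} in the present notation.
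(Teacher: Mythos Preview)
Your proposal is correct and follows essentially the same route as the paper: invoke Theorem~\ref{tmain3} for well-posedness of \eqref{contieqw}, then apply Theorems~\ref{texirlfrd}, \ref{texirlfprob}, and \ref{tuniflow} in succession to obtain existence, the explicit formula \eqref{svitla}, and uniqueness. Your additional remark about verifying the local integrability condition $|\bb|\in L^1_{\rm loc}$ along the flow via energy conservation is a legitimate point that the paper leaves implicit, and your resolution is sound.
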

\begin{proof} Existence and uniqueness of the $\nu$-RLF in $\R^d$
follow by Theorem~\ref{tmain3} and Theorem~\ref{texirlfrd}. The
uniqueness of the $\nnu$-RLF in $\Probabilities{\R^d}$ and its
relation with the $\nu$-RLF are a consequence respectively of
Theorem~\ref{tuniflow} and Theorem~\ref{texirlfprob}.
\end{proof}

\section{Estimates on solutions to \eqref{eq:Sch} and on error
terms}\label{estimates}

In this section we collect some a-priori estimates on solutions to
\eqref{eq:Sch} and on the error terms $\Errordelta{U}{\psi}$,
$\Error{U}{\psi}$, appearing respectively in \eqref{Wigner_calc_in}
and \eqref{Wigner_calc}.

We recall that the Husimi transform $\psi\mapsto\tilde{W}_\e\psi$
can be defined in terms of convolution of the Wigner transform with
the $2n$-dimensional Gaussian kernel with variance $\e/2$
\begin{equation}\label{heatK}
G_\e^{(2n)}(x,p):=
\frac{e^{-(|x|^2+|p|^2)/\e}}{(\pi\e)^n}=G^{(n)}_\e(x)G^{(n)}_\e(p),
\end{equation}
namely $\tilde W_\e\psi=(W_\e\psi)\ast G^{(2n)}_\e$. It turns out
that the asymptotic behaviour as $\e\to 0$ is the same for the
Wigner and the Husimi transform (see also \eqref{uniformly} below
for a more precise statement).

For later use, we recall that the $x$ marginal of $W_\e\psi$ is the
position density $|\psi|^2\Leb{n}$. Also, the change of variables
\begin{equation}\label{change}
\begin{cases}
x+\frac{\e}{2}y=u\\
x-\frac{\e}{2}y=u'
\end{cases}
\end{equation}
and a simple computation show that the $p$ marginal of $W_\e\psi$ is
the momentum density, namely $(2\pi\e)^{-n}|{\cal
F}\psi|^2(p/\e)\Leb{n}$ (strictly speaking these identities are only
true in the sense of principal values, since $W_\e\psi$, despite
tending to zero as $|(x,p)|\to\infty$, does not in general belong to
$L^1$). Since the Gaussian kernel $G_\e^{(2n)}(x,p)$ in
\eqref{heatK} has a product structure, it turns out that
\begin{equation}\label{compamarginals1}
\int_{\R^n}\tilde
W_\e\psi(x,p)\,dp=\int_{\R^n}|\psi|^2(x-x')G_\e^{(n)}(x')\,dx',
\end{equation}
\begin{equation}\label{compamarginals2}
\int_{\R^n}\tilde
W_\e\psi(x,p)\,dx=\Bigl(\frac{1}{2\pi\e}\Bigr)^{n}\int_{\R^n}|{\cal
F}\psi|^2\bigl(\frac{p-p'}{\e}\bigr)G_\e^{(n)}(p')\,dp'.
\end{equation}
Since $\tilde W_\e\psi$ is nonnegative (see Section~\ref{sHusimi}
for details) the two identities above hold in the standard sense.

As in \cite{lionspaul} we shall consider the completion ${\cal A}$
of $C^\infty_c(\R^{2n})$ with respect to the norm
\begin{equation}\label{defA}
\|\varphi\|_{{\cal A}}:=\int_{\R^n}\sup_{x\in\R^n}|{\cal
F}_p\varphi|(x,y)\,dy\qquad\varphi\in C^\infty_c(\R^{2n}),
\end{equation}
where ${\cal F}_p$ denotes the partial Fourier transform with
respect to $p$. It is easily seen that
$\sup|\varphi|\leq\|\phi\|_{{\cal A}}$, hence ${\cal A}$ is
contained in $C_b(\R^{2n})$ and ${\mathscr M}(\R^{2n})$ canonically
embeds into ${\cal A}'$ (the embedding is injective by the density
of $C^\infty_c(\R^{2n})$). The norm of ${\cal A}$ is technically
convenient because of the simple estimate
\begin{equation}\label{lionspaul}
\biggl|\int_{\R^{2n}}\varphi
W_\e\psi\,dxdp\biggr|\leq\frac1{(2\pi)^n} \|\varphi\|_{{\cal A}}
\|\psi\|^2_2.
\end{equation}
Since for all $\varphi\in C^\infty_c(\R^{2n})$ one has $\varphi\ast
G_\e^{(2n)}\to\varphi$ in ${\cal A}$ as $\e\downarrow 0$, it follows
that
\begin{equation}\label{uniformly}
\lim_{\e\downarrow 0}\int_{\R^d} \varphi
W_\e\psi\,dx\,dp-\int_{\R^d}\varphi\tilde{W}_\e\psi\,dx\,dp=0
\qquad\text{uniformly on bounded subsets of $L^2(\R^d;\C)$.}
\end{equation}
This will obviously be an ingredient in transferring the dynamical properties from the
Wigner to the Husimi transforms.

\subsection{The PDE satisfied by the Husimi transforms}

In this short section we see how \eqref{Wigner_calc_in} is modified
in passing from the Wigner to the Husimi transform. Denoting by
$\tau_{(y,q)}$ the translation in phase space induced by
$(y,q)\in\R^n\times\R^n$, from \eqref{Wigner_calc_in} we get
$$
\partial_t\tau_{(y,q)}W_\e\psi^\e_t+(p-q)\cdot\nabla_x\tau_{(y,q)}W_\e\psi^\e_t=
\tau_{(y,q)}\Errordelta{U}{\psi^\e_t}
$$
in the sense of distributions. Since $\tilde W_\e\psi^\e_t$ is an
average of translates of $W_\e\psi^\e_t$, we get (still in the sense
of distributions)
\begin{equation}\label{pdehusimi}
\partial_t\tilde W_\e\psi^\e_t+p\cdot\nabla_x\tilde W_\e\psi^\e_t=
\Errordelta{U}{\psi^\e_t}\ast G^{(2n)}_\e+\sqrt{\e}\nabla_x\cdot
[W_\e\psi^\e_t\ast \bar{G}_\e^{(2n)}],
\end{equation}
where
\begin{equation}\label{pdehusimibis}
\bar{G}_\e^{(2n)}(y,q):=\frac{q}{\sqrt{\e}}G^{(2n)}_\e(y,q).
\end{equation}
Indeed, we have
$$
-\int_{\R^{2n}}q\cdot\nabla_x\tau_{(y,q)}W_\e\psi^\e_t
G^{(2n)}_\e(y,q)\,dydq=-\sqrt{\e}\nabla_x\cdot [W_\e\psi^\e_t\ast
\bar{G}_\e^{(2n)}].
$$
Although we will not use it here, let us mention that it is possible to derive a closed equation
(i.e. not involving $W_\e\psi^\e_t$) for $\tilde W_\e\psi^\e_t$ (see \cite{athanassoulis1} and
\cite{athanassoulis3}, \cite{athanassoulis2} for applications to the semiclassical limit
in strong topology).

\subsection{Assumptions on $U$}
\label{assumptionsonU}

We assume that $n=3M$, $x=(x_1,\ldots,x_M)\in (\R^3)^M$ and
$U=U_{s}+U_{b}$, with $U_{s}$ the (repulsive) Coulomb potential
\begin{equation}\label{Coulomb}
U_{s}(x)=\sum_{1\leq i<j\leq M}\frac{Z_iZ_j}{|x_i-x_j|},
\end{equation}
with $Z_i>0$, and $U_{b}$ globally bounded and
Lipschitz, with $\nabla U_{b}\in BV_{\rm loc}(\R^n;\R^n)$.

In this context the singular set $S$ of Section~\ref{sstable} and
Section~\ref{sgoodliou} is given by
$$
S=\bigcup_{1\leq i<j\leq M}S_{ij}\qquad\text{with}\qquad
S_{ij}:=\{x\in\R^n:\ x_i=x_j\}
$$
and therefore
\begin{equation}\label{lowerU}
U_{s}(x)\geq \frac{c}{{\rm dist\,}(x,S)}
\end{equation}
with $c>0$ depending only on the numbers $Z_i$ in \eqref{Coulomb}.

The vector field $\bb=(p,-\nabla U)$ satisfies the assumptions
(a)-(b) of Section~\ref{sstable} and the assumptions (i)-(iii) of
Section~\ref{sgoodliou}, so that the $\nu$-RLF in $\R^{2n}$ and the
$\nnu$-RLF in $\Probabilities{\R^{2n}}$ relative to $\bb$ exists and
are unique, and the stability result of Section~\ref{sstable} can be
applied, as we will show in Section~\ref{smain}.

\subsection{Estimates on solutions to \eqref{eq:Sch}}

\smallskip\noindent {\bf Conserved quantities.}
\begin{equation}\label{amfrja1}
\int_{\R^n}\frac{1}{2}|\e\nabla\psi^\e_t|^2+U|\psi^\e_t|^2\,dx=
\int_{\R^n}\frac{1}{2}|\e\nabla\psi^\e_0|^2+U|\psi^\e_0|^2\,dx
\qquad\forall\, t\in\R,
\end{equation}
\begin{equation}\label{amfrja2}
\int_{\R^n}|H_\e\psi^\e_t|^2\,dx=
\int_{\R^n}|H_\e\psi^\e_0|^2\,dx
\qquad\forall\, t\in\R.
\end{equation}

\smallskip\noindent {\bf A priori estimate.}
\cite[Lemma~5.1]{amfrja}.
\begin{equation}\label{amfrja3}
\sup_{t\in\R}\int_{\R^n} U_{s}^2|\psi^\e_t|^2\,dx\leq
\int_{\R^n}|H_\e\psi^\e_0|^2\,dx
+2\sup|U_{b}|\bigl(\int\langle\psi^\e_0,H_\e\psi^\e_0\rangle\,dx+\sup|U_{b}|\bigr).
\end{equation}

\smallskip\noindent
{\bf Tightness in space.} \cite[Lemma~3.3]{amfrja}.
\begin{equation}\label{alltight}
\sup_{t\in [-T,T]}\int_{\R^n\setminus B_{2R}}|\psi^\e_t|^2\,dx\leq
\int_{\R^n\setminus
B_R}|\psi^\e_0|^2(x)\,dx+cT\frac{1+\int\langle\psi^\e_0,H_\e\psi^\e_0\rangle\,dx}{R}
\end{equation}
with $c$ depending only on $n$.

\subsection{Estimates and convergence of $\Errordelta{U_b}{\psi}$}

In this section we prove estimates and convergence of the term
$\Errordelta{U_b}{\psi}$, as defined in \eqref{error_delta}. In
particular we use averaging with respect to the ``random'' parameter
$w$ to derive new estimates on $\Errordelta{V}{\psi^\e_w}$, with $V$
Lipschitz only, so that the estimates are applicable to $V=U_b$.

The first basic estimate on $\Errordelta{V}{\psi}$, for $\psi$ with
unit $L^2$ norm, can be obtained, when $V$ is Lipschitz, by
estimating the difference quotient in the square brackets in
\eqref{error_delta} with the Lipschitz constant:
\begin{equation}\label{primaerror}
\biggl|\int_{\R^{2n}}\Errordelta{V}{\psi}\phi\,dx\,dp\biggr|\leq\frac{1}{(2\pi)^n}\|\nabla
V\|_\infty\int_{\R^n}|y|\sup_{x\in\R^n}|{\cal F}_p\phi|(x,y)\,dy.
\end{equation}

In order to derive a more refined estimate we consider families
$\psi^\e_w$ indexed by a parameter $w\in W$, with $(W,{\mathcal
F},\P)$ probability space, satisfying:
\begin{equation}\label{meanbetter2}
\sup_{\e>0}\sup_{(x,p)\in\R^{2n}} \int_W\tilde
W_\e\psi^\e_w(x,p)\,d\P(w)<\infty,
\end{equation}
\begin{equation}\label{meanbetter1}
\sup_{\e>0}\sup_{x\in\R^n} \int_W|\psi^\e_w\ast
G^{(n)}_{\lambda\e^2}|^2(x)\,d\P(w)\leq
C(\lambda)<\infty\qquad\forall\lambda>0.
\end{equation}

Under these assumptions, our first convergence result reads as
follows:

\begin{theorem}[Convergence of error term, I]\label{extracanc}
Let $\psi_w^\e\in L^2(\R^n;\C)$ be normalized wavefunctions
satisfying \eqref{meanbetter2}, \eqref{meanbetter1} and let
$V:\R^n\to\R$ be Lipschitz. Then
\begin{equation}\label{erro_co}
\lim_{\e\to 0}\int_W
\biggl|\int_{\R^{2n}}\Errordelta{V}{\psi^\e_w}\phi\,dxdp+\int_{\R^{2n}}
\langle\nabla V,\nabla_p\phi\rangle\tilde
W_\e\psi^\e_wdxdp\biggr|d\P(w)=0\qquad\forall\phi\in
C^\infty_c(\R^{2n}).
\end{equation}
\end{theorem}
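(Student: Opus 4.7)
The plan is to reduce \eqref{erro_co} to a smooth-plus-rough decomposition of $V$ via mollification, using Taylor expansion combined with \eqref{uniformly} for the smooth piece and the averaged bounds \eqref{meanbetter2}, \eqref{meanbetter1} for the rough piece. The starting point is the algebraic identity $\Errordelta{V}{\psi} = \nabla V(x)\cdot\nabla_p W_\e\psi + \Error{V}{\psi}$ combined with integration by parts in $p$ (legitimate since $\phi\in C^\infty_c$ and $\nabla V$ depends only on $x$), which yields
\[
\int\Errordelta{V}{\psi}\phi\,dx dp + \int\langle\nabla V,\nabla_p\phi\rangle\tilde W_\e\psi\,dx dp = \int\Error{V}{\psi}\phi\,dx dp + \int\langle\nabla V,\nabla_p\phi\rangle(\tilde W_\e\psi - W_\e\psi)\,dx dp.
\]
Thus it suffices to prove that each of the two terms on the right-hand side tends to $0$ in $L^1(\P)$ as $\e\downarrow 0$.

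Fix $\delta>0$ and set $V_\delta := V\ast\rho_\delta$ for a standard mollifier $\rho_\delta$, so that $\|\nabla V_\delta\|_\infty\leq\|\nabla V\|_\infty$, $\|\nabla^2 V_\delta\|_\infty = O(1/\delta)$, and $\nabla R_\delta := \nabla(V-V_\delta)\to 0$ in $L^p_{\rm loc}(\R^n)$ for every $p<\infty$. Both right-hand-side terms split accordingly into contributions from $V_\delta$ and $R_\delta$. For the smooth part, a second-order Taylor expansion of the bracketed difference quotient in \eqref{error} gives $|\int\Error{V_\delta}{\psi}\phi\,dx dp| = O(\e/\delta)$ uniformly in $\|\psi\|_2\leq 1$, while $\langle\nabla V_\delta,\nabla_p\phi\rangle\in C^\infty_c(\R^{2n})$ allows \eqref{uniformly} to give $\int\langle\nabla V_\delta,\nabla_p\phi\rangle(\tilde W_\e\psi - W_\e\psi) = o_\e(1)$, again uniformly in $\|\psi\|_2\leq 1$. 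Choosing $\delta=\delta(\e)\downarrow 0$ with $\e/\delta\to 0$ (e.g.\ $\delta=\sqrt\e$) then handles the smooth contribution.

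For the rough part the Husimi-side term is straightforward: Fubini, nonnegativity of $\tilde W_\e\psi^\e_w$, and \eqref{meanbetter2} bound the $L^1(\P)$ norm of $\int\langle\nabla R_\delta,\nabla_p\phi\rangle\tilde W_\e\psi^\e_w$ by $C\int|\nabla R_\delta||\nabla_p\phi|\,dxdp$, which is $o_\delta(1)$. The difficult terms are $\int\Error{R_\delta}{\psi^\e_w}\phi$ and $\int\langle\nabla R_\delta,\nabla_p\phi\rangle W_\e\psi^\e_w$, both bilinear in $\psi^\e_w$: after the change of variables $u = x+\e y/2$, $u' = x-\e y/2$, each takes the form $\int F(u,u',\e)\,\psi^\e_w(u)\overline{\psi^\e_w(u')}\,du\,du'$ with $F$ built from $\nabla R_\delta$ and the rapidly-decaying Fourier dual $\check\phi$. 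The main obstacle lies here, because the Lions--Paul estimate \eqref{lionspaul} alone does \emph{not} give smallness as $\delta\downarrow 0$ (since $\|\nabla R_\delta\|_\infty$ does not vanish). The resolution is to apply Cauchy--Schwarz in $d\P(w)$ and to rewrite the Gaussian-weighted bilinear pairings in $\psi^\e_w$ as squared convolutions, so that the resulting averaged quantity $\int_W|\psi^\e_w\ast G^{(n)}_{\lambda\e^2}|^2 d\P$ is controlled in $L^\infty_x$ by \eqref{meanbetter1}; pairing this $L^\infty_x$ bound with the $L^1_{\rm loc}$ smallness of $\nabla R_\delta$ yields an $o_\delta(1)$ estimate uniform in $\e$, and combining every piece gives \eqref{erro_co}.
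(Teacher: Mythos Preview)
Your overall architecture --- mollify $V$, handle the smooth piece by Taylor expansion and \eqref{uniformly}, and handle the rough remainder via the averaged assumptions \eqref{meanbetter2}, \eqref{meanbetter1} --- matches the paper's. The gap is in your final paragraph, at precisely the point you flag as ``the main obstacle''. You assert that after Cauchy--Schwarz in $d\P(w)$ one can ``rewrite the Gaussian-weighted bilinear pairings in $\psi^\e_w$ as squared convolutions'' so that $\int_W|\psi^\e_w\ast G^{(n)}_{\lambda\e^2}|^2\,d\P$ appears. But nothing in your setup is Gaussian-weighted: after the change of variables \eqref{change} the kernel pairing $\psi^\e_w(u)\overline{\psi^\e_w(u')}$ carries the factor $\check\phi\bigl(\tfrac{u+u'}{2},\tfrac{u-u'}{\e}\bigr)$, which is Schwartz in the second slot, not Gaussian. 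There is no mechanism in what you wrote that produces $\psi^\e_w\ast G^{(n)}_{\lambda\e^2}$, and without it \eqref{meanbetter1} cannot be invoked. Cauchy--Schwarz in $w$ alone leaves you with $\int_W|\psi^\e_w(u')|^2\,d\P(w)$ pointwise in $u'$, on which you have no control.

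The paper resolves this by two preparatory moves that you skipped. First, it reduces to tensor test functions $\phi(x,p)=\phi_1(x)\phi_2(p)$, which are dense for the norm in \eqref{primaerror}; this separates the $x$ and $p$ (equivalently $y$) dependence of $\check\phi$. Second --- and this is the key idea --- it \emph{inserts} a Gaussian by replacing $\hat\phi_2(y)$ with $\hat\phi_2\ast G^{(n)}_\lambda(y)$ (equivalently, multiplying $\phi_2(p)$ by $e^{-\lambda|p|^2/4}$), at the cost of an error controlled by the first line of \eqref{apriori1}. After the change of variables \eqref{change} the inserted Gaussian becomes $G^{(n)}_{\lambda\e^2}(u-u'-\e z)$, so one of the two $\psi^\e_w$ factors can be absorbed into the convolution $\psi^\e_w\ast G^{(n)}_{\lambda\e^2}$; this is exactly the manipulation in \eqref{perm0}--\eqref{perm2} leading to the commutator-type splitting $R^\e_{w,1}+R^\e_{w,2}$, and only then does Cauchy--Schwarz in $w$ together with \eqref{meanbetter1} close the estimate. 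Your sketch is correct up to the point where the real work begins; to complete it you need Proposition~\ref{apriorinice} or an equivalent device that manufactures the Gaussian.
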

\begin{proof} The proof is achieved by a density argument. The first
remark is that linear combinations of tensor functions
$\phi(x,p)=\phi_1(x)\phi_2(p)$, with $\phi_i\in C^\infty_c(\R^n)$,
are dense for the norm considered in \eqref{primaerror}. In this
way, we are led to prove convergence in the case when
$\phi(x,p)=\phi_1(x)\phi_2(p)$. The second remark is that
convergence surely holds if $V$ is of class $C^2$ (by the arguments
in \cite{lionspaul}, \cite{amfrja}, see also the splitting argument
in the $y$ space in the proof of Theorem~\ref{extracoulomb}). Hence,
combining the two remarks and using the linearity of the error term
with respect to the potential $V$, we can prove convergence by a
density argument, by approximating $V$ uniformly and in $W^{1,2}$
topology on the support of $\phi_1$ by potentials $V_k\in C^2(\R^n)$
with uniformly Lipschitz constants; then, setting
$A_k=(V-V_k)\phi_1$ and choosing a sequence $\lambda_k$ in
Proposition~\ref{apriorinice} converging slowly to $0$, in such a
way that $\|\nabla A_k\|_2\to 0$ much faster than $1/C(\lambda_k)$,
we obtain
$$
\lim_{k\to\infty} \sup_{\e> 0}\int_W
\biggl|\int_{\R^{2n}}\Errordelta{V-V_k}{\psi^\e_w}(x,p)\phi_1(x)\phi_2(p)\,dxdp
\biggr\vert\,d\P(w)=0.
$$
As for the term in \eqref{erro_co} involving the Wigner transforms,
we can use \eqref{meanbetter2} to obtain that
$$
\limsup_{k\to\infty}\sup_{\e>0}\int_W \biggl\vert \int_{\R^{2n}}
\tilde W_\e\psi^\e_w\langle\nabla
(V-V_k),\nabla\phi_2\rangle\phi_1\,dxdp \biggr\vert\,d\P(w)
$$
can be estimated from above with a constant multiple of
$$
\limsup_{k\to\infty}\int_{\R^n}|\phi_1||\nabla V-\nabla V_k|\,dx
\int_{\R^n}|\nabla\phi_2|(p)\,dp=0.
$$
\end{proof}

We shall actually use the conclusion of Theorem~\ref{extracanc} in
the form
\begin{equation}\label{extracancbis}
\lim_{\e\to 0}\int_W
\biggl|\int_{\R^{2n}}\Errordelta{V}{\psi^\e_w}\phi\ast
G^{(2n)}_\e\,dxdp+\int_{\R^{2n}} \langle\nabla
V,\nabla_p\phi\rangle\tilde
W_\e\psi^\e_wdxdp\biggr|d\P(w)=0\qquad\forall\phi\in
C^\infty_c(\R^{2n})
\end{equation}
with $\phi$ replaced by $\phi\ast G^{(2n)}_\e$ in the first summand,
in the factor of $\Errordelta{V}{\psi^\e_w}$; this formulation is
equivalent thanks to \eqref{primaerror}.

\begin{proposition}[A priori estimate]\label{apriorinice}
Let $\psi_w^\e\in L^2(\R^n;\C)$ be unitary wavefunctions satisfying
\eqref{meanbetter1} and let $\phi_1,\,\phi_2\in C^\infty_c(\R^n)$.
Then, for all $V:\R^n\to\R$ Lipschitz and all $\lambda>0$, we have
that
\begin{equation}\label{apriori}
\int_W\biggl|\int_{\R^{2n}}
\Errordelta{V}{\psi_w^\e}(x,p)\phi_1(x)\phi_2(p)\,dxdp
\biggr|\,d\P(w)
\end{equation}
can be estimated from above with
\begin{eqnarray}\label{apriori1}
&&\|\phi_1\|_\infty\|\nabla V\|_\infty\int_{\R^n}|y||{\cal F}_p
\phi_2(y)-{\cal F}_p\phi_2\ast
G^{(n)}_\lambda(y)|\,dy+\sqrt{\lambda}\|\nabla A\|_\infty\|{\cal
F}_p\phi_2\|_1\int_{\R^n}|u|G^{(n)}_1(u)\,du\nonumber \\&&+
\sqrt{C(\lambda)}\|\nabla A\|_2\int_{\R^n}|z||{\cal
F}_p\phi_2|(z)\,dz+
\|V\|_\infty\|\nabla\phi_1\|_\infty\int_{\R^n}|y||{\cal
F}_p\phi_2\ast G^{(n)}_\lambda|(y)\,dy.
\end{eqnarray}
where $A:=V\phi_1$ and $C(\lambda)$ is given in \eqref{meanbetter1}.
\end{proposition}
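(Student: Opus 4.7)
The plan is to start from the explicit formula
\[
\int \Errordelta{V}{\psi^\e_w}\phi_1(x)\phi_2(p)\,dxdp = -\frac{i}{(2\pi)^n}\int\!\!\int \frac{V(x+\frac{\e}{2}y) - V(x-\frac{\e}{2}y)}{\e}\psi^\e_w(x+\tfrac{\e}{2}y)\overline{\psi^\e_w(x-\tfrac{\e}{2}y)}\phi_1(x)\mathcal{F}_p\phi_2(y)\,dxdy,
\]
obtained by integrating \eqref{error_delta} against $\phi_2$ in the $p$-variable, and to decompose $\mathcal{F}_p\phi_2 = [\mathcal{F}_p\phi_2 - \mathcal{F}_p\phi_2\ast G^{(n)}_\lambda] + \mathcal{F}_p\phi_2\ast G^{(n)}_\lambda$. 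Within the smooth piece I will further split $V(x\pm\tfrac{\e}{2}y)\phi_1(x) = A(x\pm\tfrac{\e}{2}y) + V(x\pm\tfrac{\e}{2}y)[\phi_1(x) - \phi_1(x\pm\tfrac{\e}{2}y)]$ with $A=V\phi_1$, and for the principal $A$-piece I will expand $\mathcal{F}_p\phi_2\ast G^{(n)}_\lambda(y) = \int\mathcal{F}_p\phi_2(z)G^{(n)}_\lambda(y-z)\,dz$ and introduce $z$ as a reference point via $A(x\pm\tfrac{\e}{2}y) = A(x\pm\tfrac{\e}{2}z) + [A(x\pm\tfrac{\e}{2}y) - A(x\pm\tfrac{\e}{2}z)]$. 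This organises the left-hand side of \eqref{apriori} as four contributions that match the four summands of~\eqref{apriori1} term by term.

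Three of the four bounds are obtained by routine estimates. The \emph{difference} piece of $\mathcal{F}_p\phi_2$ gives the first summand, through the crude Lipschitz bound $|V(x+\tfrac{\e}{2}y) - V(x-\tfrac{\e}{2}y)|/\e \leq \|\n V\|_\infty|y|$, the bound $|\phi_1| \leq \|\phi_1\|_\infty$, and Cauchy--Schwarz $\int|\psi^\e_w(x+\tfrac{\e}{2}y)\overline{\psi^\e_w(x-\tfrac{\e}{2}y)}|\,dx \leq 1$. The $\phi_1$-correction subterms in the smooth piece produce the fourth summand via $|\phi_1(x) - \phi_1(x\pm\tfrac{\e}{2}y)|/\e \leq \tfrac{1}{2}\|\n\phi_1\|_\infty|y|$ and $|V| \leq \|V\|_\infty$. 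The $z$-correction subterms in the $A$-piece are each bounded by $\tfrac{1}{2}\|\n A\|_\infty|y-z|$; the substitution $u=(y-z)/\sqrt{\lambda}$ turns the Gaussian $y$-integration into $\sqrt{\lambda}\int|u|G^{(n)}_1(u)\,du$, and factorisation of the remaining $z$-integration gives the second summand with its $\|\mathcal{F}_p\phi_2\|_1$ factor.

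The principal remaining contribution is
\[
-\frac{i}{(2\pi)^n}\int\!\!\int\mathcal{F}_p\phi_2(z)\frac{A(x+\tfrac{\e}{2}z) - A(x-\tfrac{\e}{2}z)}{\e}K_w(x,z)\,dxdz,\quad K_w(x,z):=\int\psi^\e_w(x+\tfrac{\e}{2}y)\overline{\psi^\e_w(x-\tfrac{\e}{2}y)}G^{(n)}_\lambda(y-z)\,dy.
\]
I will write $\frac{A(x+\e z/2) - A(x-\e z/2)}{\e} = z\cdot\int_{-1/2}^{1/2}\n A(x+\e z\theta)\,d\theta$, apply Cauchy--Schwarz in $x$ together with the translation invariance of $\|\n A\|_2$ to bound the inner $x$-integral by $|z|\,\|\n A\|_2\,\|K_w(\cdot,z)\|_{L^2_x}$, and then apply Cauchy--Schwarz in $\P$ to get $\int_W\|K_w(\cdot,z)\|_2\,d\P \leq (\int_W\|K_w(\cdot,z)\|_2^2\,d\P)^{1/2}$; integrating against $|\mathcal{F}_p\phi_2(z)|$ in $z$ then delivers the third summand as soon as one proves the crucial uniform estimate
\[
\int_W\int_{\R^n}|K_w(x,z)|^2\,dx\,d\P(w) \leq \text{const}\cdot C(\lambda)\qquad\forall\, z.
\]
This last bound is the main obstacle. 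A naive Cauchy--Schwarz in the $y$-integral defining $K_w$ only yields $|K_w(x,z)|^2 \leq (|\psi^\e_w|^2\ast G^{(n)}_{\e^2\lambda/4})(x+\tfrac{\e}{2}z)\,(|\psi^\e_w|^2\ast G^{(n)}_{\e^2\lambda/4})(x-\tfrac{\e}{2}z)$, whereas \eqref{meanbetter1} controls the mean of $|\psi^\e_w\ast G^{(n)}_{\lambda\e^2}|^2$, which is smaller than $|\psi^\e_w|^2\ast G^{(n)}_{\lambda\e^2}$ by Jensen. I plan to close this gap by exploiting the $x$-mollification identity
\[
(G^{(n)}_{\e^2\lambda/4}\ast_x K_w)(x,z) = F_w(x+\tfrac{\e}{2}z)\overline{F_w(x-\tfrac{\e}{2}z)},\qquad F_w:=\psi^\e_w\ast G^{(n)}_{\e^2\lambda/2},
\]
which expresses the $x$-smoothed $K_w$ as a cross product of coherent-state-smeared wavefunctions whose $L^2$-mean is directly controlled by \eqref{meanbetter1}; combined with Plancherel in $x$ and a careful bookkeeping of the mollification factors, this should produce the required $L^2$-bound and hence complete the proof.
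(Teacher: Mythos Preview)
Your treatment of the first, second, and fourth summands is correct and essentially coincides with the paper's. The gap is in the principal $A$-piece, and it is a genuine one.

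You need to bound $\int_W\|K_w(\cdot,z)\|_{L^2_x}\,d\P(w)$ uniformly in $z,\e$, and you propose to do this via the (correct) identity $G^{(n)}_{\e^2\lambda/4}\ast_x K_w = F_w(x+\tfrac{\e}{2}z)\overline{F_w(x-\tfrac{\e}{2}z)}$ with $F_w=\psi^\e_w\ast G^{(n)}_{\e^2\lambda/2}$. But convolution in $x$ \emph{decreases} $L^2_x$-norms, so control of $\|G\ast_x K_w\|_{L^2_x}$ says nothing about $\|K_w\|_{L^2_x}$; Plancherel only rewrites this inequality, it cannot reverse it. Worse, the bound you ask for is simply false. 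Take coherent states $\psi^\e_w(x)=(\pi\e)^{-n/4}e^{-|x-x_w|^2/(2\e)}$ with $w=x_w$ distributed according to a bounded density: then \eqref{meanbetter1} holds with $C(\lambda)=O(1)$, but a direct computation gives $K_w(x,0)\sim(\pi\e)^{-n/2}e^{-|x-x_w|^2/\e}$ and hence $\|K_w(\cdot,0)\|_{L^2_x}\sim\e^{-n/4}$, independent of $w$. Both $\int_W\|K_w\|_{L^2_x}\,d\P$ and $\int_W\|K_w\|_{L^2_x}^2\,d\P$ blow up as $\e\downarrow 0$, and the same calculation shows $\|G\ast_x K_w\|_{L^2_x}\sim\e^{-n/4}$ as well. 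The obstruction is structural: \eqref{meanbetter1} is an $L^\infty_x L^2_w$ bound on the \emph{linear} object $\psi\ast G$, whereas $\|K_w\|_{L^2_x}$ is an $L^2_xL^2_w$ norm of a \emph{bilinear} object.

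The paper avoids this by performing the change of variables $u=x+\tfrac{\e}{2}y$, $u'=x-\tfrac{\e}{2}y$ \emph{before} introducing any reference point. This decouples the two wavefunction factors: the full Gaussian integration over $u$ (with $u',z$ fixed) turns $\psi^\e_w(u)G^{(n)}_{\lambda\e^2}(u-u'-\e z)$ into $\psi^\e_w\ast G^{(n)}_{\lambda\e^2}(u'+\e z)$, to which \eqref{meanbetter1} applies directly and pointwise. The remaining factor $\overline{\psi^\e_w(u')}$ pairs by Cauchy--Schwarz in $u'$ with the difference quotient $[A(u'+\e z)-A(u')]/\e$, whose $L^2$ norm is $\le|z|\|\nabla A\|_2$. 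The commutator $[(A\psi)\ast G]-A\cdot(\psi\ast G)$ that arises from inserting and removing $A$ at the point $u'+\e z$ is what produces the second summand in \eqref{apriori1}. Your decomposition $A(x\pm\tfrac{\e}{2}y)=A(x\pm\tfrac{\e}{2}z)+\ldots$ keeps the $A$-arguments dependent on $x=(u+u')/2$, so the $u$ and $u'$ integrals never decouple and the pointwise hypothesis \eqref{meanbetter1} cannot be brought to bear.
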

\begin{proof} Set $\hat{\phi}_2={\cal F}_p\phi_2$; since \eqref{primaerror}
gives that
$$
\biggl|\int_{\R^{2n}}\Errordelta{V}{\psi^\e_w}\phi_1(x)\phi_2(p)\,dxdp-
\int_{\R^{2n}}\Errordelta{V}{\psi^\e_w}\phi_1(x)\phi_2(p)e^{-|p|^2\lambda/4}\,dxdp\biggr|$$
can be estimated from above with $\|\phi_1\|_\infty\|\nabla
V\|_\infty\int|y||\hat{\phi}_2(y)-\hat{\phi}_2\ast
G^{(n)}_\lambda(y)|\,dy$ we recognize the first error term in
\eqref{apriori1} and we will estimate the integral of
$\Errordelta{V}{\psi^\e_w}$ against
$\phi_1(x)\phi_2(p)e^{-|p|^2\lambda/4}$, namely
$$
\int_W\int_{\R^{2n}}\frac{V(x+\frac{\e}{2} y)-V(x-\frac{\e}{2}
y)}{\e}\phi_1(x)\hat{\phi}_2\ast G_\lambda^{(n)}(y)
\psi^\e_w(x+\frac{\e}{2} y)\overline{\psi^\e_w(x-\frac{\e}{2}
y)}dxdyd\P(w).
$$
In addition, we split this expression as the sum of three terms,
namely
\begin{equation}\label{defI}
I:= \int_W\int_{\R^{2n}}\frac{A(x+\frac{\e}{2} y) -A(x-\frac{\e}{2}
y)}{\e}\hat{\phi}_2\ast G^{(n)}_\lambda(y) \psi^\e_w(x+\frac{\e}{2}
y)\overline{\psi^\e_w(x-\frac{\e}{2} y)}\,dxdyd\P(w),
\end{equation}
\begin{equation}\label{defII}
II:=\int_W\int_{\R^{2n}}V(x+\frac{\e}{2}y)\frac{\phi_1(x)
-\phi_1(x+\frac{\e}{2} y)}{\e}\hat{\phi}_2\ast G^{(n)}_\lambda(y)
\psi^\e_w(x+\frac{\e}{2} y)\overline{\psi^\e_w(x-\frac{\e}{2}
y)}\,dxdyd\P(w),
\end{equation}
\begin{equation}\label{defIII}
III:=-\int_W\int_{\R^{2n}}V(x-\frac{\e}{2}y)\frac{\phi_1(x)
-\phi_1(x-\frac{\e}{2} y)}{\e}\hat{\phi}_2\ast G^{(n)}_\lambda(y)
\psi^\e_w(x+\frac{\e}{2} y)\overline{\psi^\e_w(x-\frac{\e}{2}
y)}\,dxdyd\P(w).
\end{equation}
The most difficult term to estimate is \eqref{defI}, since both
\eqref{defII} and \eqref{defIII} can be easily estimated from above
with
$\frac{1}{2}\|V\|_\infty\|\nabla\phi_1\|_\infty\int_{\R^n}|y||\hat{\phi}_2\ast
G^{(n)}_\lambda|(y)\,dy$. We first perform some manipulations of
this expression, omitting for simplicity the integration w.r.t. $w$;
then we will estimate the resulting terms taking \eqref{meanbetter1}
into account.

We expand the convolution product and make the change of variables
\eqref{change} to get
\begin{equation}\label{perm0}
\frac{1}{(\pi\lambda)^{n/2}\e^n}\int_{\R^n}\int_{\R^{2n}}
\frac{A(u)-A(u')}{\varepsilon} e^{-\scriptstyle{\frac{|\e
z-(u-u')|^2}{\e^2\lambda}}}
\psi^\e_w(u)\overline{\psi^\e_w(u')}\hat{\phi}_2(z)dudu'dz.
\end{equation}
Now, the term containing $A(u)$ is equal to
\begin{equation}\label{perm1}
\frac{1}{\e}\int_{\R^{2n}}(A\psi^\e_w)\ast
G^{(n)}_{\lambda\e^2}(u'+\e z)
\overline{\psi^\e_w(u')}\hat{\phi}_2(z)du'dz
\end{equation}
and the term containing $A(u')$ is equal to
\begin{equation}\label{perm2}
\frac{1}{\e}\int_{\R^{2n}}A(u')\psi^\e_w\ast
G^{(n)}_{\lambda\e^2}(u'+\e z)
\overline{\psi_w^\e(u')}\hat{\phi}_2(z)du'dz.
\end{equation}
Now, subtract \eqref{perm2} from \eqref{perm1} to get that
\eqref{perm0} equals $R_{w,1}^\e+R_{w,2}^{\e}$, where
$$
R_{w,1}^\e:=\frac{1}{\e}\int_{\R^{2n}}\bigl[(A\psi^\e_w)\ast
G^{(n)}_{\lambda\e^2}(u'+\e z)-A(u'+\e z)\psi^\e_w\ast
G^{(n)}_{\lambda\e^2}(u'+\e z)\bigr]
\overline{\psi}^\e_w(u')\hat{\phi}_2(z)du'dz
$$
and
$$
R_{w,2}^\e:=\frac{1}{\e}\int_{\R^{2n}}[A(u'+\e
z)-A(u')]\psi^\e_w\ast G^{(n)}_{\lambda\e^2}(u'+\e z)
\overline{\psi_w^\e(u')}\hat{\phi}_2(z)du'dz.
$$
Thus, the apriori estimate on the expression in \eqref{defI} can be
achieved by estimating the integrals of the error terms $R^\e_{w,i}$
w.r.t. $w$.

Writing $R_{w,1}^\e$ in the form
$$
\int_{\R^n}\hat\phi_2(z)\int_{\R^n\times\R^n} \frac{A(u'+\e
z-u)-A(u'+\e z)}{\e}
G^{(n)}_{\lambda\e^2}(u)\overline{\psi^\e_w(u')}\psi^\e_w(u'+\e
z-u)\,dudu'dz
$$
we can estimate from above $\int_W |R_{w,1}^\e|\,d\P(w)$ by
$$
\|\nabla
A\|_\infty\int_W\int_{\R^n}|\hat\phi_2|(z)\int_{\R^n\times\R^n}
\frac{|u|}{\e}G^{(n)}_{\lambda\e^2}(u)|\psi^\e_w|(u')|\psi^\e_w|(u'+\e
z-u)\,dudu'dzd\P(w)
$$
and then by
$$
\sqrt{\lambda}\|\nabla
A\|_\infty\int_W\int_{\R^n}|\hat{\phi}_2|(z)\int_{\R^n\times\R^n}
\eta_\e(u)|\psi^\e_w|(u')|\psi^\e_w|(u'+\e z-u)\,dudu'dzd\P(w)
$$
where $\eta_\e(u):=G^{(n)}_{\lambda\e^2}(u)|u|/(\sqrt{\lambda}\e)$
is a family of convolution kernels uniformly bounded in $L^1$ by
$\int|u|G^{(n)}_1(u)\,du$. Using the convolution estimate
$\|a\ast\eta_\e\|_2\leq\|a\|_2\|\eta_\e\|_1$ we can finally bound
this term with $\sqrt{\lambda}\|\nabla
A\|_\infty\|\hat\phi_2\|_1\int|u|G^{(n)}_1(u)\,du$.

We can estimate from above $\int_W |R_{w,2}^\e|\,d\P(w)$ using
\eqref{meanbetter1} to get
$$
\sqrt{C(\lambda)}\int_{\R^n}|\hat{\phi}_2|(z)\int_{\R^n}\frac{|A(u'+\e
z)-A(u')|}{\e}\sqrt{\int_W|\psi^\e_w|^2(u')\,d\P(w)}\,du'dz.
$$
Then we can use the standard $L^2$ estimate on difference quotients
of $W^{1,2}$ functions to bound this last expression with
$$
\sqrt{C(\lambda)}\|\nabla A\|_2\int_{\R^n}|z||\hat{\phi}_2|(z)\,dz.
$$
This completes the estimate of the term in \eqref{defI} and the
proof.
\end{proof}

\subsection{Estimates and convergence of $\Errordelta{U_s}{\psi}$}

In the case of the Coulomb potential we follow a specific argument
borrowed from \cite[proof of Theorem~1.1(ii)]{amfrja}), based on the
inequality
\begin{equation}\label{euclid}
\biggl|\frac{1}{|z+w/2|}-\frac{1}{|z-w/2|}\biggr| \leq
\frac{|w|}{|z+w/2||z-w/2|}
\end{equation}
with $z=(x_i-x_j)\in\R^3$, $w=\e(y_i-y_j)\in\R^3$. By estimating the
difference quotients of $U_s$ as in \eqref{euclid} we obtain:
\begin{equation}\label{secondaerror}
\biggl|\int_{\R^d}\Errordelta{U_{s}}{\psi}\phi\,dx\,dp\biggr|\leq
C_*\int_{\R^n}|y|\sup_{x'}|{\cal
F}_p\phi|(x',y)\,dy\int_{\R^n}U_{s}^2|\psi|^2\,dx,
\end{equation}
with $C_*$ depending only on the numbers $Z_i$ in \eqref{Coulomb}.

Now we can state the convergence of $\Errordelta{U_s}{\psi^\e}$; the
particular form of the statement, with convolution on $\phi$ on one
side and convolution on $W_\e\psi^\e$ on the other side (namely the
Husimi transform), is motivated by the goal we have in mind, namely
the fact that the Husimi transforms asymptotically satisfy the
Liouville equation.

\begin{theorem}[Convergence of error term, II]\label{extracoulomb}
Let $\psi^\e\in L^2(\R^n;\C)$ be unitary wavefunctions satisfying
\begin{equation}\label{meanbetter3}
\sup_{\e>0}\int_{\R^n}U_s^2|\psi^\e|^2\,dx<\infty.
\end{equation}
Then
\begin{equation}\label{erro_co2}
\lim_{\e\to 0}\int_{\R^{2n}}\Errordelta{U_s}{\psi^\e}\phi\ast
G^{(n)}_\e\,dxdp+\int_{\R^{2n}} \langle\nabla
U_s,\nabla_p\phi\rangle\tilde W_\e\psi^\e
\,dxdp=0\quad\forall\phi\in C^\infty_c(\R^{2n}\setminus
(S\times\R^n)\bigr).
\end{equation}
\end{theorem}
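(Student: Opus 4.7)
The plan is to mimic Theorem~\ref{extracanc}, isolating the Coulomb singularity of $U_s$ via a cutoff. Let $K := \pi_x(\supp\phi)$; by hypothesis $K$ is compact and disjoint from $S$. Fix $\delta_0 > 0$ with $B_{2\delta_0}(K) \cap S = \emptyset$, and choose $\chi \in C^\infty_c(\R^n \setminus S)$ with $\chi \equiv 1$ on $B_{2\delta_0}(K)$. Decompose $U_s = V + V^c$ with $V := \chi U_s \in C^\infty_c(\R^n)$ (smooth because $\supp \chi$ avoids $S$) and $V^c := (1-\chi)U_s$ vanishing on $B_{2\delta_0}(K)$. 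By linearity $\Errordelta{U_s}{\psi^\e} = \Errordelta{V}{\psi^\e} + \Errordelta{V^c}{\psi^\e}$, and the claim splits accordingly.

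For the smooth contribution: since $V \in C^2$, the identity preceding \eqref{Wigner_calc} gives $\Errordelta{V}{\psi^\e} = \nabla V(x)\cdot\nabla_p W_\e\psi^\e + \Error{V}{\psi^\e}$ with $\Error{V}{\psi^\e} \to 0$ in ${\cal A}'$ by the classical argument of \cite{lionspaul}. Combining this with $\|\phi \ast G^{(2n)}_\e - \phi\|_{\cal A} \to 0$ (the reasoning leading to \eqref{uniformly}), the estimate \eqref{primaerror}, integration by parts in $p$, and the analogous convergence of the Gaussian regularization of $\nabla V\cdot\nabla_p\phi \in C^\infty_c(\R^{2n})$ in ${\cal A}$, I obtain
\[
\int \Errordelta{V}{\psi^\e}\,(\phi \ast G^{(2n)}_\e)\,dxdp + \int \nabla V \cdot \nabla_p\phi \,\tilde W_\e\psi^\e\,dxdp \to 0.
\]
Since $\chi \equiv 1$ on a neighborhood of $K$, $\nabla V = \nabla U_s$ on $K$; as $\nabla_p\phi$ vanishes off $K \times \R^n$, $\nabla V\cdot\nabla_p\phi = \nabla U_s\cdot\nabla_p\phi$ pointwise on $\R^{2n}$.

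The hard part is proving $I^c_\e := \int \Errordelta{V^c}{\psi^\e}(\phi\ast G^{(2n)}_\e)\,dxdp \to 0$. Expanding the Fourier representation of $\Errordelta{V^c}{\psi^\e}$ and carrying out the $p$-integral (using ${\cal F}G^{(n)}_\e(y) = e^{-\e|y|^2/4}$) yields
\[
I^c_\e = -\frac{i}{(2\pi)^n}\int\int K^\e(x',y')\,\Phi^\e(x',y')\,e^{-\e|y'|^2/4}\,dy'\,dx',
\]
where $K^\e(x',y') := \frac{V^c(x'+\frac{\e}{2}y')-V^c(x'-\frac{\e}{2}y')}{\e}\,\psi^\e(x'+\tfrac{\e}{2}y')\overline{\psi^\e(x'-\tfrac{\e}{2}y')}$ and $\Phi^\e(x',y') := \bigl({\cal F}_p\phi(\cdot,y') \ast_{x'} G^{(n)}_\e\bigr)(x')$. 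A variant of the argument leading to \eqref{secondaerror} (using \eqref{euclid} for $U_s$ together with Lipschitz control of $\chi$, Cauchy--Schwarz in $x'$, and the a priori bound \eqref{meanbetter3}) gives $\int|K^\e(x',y')|\,dx' \leq C|y'|$. Split $\R^n = A_1 \cup A_2$ with $A_1 := B_{\delta_0}(K)$. On $A_1$, $V^c$ vanishes on $B_{2\delta_0}(K)$, so $K^\e(x',y')$ is zero unless $|y'| > 2\delta_0/\e$, in which case $e^{-\e|y'|^2/4} \leq e^{-\delta_0^2/\e}$. On $A_2$, the $x$-support of ${\cal F}_p\phi(\cdot,y')$ in $K$ together with the Gaussian tail of $G^{(n)}_\e$ forces $|\Phi^\e(x',y')| \leq C_N(1+|y'|)^{-N}e^{-c\delta_0^2/\e}$ for every $N$. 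Combined with the Schwartz decay of ${\cal F}_p\phi$, both regions contribute $O(e^{-c'/\e})$, and $I^c_\e \to 0$.

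Combining the two pieces proves \eqref{erro_co2}. The main obstacle is the bound on $I^c_\e$: marrying the Gaussian smallness extracted from the spatial decomposition with the singular difference quotient of $V^c$ requires the specifically Coulombic inequality \eqref{euclid} and the $L^2$-control $\|U_s\psi^\e\|_2 \leq C$.
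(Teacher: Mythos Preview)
Your proof is correct, but it takes a genuinely different route from the paper's.

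The paper proceeds more directly: it first strips away both Gaussian convolutions (using \eqref{uniformly} to pass from $\tilde W_\e\psi^\e$ to $W_\e\psi^\e$, and \eqref{secondaerror} together with \eqref{meanbetter3} to pass from $\phi\ast G^{(2n)}_\e$ to $\phi$), reducing to
\[
\lim_{\e\to 0}\int_{\R^{2n}}\Errordelta{U_s}{\psi^\e}\phi\,dxdp+\int_{\R^{2n}}\langle\nabla U_s,\nabla_p\phi\rangle W_\e\psi^\e\,dxdp=0.
\]
Then it splits only the $y$-integral: on $\{\sqrt{\e}|y|>1\}$ the estimate \eqref{secondaerror} gives a contribution dominated by the tail $\int_{\{\sqrt{\e}|y|>1\}}|y|\sup_{x}|{\cal F}_p\phi|(x,y)\,dy\to 0$; on $\{\sqrt{\e}|y|\leq 1\}$ the difference quotient of $U_s$ converges uniformly to $\langle\nabla U_s(x),y\rangle$ because the $x$-support of $\phi$ stays in a compact set disjoint from $S$. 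No cutoff of $U_s$ is introduced.

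Your approach instead localizes the potential, writing $U_s=V+V^c$ with $V=\chi U_s\in C^\infty_c$ and $V^c$ vanishing near $K$. The smooth piece is then handled as a black box via the classical $C^2$ argument of \cite{lionspaul}, while the singular piece $V^c$ is killed by the Gaussian tails of $G^{(2n)}_\e$ on the two regions $A_1$, $A_2$. This is slightly heavier in bookkeeping (you must carry the convolutions through the $V^c$ estimate, and the bound $\int|K^\e(x',y')|\,dx'\leq C|y'|$ requires both \eqref{euclid} for the $(1-\chi)(U_s(a)-U_s(b))$ term and the Lipschitz bound on $\chi$ combined with $\|U_s\psi^\e\|_2<\infty$ for the $(\chi(b)-\chi(a))U_s(b)$ term). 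On the other hand it is more modular: the smooth-potential case is quoted rather than redone, and the singularity is handled by a pure support/decay argument. Both proofs ultimately rest on exactly the same two ingredients, the Coulomb inequality \eqref{euclid} and the a priori bound \eqref{meanbetter3}.
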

\begin{proof} First of all, we see that we can apply
\eqref{uniformly} with $\varphi=\langle\nabla
U_s,\nabla_p\phi\rangle$ to replace the integrals $\int_{\R^{2n}}
\langle\nabla U_s,\nabla_p\phi\rangle\tilde W_\e\psi^\e\,dxdp$ with
$\int_{\R^{2n}} \langle\nabla U_s,\nabla_p\phi\rangle
W_\e\psi^\e\,dxdp$ in the verification of \eqref{erro_co2}.
Analogously, using \eqref{meanbetter3} and \eqref{secondaerror} we
see that we can replace
$\int_{\R^{2n}}\Errordelta{U_s}{\psi^\e}\phi\ast G^{(n)}_\e\,dxdp$
with $\int_{\R^{2n}}\Errordelta{U_s}{\psi^\e}\phi\,dxdp$. Thus, we
are led to show the convergence
\begin{equation}\label{erro_co3}
\lim_{\e\to
0}\int_{\R^{2n}}\Errordelta{U_s}{\psi^\e}\phi\,dxdp+\int_{\R^{2n}}
\langle\nabla U_s,\nabla_p\phi\rangle W_\e\psi^\e
\,dxdp=0\quad\forall\phi\in C^\infty_c(\R^{2n}\setminus
(S\times\R^n)\bigr).
\end{equation}
Since
$$
\int_{\R^{2n}}\Errordelta{U_s}{\psi^\e}\phi\,dxdp=
-\frac{i}{(2\pi)^n}\int_{\R^{2n}}
\frac{U_s(x+\frac{\e}{2}y)-U_s(x-\frac{\e}{2}y)}{\e}
\psi^\e(x+\frac{\e}{2}y)\overline{\psi(x-\frac{\e}{2}y)}{\cal
F}_p\phi(x,y)\,dxdy
$$
we can split the region of integration in two parts, where
$\sqrt{\e}|y|>1$ and where $\sqrt{\e}|y|\leq 1$. The contribution of
the first region can be estimated as in \eqref{secondaerror}, with
$$
C_*\int_{\{\sqrt{\e}|y|>1\}}|y|\sup_{x'}|{\cal
F}_p\phi|(x',y)\,dy\int_{\R^n}U_{s}^2|\psi^\e|^2\,dx,
$$
which is infinitesimal, using \eqref{meanbetter3} again, as $\e\to
0$. Since
$$
\frac{U_s(x+\frac{\e}{2}y)-U_s(x-\frac{\e}{2}y)}{\e}\to\langle\nabla
U_s(x),y\rangle $$ uniformly as $\sqrt{\e}|y|\leq 1$ and $x$ belongs
to a compact subset of $\R^n\setminus S$, the contribution of the
second part is the same as that of
$$
-\frac{i}{(2\pi)^n} \int_{\R^{2n}} \langle\nabla U_s(x),y\rangle
\psi^\e(x+\frac{\e}{2}y)\overline{\psi(x-\frac{\e}{2}y)}{\cal
F}_p\phi(x,y)\,dxdy
$$
which coincides with
$$
-\int_{\R^{2n}}\langle\nabla U_s,\nabla_p\phi\rangle
W_\e\psi^\e(x,p)\,dxdp.
$$
\end{proof}

\section{$L^\infty$-estimates on averages of $\psi$}\label{sHusimi}

In this section we consider a family of solutions $\psi^\e_{t,w}$ to
the Schr\"odinger equation \eqref{eq:Sch} indexed by a parameter
$w$, and derive new estimates on their averages. In particular we
obtain pointwise upper bounds on Husimi transforms.

One of the main advantages of the Husimi transform is that it is
non-negative: indeed, with the change of variables \eqref{change}
and simple computations (see \cite{lionspaul} for more details), it
can be written as
\begin{equation}\label{rapptildeW}
\tilde
W_\e\psi(y,p)=\frac1{(2\pi)^n}\<\rho^\psi\phi^\e_{y,p},\phi^\e_{y,p}\>
=\frac1{(2\pi)^n}|\<\psi,\phi^\e_{y,p}\>|^2,
\end{equation}
where $\<\cdot,\cdot\>$ is the scalar product on $L^2(\R^n;\C)$,
\begin{equation}\label{phiep}
\phi^\e_{y,p}(x):=\frac{1}{\e^{n/2}}\frac{1}{(\pi\e)^{n/4}}
e^{-|x-y|^2/(2\e)}e^{i(p\cdot x)/\e}\in L^2(\R^n;\C),
\end{equation}
and $\rho^\psi:L^2(\R^n;\C)\to L^2(\R^n;\C)$ is the orthogonal
projector onto $\psi\in L^2(\R^n;\C)$:
$$
[\rho^\psi \phi](x):= \left(\int_{\R^n} \phi(x') \ov{\psi(x')}
 \,dx'\right)\psi(x).
$$

\begin{proposition}[$L^\infty$ estimates]\label{ppaul}
Let $\psi^\e_w\in L^2(\R^n;\C)$ be satisfying the operator
inequalities
$$
\frac{1}{\e^n}\int_W \rho^{\psi^\e_w}\,d\P(w)\leq C{\rm
Id}\qquad\forall\e>0.
$$
Then:
\begin{itemize}
\item[(a)] for all $y\in\R^n$ and $\e,\,\lambda>0$ we have
$$
\int_W|\psi_w^\e\ast G^{(n)}_{2\lambda\e^2}|^2(y)\,d\P(w)\leq
\frac{C}{\lambda^{n/2}};
$$
\item[(b)] for all $(y,p)\in\R^{2n}$ and $\e>0$ we have
$$
\int_W\tilde W_\e\psi_w^\e(y,p)\,d\P(w)\leq C.
$$
\end{itemize}
\end{proposition}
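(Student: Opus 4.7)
The plan is to rewrite both quantities as quadratic forms in the averaged projection operator $T_\e := \frac{1}{\e^n}\int_W \rho^{\psi^\e_w}\,d\P(w)$, and then use the hypothesis $T_\e \leq C\,\mathrm{Id}$. The key observation is that the operator-valued integral is defined via its quadratic form: for any $\phi \in L^2(\R^n;\C)$,
$$
\langle T_\e \phi, \phi\rangle = \frac{1}{\e^n}\int_W \langle \rho^{\psi^\e_w}\phi, \phi\rangle\,d\P(w) = \frac{1}{\e^n}\int_W |\langle \phi,\psi^\e_w\rangle|^2\,d\P(w),
$$
so the assumption is equivalent to $\int_W |\langle \phi,\psi^\e_w\rangle|^2\,d\P(w) \leq C\e^n\|\phi\|_{L^2}^2$ for every $\phi \in L^2$. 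The two statements then reduce to choosing the appropriate test function $\phi$ and computing a Gaussian $L^2$-norm.

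For (b), I would just apply the hypothesis with $\phi = \phi^\e_{y,p}$, using the representation \eqref{rapptildeW}:
$$
\int_W \tilde W_\e\psi^\e_w(y,p)\,d\P(w) = \frac{1}{(2\pi)^n}\int_W |\langle \psi^\e_w,\phi^\e_{y,p}\rangle|^2\,d\P(w) \leq \frac{C\e^n}{(2\pi)^n}\|\phi^\e_{y,p}\|_{L^2}^2.
$$
From \eqref{phiep}, a direct Gaussian computation gives $\|\phi^\e_{y,p}\|_{L^2}^2 = \e^{-n}$ independently of $(y,p)$, so the bound becomes $C/(2\pi)^n$, i.e.\ a constant independent of $\e$, $y$, $p$.

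For (a), the point is that since $G^{(n)}_{2\lambda\e^2}$ is real,
$$
\psi^\e_w \ast G^{(n)}_{2\lambda\e^2}(y) = \int_{\R^n}\psi^\e_w(x)\,G^{(n)}_{2\lambda\e^2}(y-x)\,dx = \langle \psi^\e_w, G^{(n)}_{2\lambda\e^2}(y-\cdot)\rangle.
$$
Applying the hypothesis with $\phi = G^{(n)}_{2\lambda\e^2}(y-\cdot)$ (whose $L^2$ norm is translation invariant) yields
$$
\int_W |\psi^\e_w \ast G^{(n)}_{2\lambda\e^2}|^2(y)\,d\P(w) \leq C\e^n \|G^{(n)}_{2\lambda\e^2}\|_{L^2}^2.
$$
Using the definition $G^{(n)}_\sigma(x) = (\pi\sigma)^{-n/2}e^{-|x|^2/\sigma}$ from \eqref{heatK}, the Gaussian $L^2$-norm is $\|G^{(n)}_\sigma\|_{L^2}^2 = (2\pi\sigma)^{-n/2}2^{-n/2}\cdot(\text{powers of }\pi)$, which with $\sigma = 2\lambda\e^2$ gives $(4\pi\lambda\e^2)^{-n/2}$, so the factors of $\e^n$ cancel and one is left with $C/\lambda^{n/2}$ (up to an $n$-dependent constant absorbed in $C$). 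The factor $2$ in $2\lambda\e^2$ is precisely what is needed to make this Gaussian $L^2$-norm scale like $\e^{-n}\lambda^{-n/2}$, matching the $\e^n$ from the hypothesis.

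I don't anticipate any real obstacle: the proof is essentially a one-line application of the hypothesis together with a bookkeeping of the Gaussian normalizations, and the only mildly delicate point is keeping straight the two conventions for Gaussians (the $\sigma=2\lambda\e^2$ in the statement versus the $\e$ appearing in $\phi^\e_{y,p}$), which is what motivates the particular factor of $2$ in the variance of the convolution kernel in (a).
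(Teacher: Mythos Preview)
Your proposal is correct and takes essentially the same approach as the paper: both parts are obtained by testing the operator inequality $\e^{-n}\int_W\rho^{\psi^\e_w}\,d\P(w)\le C\,\mathrm{Id}$ against the translated Gaussian $G^{(n)}_{2\lambda\e^2}(y-\cdot)$ for (a) and against $\phi^\e_{y,p}$ for (b), and then computing the relevant Gaussian $L^2$-norms. One small remark: your parenthetical about ``an $n$-dependent constant absorbed in $C$'' is unnecessary, since the explicit constants you obtain, $C/(4\pi\lambda)^{n/2}$ in (a) and $C/(2\pi)^n$ in (b), are already dominated by $C/\lambda^{n/2}$ and $C$ respectively.
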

\begin{proof} The proof of (a) follows by applying the uniform operator
inequality to the functions $(2\e)^{n/2}(\pi\lambda)^{n/4}
G^{(n)}_{2\lambda\e^2}(\cdot-y)$, whose $L^2$ norm is 1, to get
$$
\e^n\lambda^{n/2}\int_W |\psi^\e_w\ast
G^{(n)}_{2\lambda\e^2}|^2(y)\,d\P(w)\leq C\e^n.
$$
The proof of (b) is analogous, it is based on \eqref{rapptildeW} and
on the insertion of the functions $\phi^\e_{y,p}$ in \eqref{phiep}
in the operator inequality, taking into account that
$\|\phi^\e_{y,p}\|_2=\e^{-n/2}$.
\end{proof}

The assumption made in Proposition~\ref{ppaul} is compatible with
the families of wavefunctions given in \eqref{alphaless1}, i.e.
\begin{equation}\label{alpha1}
\psi^\e_w(x)=\e^{-n\alpha/2}\phi_0\Bigl(\frac{x-x_0}{\e^\alpha}\Bigr)e^{i(x\cdot
p_0)/\e}\qquad \phi_0\in C^2_c(\R^n),\qquad 0<\alpha<1
\end{equation}
with $w=(x_0,p_0)$. Indeed, in this case one can choose $W=\R^{2n}$
with the Borel $\sigma$-algebra and $\P=\rho\Leb{2n}$, with $\rho\in
L^1\cap L^\infty$, see \cite{figpaul} for details. In the extreme
case $\alpha=1$ no average w.r.t. $p_0$ is needed and one can fix it
and choose $W=\R^n$, obtaining convergence for almost all $x_0$, so
to speak. The other extreme case $\alpha=0$, corresponding to
concentration in momentum, is analogous.

\section{Main convergence result}\label{smain}

In this section we combine the theory developed in
Sections~\ref{notation}--\ref{sgoodliou} with the estimates of the
Section~\ref{estimates} and Section~\ref{sHusimi}, to obtain
convergence of the Wigner/Husimi transforms of solutions to
\eqref{eq:Sch}. In particular we shall apply Theorem~\ref{tstable}.

We consider the assumptions on $U$ stated in
Section~\ref{assumptionsonU} and ``random'' initial data
$\psi^\e_{0,w}\in H^2(\R^n;\C)$ with unit $L^2$ norm in
\eqref{eq:Sch} indexed by $w\in W$, where $(W,{\cal F},\P)$ is a
suitable probability space. Denoting by $\psi^\e_{t,w}$ the
corresponding Schr\"odinger evolutions, the basic assumptions we
need for the initial data are
\begin{equation}\label{assum1}
\sup_{\e>0}\int_W\int_{\R^n}|H_\e\psi^\e_{0,w}|^2\,dx\,d\P(w)<\infty,\qquad
\lim_{R\uparrow\infty}\sup_{\e>0}\int_W\int_{\R^n\setminus
B_R}|\psi^\e_{0,w}|^2\,dx\,d\P(w)=0;
\end{equation}
\begin{equation}\label{assum2}
\frac{1}{\e^n}\int_W\rho^{\psi^\e_{0,w}}\,d\P(w)\leq C{\rm Id}\quad
\text{with $C$ independent of $\e$};
\end{equation}
\begin{equation}\label{assum3}
i(w):=\lim_{\e\downarrow 0}\tilde W_\e\psi^\e_{0,w}\Leb{d}
\quad\text{exists in $\Probabilities{\R^d}$ for $\P$-a.e. $w\in W$.}
\end{equation}
As we discussed in Section~\ref{sHusimi}, \eqref{assum1},
\eqref{assum2}, \eqref{assum3} are compatible with several natural
families of initial conditions, for instance those described at the
end of the introduction, see \eqref{alphaless1} or \eqref{alpha1}.
In addition, the unitary character of the Schr\"odinger evolution
immediately gives
\begin{equation}\label{assum2bis}
\frac{1}{\e^n}\int_W\rho^{\psi^\e_{t,w}}\,d\P(w)\leq C{\rm Id}
\qquad\forall \e>0,\,\,t\geq 0,
\end{equation}
where $C$ is the same constant as in \eqref{assum2}.

We state \eqref{final} below using the Husimi transforms, but
\eqref{uniformly} can be used to show convergence of Wigner
transforms, in the form used in \eqref{limeps} in the introduction.

\begin{theorem}\label{tmain2} For $U$ as in Section \ref{assumptionsonU}, and
under assumptions \eqref{assum1}, \eqref{assum2}, \eqref{assum3}, we
have
\begin{equation}\label{final}
\lim_{\e\to 0} \int_W\sup_{t\in [-T,T]}d_{{\mathscr
P}}\bigl(\tilde{W}_\e\psi^\e_{t,w},\mmu(t,i(w))\bigr) \,d\P(w)=0,
\end{equation}
for all $T>0$, where
$\nnu=i_\sharp\P\in\Probabilities{\Probabilities{\R^{2n}}}$ and
$\mmu(t,\mu)$ is the $\nnu$-RLF in \eqref{svitla}.
\end{theorem}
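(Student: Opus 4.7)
The plan is to apply Theorem~\ref{tstable} along any sequence $\e_n\downarrow 0$, with $i_n(w):=\tilde W_{\e_n}\psi^{\e_n}_{0,w}$ and $\mmu_n(t,i_n(w)):=\tilde W_{\e_n}\psi^{\e_n}_{t,w}$. The well-posedness of \eqref{contieqw} in $L_+^\infty([0,T];L^1\cap L^\infty(\R^{2n}))$ is Theorem~\ref{tmain3}, the existence and uniqueness of the $\nnu$-RLF together with the explicit representation \eqref{svitla} come from Theorem~\ref{tmain1}, and $i_n\to i$ $\P$-a.e.\ is exactly \eqref{assum3}. Once \eqref{cetraro1} is established on $[0,T]$, the corresponding statement on $[-T,0]$ follows by the same argument applied to the time-reversed family $\overline{\psi^{\e}_{-t,w}}$, which still solves \eqref{eq:Sch} and whose Husimi transforms are the $p\mapsto-p$ pushforwards of the originals, consistently with the Liouville flow.

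Three of the five hypotheses of Theorem~\ref{tstable} follow rather directly from the ingredients assembled in Sections~\ref{estimates}--\ref{sHusimi}. The asymptotic regularity (i) is immediate from the pointwise bound $\int_W\tilde W_\e\psi^\e_{t,w}(y,p)\,d\P(w)\leq C$ of Proposition~\ref{ppaul}(b), which applies thanks to the propagated operator inequality \eqref{assum2bis}. The space tightness (iii) follows from the density identity \eqref{compamarginals1} combined with \eqref{alltight} (controlling the mass at large $|x|$) and from \eqref{compamarginals2} combined with energy conservation \eqref{amfrja1} (controlling the mass at large $|p|$), each averaged in $w$ via \eqref{assum1} and Markov's inequality. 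For the uniform decay near $S$ in (ii), Fubini and \eqref{compamarginals1} reduce the integrand to $\int(\operatorname{dist}^\beta(x,S)+\delta)^{-1}(|\psi^\e_{t,w}|^2\ast G_\e^{(n)})(x)\,dx$; choosing $\beta\in(1,2)$, \eqref{lowerU} bounds the reciprocal of the distance by a constant multiple of $U_s^\beta$, and H\"older applied to the a priori bound \eqref{amfrja3} (together with \eqref{assum1}) yields the required finiteness, the convolution by $G_\e^{(n)}$ being absorbed by local $L^1$ estimates on $U_s^\beta$. The time tightness (iv) is obtained by differentiating $t\mapsto\int\phi\,d\tilde W_\e\psi^\e_{t,w}$ via the Husimi PDE \eqref{pdehusimi}: the transport term is controlled by \eqref{amfrja1}, the error term by \eqref{primaerror}--\eqref{secondaerror} combined with \eqref{amfrja3}, and the remaining term carries a $\sqrt\e$ factor.

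The main obstacle is the limit continuity equation (v). Testing \eqref{pdehusimi} against $\varphi(t)\phi(x,p)$ with $\phi\in C^\infty_c(\R^{2n}\setminus(S\times\R^n))$ and $\varphi\in C^\infty_c(0,T)$, integrating by parts in $t$ and $x$, and using $\bb=(p,-\nabla U)$, the expression inside the $L^1(\P)$-norm in \eqref{ali6} becomes, up to an $O(\sqrt\e)$ contribution from the last term of \eqref{pdehusimi},
\[
-\int_0^T\varphi(t)\int\bigl(\phi\ast G_\e^{(2n)}\bigr)\Errordelta{U}{\psi^\e_{t,w}}\,dx\,dp\,dt-\int_0^T\varphi(t)\int\langle\nabla U,\nabla_p\phi\rangle\tilde W_\e\psi^\e_{t,w}\,dx\,dp\,dt.
\]
Splitting $U=U_b+U_s$, I apply Theorem~\ref{extracanc} in the averaged form \eqref{extracancbis} with $V=U_b$, whose hypotheses \eqref{meanbetter2} and \eqref{meanbetter1} are precisely Proposition~\ref{ppaul}(b) and (a) via \eqref{assum2bis}, and Theorem~\ref{extracoulomb} with $V=U_s$, whose hypothesis \eqref{meanbetter3} is provided by \eqref{amfrja3}, applied $w$-pointwise and then integrated in $w$ by dominated convergence (the uniform $L^1(\P dt)$-domination coming from \eqref{secondaerror}, \eqref{amfrja3} and \eqref{assum1}). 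The support condition on $\phi$ away from $S\times\R^n$ is essential for Theorem~\ref{extracoulomb}. This verifies (v); Theorem~\ref{tstable} then yields \eqref{cetraro1} on $[0,T]$, and the time-reversal observation above completes the proof.
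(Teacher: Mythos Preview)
Your proof follows the paper's strategy closely and the verification of hypotheses (i), (iii), (iv), (v) of Theorem~\ref{tstable} is essentially the same as the paper's, with only cosmetic differences (e.g.\ you differentiate via the Husimi PDE \eqref{pdehusimi} in (iv), while the paper passes to $\phi_\e=\phi\ast G_\e^{(2n)}$ and uses the Wigner PDE \eqref{Wigner_calc_in}; both work).

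There is, however, one genuine missing link. Theorem~\ref{tstable} requires, besides (a)--(b), condition (c) on $\bb$: that the discontinuity set $\Sigma$ of $\cc=-\nabla U$ is $\Leb{n}$-negligible. Under the assumptions of Section~\ref{assumptionsonU} you only know $\nabla U_b\in BV_{\rm loc}$, which does not by itself guarantee that a Borel representative of $\nabla U_b$ is continuous outside a null set. The paper handles this not by verifying (c) but by invoking Remark~\ref{rven}: since Proposition~\ref{ppaul}(b) gives the \emph{uniform} (not merely asymptotic) bound $\int_W\tilde W_\e\psi^\e_{t,w}\,d\P\le C$, the stronger form of hypothesis (i) in Remark~\ref{rven} holds, and that remark dispenses with (c). You have verified exactly this uniform bound, so the fix is a one-line citation of Remark~\ref{rven}; but without it your appeal to Theorem~\ref{tstable} is formally incomplete.

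A smaller point: your treatment of (ii) with $\beta\in(1,2)$ and H\"older is viable, but the phrase ``the convolution by $G_\e^{(n)}$ being absorbed by local $L^1$ estimates on $U_s^\beta$'' hides a nontrivial pointwise estimate (one needs $(U_s^\beta\chi_{B_R})\ast G_\e^{(n)}\le C\,U_s^\beta+C$ uniformly in $\e$, which follows from a scaling computation of $|z|^{-\beta}\ast G_\e^{(3)}$ transverse to each $S_{ij}$, not from local integrability alone). The paper's route is cleaner: take $\beta=2$, keep $\delta>0$ fixed so that $({\rm dist}^2+\delta)^{-1}$ is bounded and its convolution with $G_\e^{(n)}$ converges uniformly, pass to the $\limsup_\e$ first, and only then use \eqref{lowerU} and \eqref{amfrja3} to get a bound independent of $\delta$.
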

\begin{proof} Our goal is to apply Theorem~\ref{tstable}
(with a continuous parameter $\e$) and Remark~\ref{rven} with
$i_\e(w):=\tilde W_\e\psi^\e_{0,w}\Leb{2n}$ and
$\mmu_\e(t,i_\e(w))=\tilde W_\e\psi^\e_{t,w}\Leb{2n}$. The
convergence \eqref{final} will be a direct consequence of
\eqref{cetraro1}. We shall work in the time interval $[0,T]$, the
proof in the time interval $[-T,0]$ being the same, up to a time
reversal. First of all we notice that \eqref{assum1} and
\eqref{amfrja2} give
\begin{equation}\label{assum4}
\sup_{\e>0}\sup_{t\in\R}\int_W\int_{\R^n}|H_\e\psi^\e_{t,w}|^2\,dx\,d\P(w)<\infty.
\end{equation}
In particular, by an integration by parts, we have also
\begin{equation}\label{assum5}
\sup_{\e>0}\sup_{t\in\R}\int_W\int_{\R^n}|\e\nabla\psi^\e_{t,w}|^2\,dx\,d\P(w)<\infty.
\end{equation}

\noindent (i) (asymptotic regularity). By \eqref{assum2bis} and
Proposition~\ref{ppaul}(b) we have the uniform estimate (in $\e$,
$t$ and $(x,p)$)
\begin{equation}\label{unreg}
\int_W\tilde W_\e\psi^\e_{t,w}(x,p)\,d\P(w)\leq C.
\end{equation}
In particular we have uniform and not only asymptotic regularity,
therefore Remark~\ref{rven} applies.

\item[(ii)] (uniform decay away from the singularity). We check
\eqref{ali4} with $\beta=2$ and $S$ equal to the singular set of
$U_{s}$, namely
\begin{equation}\label{ali44}
\sup_{\delta>0}\limsup_{\e\to
0}\int_W\int_0^T\int_{B_R}\frac{1}{{\rm dist}^2(x,S)+\delta}\tilde
W_\e\psi^\e_{t,w}\,dx\,dp\,dt \,d\P(w)<\infty.
\end{equation}
We use \eqref{compamarginals1} and the inequality
$$
\frac{1}{{\rm dist}^2(x,S)+\delta}\ast G^{(n)}_\e\leq \frac{1}{{\rm
dist}^2(x,S)},
$$
which holds in $B_R$ for $\e<\e(\delta,R)$ to deduce \eqref{ali44}
from
\begin{equation}\label{ali4444}
\limsup_{\e\to 0}\int_W\int_0^T\int_{\R^n}\frac{1}{{\rm
dist}^2(x,S)}|\psi^\e_{t,w}|^2\,dx\,dt \,d\P(w)<\infty.
\end{equation}
In turn, this inequality follows by \eqref{amfrja3} and
\eqref{lowerU}, taking \eqref{assum1} into account.

\noindent (iii) (space tightness). We have to check that for all
$\delta>0$ it holds:
$$
\lim_{R\to\infty} \P\biggl(\Bigl\{w\in W:\ \sup_{\e>0}\sup_{t\in
[0,T]}\int_{\R^{2n}\setminus B_R} \tilde{W}_\e\psi^\e_{t,w}\,dx\,dp
>\delta\Bigr\}\biggr)=0.
$$
Considering the cube $C_R$ containing $B_R$, this tightness property
can be checked separately for the first and the second marginals of
$\tilde{W}_\e\psi^\e_{t,w}$; using \eqref{compamarginals1},
\eqref{compamarginals2}, it is not hard to see that it suffices to
check the analogous property for the marginals of the corresponding
Wigner transforms; for the first marginals, tightness is a direct
consequence of \eqref{alltight} and \eqref{assum1}. For the second
marginals, we use \eqref{assum5} and the identity
\begin{equation}\label{tightpp}
\int_{\R^n\times\R^n}|p|^2 W_\e\psi\,dx\,dp=
\int_{\R^n}\biggl|\frac{1}{(2\pi\e)^{n/2}}\hat\psi(p/\e)\biggr|^2|p|^2\,dp=
\int_{\R^n}|\e\nabla\psi|^2\,dx
\end{equation}
with $\psi=\psi_{t,w}^\e$.

\noindent (iv) (time tightness). We need to show that for all
$\phi\in C_c^\infty(\R^{2n})$ it holds
$$
\lim_{M\uparrow\infty}\P \biggl(\Bigr\{w\in W:\
\int_0^T\biggl|\biggr(\int_{\R^{2n}}\phi \tilde
W_\e\psi^\e_{t,w}\,dx\,dp\biggr)'\biggr|\,dt>M\Bigr\}\biggr)=0;
$$
uniformly in $\e$. Equivalently, we can consider the limit
\begin{equation}\label{elena}
\lim_{M\uparrow\infty}\P\biggl(\Bigl\{w\in W:\
\int_0^T\biggl|\biggr(\int_{\R^{2n}}\phi_\e
W_\e\psi^\e_{t,w}\,dx\,dp\biggr)'\biggr|\,dt>M\Bigr\}\biggr)=0,
\end{equation}
where $\phi_\e=\phi\ast G_\e^{(2n)}$. According to
\eqref{Wigner_calc_in}, the time derivative in the formula above
consists of two terms, $\int\langle p,\nabla_x\phi_\e\rangle
W_\e\psi^\e_{t,w}\,dx\,dp$ and
$\int\Errordelta{U}{\psi^\e_{t,w}}\phi_\e\,dx\,dp$ and we need only
to show a property analogous to \eqref{elena} for these two terms.
Since $\phi \in C_c^\infty(\R^{2n})$, $\|\langle
p,\nabla_x\phi_\e\rangle\|_{{\cal A}}$ are easily seen to be
uniformly bounded, hence the first term can be estimated using
\eqref{lionspaul}. The second term can be estimated using
\eqref{primaerror} for $U_{b}$ and \eqref{secondaerror} for $U_{s}$,
taking \eqref{amfrja3} and \eqref{assum1} into account.

\noindent (v) (limit continuity equation). We have to show that
$$
\lim_{\e\downarrow 0}\int_W
\biggl|\int_0^T\biggl[\varphi'(t)\int_{\R^{2n}}\phi \tilde
W_\e\psi^\e_{t,w}\,dxdp+\varphi(t)\int_{\R^{2n}}
\langle\bb,\nabla\phi\rangle \tilde
W_\e\psi^\e_{t,w}\,dxdp\biggr]\,dt\biggr|\,d\P(w)=0
$$
for all $\phi\in C^\infty_c\bigl(\R^{2n}\setminus
(S\times\R^n)\bigr)$, $\varphi\in C^\infty_c(0,T)$. Taking
\eqref{pdehusimi} into account, this is implied by the validity of
the limits
\begin{equation}\label{limit1}
\lim_{\e\downarrow 0} \sup_{t\in [0,T]}\int_W \biggl|
\int_{\R^{2n}}\Errordelta{U}{\psi^\e_{t,w}}\phi\ast
G^{(2n)}_\e\,dxdp+\int_{\R^{2n}} \langle\nabla
U,\nabla_p\phi\rangle\tilde
W_\e\psi^\e_{t,w}\,dxdp\biggr|\,d\P(w)=0,
\end{equation}
\begin{equation}\label{limit2}
\lim_{\e\downarrow 0} \sqrt{\e}\int_W\int_0^T|\varphi(t)|
\biggl|\int_{\R^{2n}} \phi\nabla_x\cdot [W_\e\psi^\e_{t,w}\ast
\bar{G}_\e^{(2n)}]\,dxdp\biggr|\,dtd\P(w)=0.
\end{equation}

\noindent {\bf Verification of \eqref{limit1}.} We can consider
separately the contributions of $U_b$ and $U_s$. For the $U_b$
contribution we apply Theorem~\ref{extracanc}, in the form stated in
\eqref{extracancbis}; the assumptions \eqref{meanbetter2} and
\eqref{meanbetter1} of that theorem are fulfilled in view of
\eqref{assum2} and Proposition~\ref{ppaul}. For the $U_s$
contribution we apply \eqref{erro_co2} of
Theorem~\ref{extracoulomb}; the assumption \eqref{meanbetter3} of
that theorem is fulfilled in view of assumption \eqref{assum1} on
the initial data and \eqref{amfrja3}, ensuring propagation in time.

\noindent {\bf Verification of \eqref{limit2}.} This is easy, taking
into account the fact that $$\int_{\R^{2n}}\langle
W_\e\psi^\e_{t,w}\ast \bar{G}_\e^{(2n)},\nabla_x\phi\rangle\,dxdp=
-\int_{\R^{2n}} W_\e\psi^\e_{t,w} \nabla_x\cdot
[\phi\ast\bar{G}_\e^{(2n)}]\,dxdp$$ are uniformly bounded because
$\bar{G}_\e^{(2n)}$, defined in \eqref{pdehusimibis}, are uniformly
bounded in $L^1(\R^n)$.
\end{proof}


\begin{thebibliography}{99}

\bibitem{ambrosio} {\sc L.Ambrosio:} {\em Transport equation and Cauchy
problem for $BV$ vector fields.} Invent. Math., {\bf 158} (2004),
227--260.

\bibitem{cetraro} {\sc L.Ambrosio:} {\em Transport equation and Cauchy problem
for non-smooth vector fields.} Lecture Notes in Mathematics
``Calculus of Variations and Non-Linear Partial Differential
Equations'' (CIME Series, Cetraro, 2005) {\bf 1927}, B. Dacorogna,
P. Marcellini eds., 2--41, 2008.
%
%
%
\bibitem{bologna} {\sc L.Ambrosio, G.Crippa:} {\em
Existence, uniqueness, stability and differentiability properties of
the flow associated to weakly differentiable vector fields.} UMI
Lecture Notes, Springer, in press.


\bibitem{amgisa} {\sc L.Ambrosio, N.Gigli, G.Savar\'e:}
{\em Gradient flows in metric spaces and in the Wasserstein space of
probability measures.} Lectures in Mathematics, ETH Zurich,
Birkh\"auser, 2005.

\bibitem{amfiga} {\sc L.Ambrosio, A.Figalli}: {\em On flows associated to
Sobolev vector fields in Wiener spaces: an approach \`a la
DiPerna-Lions.} J. Funct. Anal., {\bf 256} (2009), 179-214.

\bibitem{amfiga2} {\sc L.Ambrosio, A.Figalli}: {\em
Almost everywhere well-posedness of continuity equations with
measure initial data.} Preprint, 2009, to appear in CRAS.

\bibitem{amfrja} {\sc L.Ambrosio, G.Friesecke, J.Giannoulis}:
{\em Passage from quantum to classical molecular dynamics in the
presence of Coulomb interactions.} Preprint, 2009, to appear in
Comm. PDE.

\bibitem{athanassoulis1}
{\sc A.Athanassoulis, N.Mauser, T.Paul:} {\em  Coarse-scale
representations and smoothed Wigner transforms.} Journal de
Math\'ematiques Pures et Appliqu\'ees, {\bf 91} (2009), 296-338.

\bibitem{athanassoulis3}
{\sc A.Athanassoulis, T.Paul:} {\em  Strong phase-space
semiclassical asymptotics.} arXiv:1002.1371v2.

\bibitem{athanassoulis2}
{\sc A.Athanassoulis, T.Paul:} {\em Regularization of certain
ill-posed semiclassical limits.} work in progress.


\bibitem{bogachevII} {\sc V.Bogachev:} {\em Measure Theory, Voll. I
and II.} Springer, 2007.

\bibitem{bouchut} {\sc F.Bouchut:}
{\em Renormalized solutions to the Vlasov equation with
coefficients of bounded variation.} Arch. Ration. Mech. Anal.,
{\bf 157} (2001), 75--90.

%

\bibitem{lions} {\sc R.J.DiPerna, P.L.Lions:}
{\em Ordinary differential equations, transport theory and Sobolev
spaces.} Invent. Math., {\bf 98} (1989), 511--547.

\bibitem{Fermanian1}
{\sc Fermanian-Kammerer, P.G\'erard:}
{\em Mesures semi-classiques et croisement de modes.}
Bull. Soc. Math. France, {\bf 130} (2002), 123--168.

\bibitem{Fermanian2}
{\sc Fermanian-Kammerer, P.G\'erard:}
{\em A Landau-Zener formula for non-degenerated involutive codimension 3 crossings.}
Ann. Henri Poincar\'e, {\bf 4} (2003), 513--552.

\bibitem{figalli} {\sc A.Figalli:}
{\em Existence and uniqueness of martingale solutions for SDEs with
rough or degenerate coefficients.} J. Funct. Anal., {\bf 254}
(2008), 109--153.

\bibitem{figpaul} {\sc A.Figalli, T.Paul:} work in progress.

\bibitem{Fr03} {\sc G.~Friesecke:}
{\em The Multiconfiguration Equations for Atoms and Molecules:
Charge quantization and existence of solutions.} Arch. Rat. Mech.
Analysis {\bf 169} (2003), 35--71.

\bibitem{gerard} {\sc P.G\'erard:} {\em Mesures semi-classiques et ondes
de Bloch.} Seminaire sur les \'Equations aux D\'eriv\'ees Partielles, 1990-1991. Exp. No.
XVI, 19 pp., \'Ecole Polytechnique, Palaiseau, 1991.

\bibitem{gerard3}
{\sc P.G\'erard, P.A.Markowich, N.J.Mauser, F.Poupaud:} {\em
Homogenization limits and Wigner transforms.} Comm. Pure Appl. Math.
{\bf 50} (1997), 323--379.

\bibitem{hauray1} {\sc M.Hauray:}
{\em On Liouville transport equation with potential in $BV_{\rm
loc}$.} Comm. Partial Differential Equations, {\bf 29} (2004),
207--217.

\bibitem{lionspaul} {\sc P.L.Lions, T.Paul:} {\em Sur les mesures de Wigner.}
Rev. Mat. Iberoamericana, {\bf 9} (1993), 553--618.

\bibitem{lions2} {\sc P.L.Lions:}
{\em Mathematical topics in fluid mechanics, Vol. I: incompressible
models.} Oxford Lecture Series in Mathematics and its applications,
{\bf 3} (1996), Oxford University Press.

\bibitem{lions3} {\sc P.L.Lions:}
{\em Mathematical topics in fluid mechanics, Vol. II: compressible
models.} Oxford Lecture Series in Mathematics and its applications,
{\bf 10} (1998), Oxford University Press.

\bibitem{lions1} {\sc P.L.Lions:}
{\em Sur les \'equations diff\'erentielles ordinaires et les
\'equations de transport.} C. R. Acad. Sci. Paris S\'er. I, {\bf
326} (1998), 833--838.

\bibitem{Martinez} {\sc A.Martinez:}
{\em An Introduction to Semiclassical and Microlocal Analysis.} Springer-Verlag, 2002

\bibitem{strvar} {\sc D.W.Stroock, S.R.S.Varadhan}
{\em Multidimensional diffusion processes.} Grundlehren der
Mathematischen Wissenschaften [Fundamental Principles of
Mathematical Sciences], 233. Springer-Verlag, Berlin-New York, 1979

\bibitem{Sturm1} {\sc K.T.Sturm, M.Von Renesse:} {\em Entropic measure and
Wasserstein diffusion.} Ann. Probability, to appear.

\bibitem{Sturm2} {\sc K.T.Sturm:} {\em Entropic measure on
multidimensional spaces.} arXiv: 0901.1815.



\bibitem{villani2}
{\sc C.Villani:} {\em Optimal transport: old and new.} Volume 338
of {\em Grundlehren der Mathematischen Wissenschaften [Fundamental
Principles of Mathematical Sciences]}. Springer, New York, 2009.


\end{thebibliography}
\end{document}